\documentclass[pdflatex,sn-mathphys-num]{sn-jnl}


\usepackage{graphicx}%
\usepackage{multirow}%
\usepackage{amsmath,amssymb,amsfonts}%
\usepackage{amsthm}%
\usepackage[title]{appendix}%
\usepackage{xcolor}%
\usepackage{textcomp}%
\usepackage{manyfoot}%
\usepackage{booktabs}%
\usepackage{algorithm}%
\usepackage{algorithmicx}%
\usepackage{algpseudocode}%
\usepackage{listings}%
\usepackage{upgreek}%
\usepackage{ifthen}%
\usepackage{mathscinet}%
\usepackage{enumitem}%
\usepackage{amssymb}%
\usepackage{mathrsfs}%
\usepackage{graphicx,colordvi,psfrag}%



\theoremstyle{thmstyleone}%
\newtheorem{theorem}{Theorem}
\newtheorem{proposition}[theorem]{Proposition}%

\theoremstyle{thmstyletwo}%
\newtheorem{example}{Example}%
\newtheorem{remark}{Remark}%
\newtheorem{corollary}{Corollary}
\theoremstyle{thmstylethree}%
\newtheorem{definition}{Definition}%
\newtheorem{lemma}{Lemma}
\raggedbottom

\newcommand{\R}{{\mathbb R}}
\newcommand{\N}{{\mathbb N}}
\renewcommand{\P}{\mathbb P}
\providecommand{\norm}[1]{\| #1 \|}
\renewcommand{\d}{\mathrm{d}}
\renewcommand{\div}{\mathrm{div}\,}

\newcommand{\n}{\mathrm{n}}
\renewcommand{\Re}{\mathrm{Re}}
\renewcommand{\Im}{\mathrm{Im}}
\newcommand{\dist}{\mathrm{dist}\,}

\newcommand{\B}{\mathscr{B}}
\renewcommand{\L}{\mathscr{L}}
\newcommand{\g}{\mathsf{g}}
\newcommand{\e}{\mathsf{e}}
\newcommand{\G}{\mathsf{G}}
\renewcommand{\O}{\mathsf{O}}
\newcommand{\I}{\mathbb{I}}
\renewcommand{\exp}{\mathrm{e}}
\renewcommand{\a}{\mathsf{a}}

\allowdisplaybreaks 
\begin{document}

\title[]{Asymptotic stability of solutions to semilinear evolution equations in Banach spaces}


\author[1]{\fnm{Francesco} \sur{Cellarosi}}\email{francesco.cellarosi@queensu.ca} \equalcont{These authors contributed equally to this work.}

\author[2]{\fnm{Anirban} \sur{Dutta}}\email{21ad53@queensu.ca}
\equalcont{These authors contributed equally to this work.}

\author*[3]{\fnm{Giusy} \sur{Mazzone}}\email{giusy.mazzone@queensu.ca}

\affil[1,2,3]{\orgdiv{Department of Mathematics \& Statistics}, \orgname{Queen's University}, \orgaddress{\street{48 University Ave}, \city{Kingston}, \postcode{K7L 3N6}, \state{Ontario}, \country{Canada}}}




\abstract{We prove a new linearization principle for the nonlinear stability of solutions to semilinear evolution equations of parabolic type. We assume that the set of equilibria forms a finite dimensional manifold of normally stable and normally hyperbolic equilibria. In addition, we assume that the linearized operator is the generator of an analytic semigroup (not necessarily stable). We show that if a mild solution to our evolution equation exists globally in time and remains ``close'' to the manifold of equilibria at all times, then the solution must eventually converge to an equilibrium point at an exponential rate. 

We apply our abstract results to the equations governing the motion of a fluid-filled heavy solid. Under general assumptions on the physical configuration and initial conditions, we show that weak solutions to the governing equations eventually converge to a steady state with an exponential rate. In particular, the fluid velocity relative to the solid converges to zero as $t\to\infty$ in $H^{2\alpha}_p(\Omega)$ for each $p\in [1,\infty)$ and $\alpha\in [0,1)$ as well as in $H^{2}_2(\Omega)$.}

\keywords{Asymptotic behavior of solutions, normally hyperbolic equilibrium, normally stable equilibrium, semilinear parabolic equations, fluid-solid interactions.}


\pacs[MSC Classification]{
35K58, 
35B35, 
35B40, 
35Q30, 
35Q35, 
74F10. 
}

\maketitle

\section{Introduction}
The analysis of the stability and long-time behaviour of solutions to evolution equations is a classical problem in the theory of differential equations, and finds its application to many problems arising in mechanics and in various areas of applied science. 

Consider a (nonlinear) autonomous evolution equation on a Banach space $X$ 
\begin{equation}\label{eq:intro-evolution}
\left\{\begin{aligned}
&\frac{\d u}{\d t}+f(u)=0&&\text{for }t>0,
\\
&u(0)=u_0\in X.  &&
\end{aligned}\right.
\end{equation}
Generally speaking, there are two broad methods, originally due to Lyapunov (see \cite{Lyapunov}), to study the stability of solutions to \eqref{eq:intro-evolution} ``near'' an equilibrium point (i.e., corresponding to $u_0$ ``sufficiently close'' to $\bar u$, where $\bar u$ solves $f(\bar u)=0$). The {\em direct method} is based on finding a so-called {\em Lyapunov function} $V(t)=V(u(t))$ satisfying certain monotonicity properties along solutions of \eqref{eq:intro-evolution}. For mechanical systems, $V$ often coincides with some {\em energy functional} associated to the physical system under consideration. However, for many mechanical systems (as simple as those with a finite number of degrees of freedom), finding an appropriate Lyapunov function may be a difficult task. A less direct method---which was actually the first one proposed by Lyapunov---is based on a {\em linearization principle}. In the finite-dimensional setting (e.g. if $X=\R^n$ in \eqref{eq:intro-evolution}), a linearization principle usually provides conditions under which the stability properties of equilibria of \eqref{eq:intro-evolution} can be completely determined by the spectral properties of the Jacobian matrix of the vector-valued function $f$ evaluated at the equilibrium point. This argument can be generalized in the infinite-dimensional setting; this is the main focus of our paper. 

We note that the stability properties of an equilibrium $\bar u$ of \eqref{eq:intro-evolution} are given by the corresponding stability properties of the trivial solution of 
\begin{equation}\label{eq:intro-evolution0}
\left\{\begin{aligned}
&\frac{\d u}{\d t}+Lu=G(u)&&\text{for }t>0,
\\
&u(0)=\bar u_0, &&
\end{aligned}\right.
\end{equation}
where $L:=f'(\bar u)$ is the Gateaux derivative of $f$ at $\bar u$, and $G(u):=f'(\bar u)u-f(u)$. There are numerous results that use linearization for the stability analysis of the zero solution to \eqref{eq:intro-evolution0} (respectively, to \eqref{eq:intro-evolution} if $f(0)=0$) in (infinite-dimensional) Banach spaces, see \cite{Ball73,Rostamian,DaPrato-Grisvard79,henry,Kielhofer75,Lions-heat}. These works are mainly concerned with semilinear 
evolution equations of parabolic type (see equation \eqref{eq:evolution} below). For similar results in the context of quasilinear evolution equations\footnote{These are equations of the type \eqref{eq:evolution} below, but with $A=A(u)u$.}, we refer the interested readers to \cite{Drangeid,Guidetti,Lunardi-quasilinear,Potier-FerryI,Simonett}. There is also extensive literature on stability results for evolution equations of hyperbolic type. We will not cite these works here since the current paper is focused on evolution equations of parabolic type. 

Common features of the papers cited above are that $0\in \varrho(L)$, where $\varrho(L)$ is the resolvent set of $L$, and/or that the trivial solution of $Lu=G(u)$ is ``isolated'', that is, there are no other (nonzero) solutions of $Lu=G(u)$ in a ball centered at $u=0$. The work \cite{DaPrato-Lunardi-center} uses linearization to characterize the asymptotic stability, instability or saddle point properties of the trivial solution of \eqref{eq:intro-evolution} (assuming that $f(0)=0$) in a more general setting, when the equation possesses a finite-dimensional {\em center manifold}. Specifically, in \cite[Sections 3 \& 4]{DaPrato-Lunardi-center}, it is assumed that $\lambda=0$ is a simple eigenvalue\footnote{This means that algebraic and geometric multiplicity of $\lambda=0$ is equal to 1, see \cite[Definition A.2.7]{Analytic-semigroups-Lunardi}. } of $L$, there are no eigenvalue on the imaginary axis, and the remainder of spectral points have positive real part. The books \cite{henry} and \cite{Analytic-semigroups-Lunardi} provide other results in this direction. 

More recently, \cite{PSZ09} presents a new linearization principle, that the authors call {\em generalized principle of linearized stability}, for quasilinear evolution equations characterized by a $C^1$-manifold of {\em normally stable} and/or {\em normally hyperbolic} equilibria (in the sense of the Definitions \ref{def:normally-stable} and \ref{def:normally-hyperbolic} below). In particular, $\lambda=0$ is assumed to be a {\em semi-simple} eigenvalue of $L$ (see Definition \ref{def:normally-stable}, part \ref{semi-simple}). That work provides nonlinear stability and instability results as well as conditions for the convergence, with exponential rate, of solutions to an equilibrium point (``near'' stable and unstable equilibria). While \cite{DaPrato-Lunardi-center,henry,Analytic-semigroups-Lunardi} use the semigroup approach (in particular, $L$ must be the generator of an analytic semigroup), in \cite{PSZ09} it is assumed that $L$ has the property of maximal $L^p$-regularity (which is a stronger functional setting for the existence of solutions to \eqref{eq:intro-evolution} compared to the so-called {\em mild solutions} found through the semigroup approach). 

Inspired by some fluid-solid interaction problems, the authors in \cite{RIMS,GaldiMazzone21} return to the semigroup approach and prove new linearization principles for semilinear evolution equations by replacing some of the geometric assumptions on the set of equilibria in \cite{PSZ09} with growth conditions for the nonlinear operator $G$ in \eqref{eq:intro-evolution0}. Their results are nonlinear stability and instability results, and provide conditions for the exponential convergence of solutions ``near'' a stable equilibrium. 

In this paper, we prove a more general linearization principle that applies to semilinear evolution equations with normally stable and/or normally hyperbolic equilibria, by assuming that the relevant linear operator $L$ in \eqref{eq:intro-evolution0} is the generator of an analytic semigroup. Theorem \ref{th:stable}
shows that any normally stable equilibrium is exponentially stable (in the sense of Definition \ref{def:asymptotic-stability}). Whereas, Theorem \ref{th:unstable} shows that any normally hyperbolic equilibrium is unstable. In addition, any global (in time) solution that remains ``close'' to the set of equilibria at all times must converge to an equilibrium point at an exponential rate. Our results generalize those of \cite{PSZ09} to a weaker functional setting (see Remarks \ref{rm:comaprison-PSZ09-stable},  \ref{rm:comaprison-PSZ09-unstable} and \ref{rm:X_alpha-X_p}), and they extend those of \cite{RIMS,GaldiMazzone21}, under more general assumptions on $G$ in \eqref{eq:intro-evolution0}, by proving results for exponential convergence of solution ``near'' unstable equilibra (see Remarks \ref{rm:comparison-GM21-stable} and \ref{rm:comparison-unstable}). Finally, we apply our abstract results to the equations governing the motion of a fluid-filled heavy solid. In a rather general geometrical and physical setting (see assumptions \ref{hp:physics-center},\ref{hp:physics-G}, \ref{hp:physics-lambda} and \ref{hp:physics-principal} below), we show that steady-state solutions to the equations of motion \eqref{eq:motion} are either normally stable or normally hyperbolic, and any weak solution ({\em \`a la Leray-Hopf}, in the sense of Definition \ref{def:weak-solutions}), corresponding to a large set of initial data,  converges to a steady-state solution at an exponential rate (see Theorem \ref{th:main-application}). In particular, the fluid velocity relative to the solid converges to zero as $t\to\infty$ in $H^{2\alpha}_p(\Omega)$ for each $p\in [1,\infty)$ and $\alpha\in [0,1)$ as well as in $H^{2}_2(\Omega)$. These outcomes enhance and improve the results found by the third author of this paper and collaborators in \cite{GaldiMazzone21, GaldiMazzoneMohebbi18}. 

We conclude this introduction with a brief outline of the paper. In Section \ref{sec:notation}, we fix the notation, introduce some definitions and recall some useful results. Our main results on the new linearization principles for semilinear evolution equations can be found in Section \ref{sec:main-thm}. In Section \ref{sec:application}, we present the application to the fluid-solid interaction problem of a fluid-filled heavy solid. In Appendix \ref{sec:flattening-equilibria}, we recall the reparametrization of the equilibria set introduced in \cite{PSZ09}. Finally, in Appendix \ref{app:unstable}, we reproduce a proof of the nonlinear instability property of normally hyperbolic equilibria that can be traced back to \cite{henry}.

\section{Notation and useful results}\label{sec:notation}
If $(X, \norm{\cdot}_X)$ is a Banach space, $B_{X}(x,R)$ denotes the open ball in $X$ centered at $x$ with radius $R$. For an interval $I\subset\R$ and $1\le p<\infty$, $L^p(I;X)$ (resp. $W^{k,p}(I;X)$, $k\in \N$) denotes the space of strongly measurable functions $f$ from $I$ to $X$ for which $\left(\int_I \norm{f(t)}^p_X\; d t\right)^{1/p}<\infty$ (resp. $\sum^k_{\ell=0}\left(\int_I \norm{\partial^\ell_t f (t)}^p_X\; d t\right)^{1/p}<\infty$). Similarly, $C^k(I;X)$ indicates the space of functions which are $k$-times differentiable with values in $X$, and having $\sup_{t\in I}\norm{\partial^\ell_t \cdot}_X < \infty$, for all $\ell = 0,1,...,k$. In addition, we write \( f \in C_w(I; X) \) when the map $ t \in I \mapsto \varphi(f(t)) \in \mathbb{R}$ is continuous for all bounded linear functionals \( \varphi \) defined on \( X \). 

Let $X$ and $Y$ be real Banach spaces, and $L:\;X\to Y$ be a linear operator. We will denote with $D(L)$, $N(L)$, and $R(L)$ the domain, kernel, and range of $L$, respectively. In addition, $\sigma(L)$ denotes the spectrum of $L$, and 
\[\begin{split}
&\Re(\sigma(L)):=\{\Re(\lambda):\; \lambda\in \sigma(L)\},
\\
&\Im(\sigma(L)):=\{\Im(\lambda):\; \lambda\in \sigma(L)\},
\end{split}\]
where $\Re(\lambda)$ and $\Im(\lambda)$ are the real and imaginary part of the (complex) number $\lambda$, respectively. Also, $\mathcal B(X;Y)$ denotes the space of all linear bounded operators $L:\; X\to Y$, whereas $C^1(X;Y)$ indicates the space of continuously differentiable operators from $X$ to $Y$. 

Let $A:\; D(A)\to X$, with $D(A) \hookrightarrow X$, be a linear, sectorial operator with 
$\Re (\sigma(A)) \subset (0, \infty)$. For $\alpha \in [0,1]$, the fractional powers $A^\alpha$ of $A$ are well-defined, and the following sets
\begin{align}\label{eq:X_alpha}
   X_{\alpha} := \{u \in X: \norm{u}_{X_\alpha}:=\norm{A^{\alpha} u}_X < \infty\}
\end{align}
are Banach spaces which are compactly embedded in $X$ (see \cite[Section 1.4]{henry}). In particular, $X_{0} \equiv X$, and for $u \in X$ we have that $\norm{u}_{0} \equiv \norm{u}_X$. We also recall that the following interpolation inequality holds
\begin{align}\label{eq:interpolation-X_alpha}
    \norm{u}_{X_\alpha} \leq c\norm{u}^{1-\alpha}_X\norm{Au}^{\alpha}_X.
\end{align}

Now, consider the following (nonlinear) autonomous evolution equation on a Banach space $X$:
\begin{equation}\label{eq:evolution0}
\left\{\begin{aligned}
&\frac{\d u}{\d t}+Au=F(u)&&\text{for }t>0,
\\
&u(0)=u_0. &&3
\end{aligned}\right.
\end{equation}
In the above, $A:\; X_1:=D(A)\to X$ is a linear, sectorial operator, and $F\in C^1(Y;X)$ with $X_1\hookrightarrow Y\hookrightarrow X$.  

We  recall some standard definitions.

\begin{definition}\label{def:stability-lyapunov} Given $A$ and $F$ as above, we say that $\bar{u}\in X_1$ is an {\em equilibrium} of \eqref{eq:evolution0} if $A\bar u=F(\bar{u})$. 
The equilibrium $\bar u$ of \eqref{eq:evolution0} is said to be {\em stable} (in the sense of Lyapunov) if for every $\varepsilon>0$, there exists $\delta>0$ such that for all $u_0\in Y$, with $\norm{u_0-\bar u}_Y<\delta$, the corresponding solution $u$ of \eqref{eq:evolution0} satisfies  
\[
\norm{u(t)-\bar u}_Y<\varepsilon \qquad \text{for all }t>0. 
\]
If $\bar u$ is not stable, we will say that it is {\em unstable}. 
\end{definition}
In many applications, like the one presented in Section \ref{sec:application}, the set 
\begin{equation}\label{eq:general-equilibria-set}
\mathcal E:=\{u_*\in D(A):\; Au_*=F(u_*)\}
\end{equation}
of all equilibria is not a union of isolated singletons, but it constitutes a non-trivial, finite-dimensional manifold. In view of our stability results, we modify the classical definition of {\em asymptotic stability} as follows:

\begin{definition}\label{def:asymptotic-stability}
We say that the equilibrium $\bar u$ of \eqref{eq:evolution0} is 
{\em exponentially stable in $Y$} 
if for every $\varepsilon >0$ there exists $\gamma>0$ such that for all $u_0\in Y$, with $\norm{u_0-\bar u}_Y<\gamma$, the corresponding solution $u$ of \eqref{eq:evolution0} satisfies
\begin{itemize}
    \item[(i)] $u\in C([0,\infty);Y)\cap C((0,\infty);X_1)\cap C^1((0,\infty);X)$;
    \item[(ii)] $\norm{u(t)-\bar u}_Y<\varepsilon$ for all $t>0$, i.e. $\bar u$ is stable (in the sense of Lyapunov); 
    \item[(iii)] there exist $u_*\in \mathcal E$ and $t_\infty\ge 0$ such that $\norm{u(t)-u_*}_Y<k\exp^{-\beta t}$ for all $t> t_\infty$, and some positive constants $k$ and $\beta$.
\end{itemize}
\end{definition}

Next, we recall the definitions of normally stable and normally hyperbolic equilibria from \cite{PSZ09}. 

\begin{definition}\label{def:normally-stable}
Let $\bar u\in X_1$ be an equilibrium of \eqref{eq:evolution0}, and set $L:=A-F'(\bar u)$, where $F'(\bar u)\in \mathcal B(Y;X)$ is the Gateaux derivative of $F$ at $\bar u$. The equilibrium $\bar u$ of \eqref{eq:evolution0} is said to be {\em normally stable} if the following conditions on $L$ hold. 
\begin{enumerate}[label=\rm{(\Roman*)},ref=\rm{(\Roman*)}]
    \item\label{equi1} There exist an open subset $U \subset \R^{m}$, $m\ge 1$, with $0 \in U$, and a $C^{1}$-function $\psi:\; U \to D(L)$, such that 
    \begin{enumerate}[label=\rm{(I.\alph*)},ref=\rm{(I.\alph*)}]
        \item\label{equi1a} $\psi (U) \subset \mathcal{E}$ and $\psi(0) = \bar u$;
        \item\label{equi1b} The rank of $\psi'(0)$ equals $m$;
        \item\label{equi1c} $A\psi(\zeta) = F(\psi(\zeta))$ for all $\zeta \in U$; and 
        \item\label{equi1d} $\mathcal{E} \cap B_{D(L)}(\bar u,r) = \psi(U)$ for some ball $B_{D(L)}(\bar u,r)$ of radius $r>0$.
    \end{enumerate}
    \item\label{tangent-space} The tangent space $T_{\bar u}\mathcal E$ of $\mathcal{E}$ at $\bar u$ is given by $N(L)$.
    \item\label{semi-simple} $\lambda=0$ is a {\em semi-simple eigenvalue} of L, i.e., 
    \begin{enumerate}[label=\rm{(III.\alph*)},ref=\rm{(III.\alph*)}]
        \item $0$ is an isolated eigenvalue of L; and 
        \item $N(L) \oplus R(L) = X$.
    \end{enumerate}
    \item\label{no-im} $\sigma(L) \cap i\mathbb R = \{0\}$.
    \item\label{ns} $\Re(\sigma(L))\setminus\{0\}\subset (0,+\infty)$. 
\end{enumerate}
\end{definition}

\begin{remark}\label{rm:spectral-projections-n-stable}
Assume that $\bar u$ is a normally stable equilibrium of \eqref{eq:evolution0} and $D(L)=D(A)\equiv X_1$. Then $\sigma(L)$ admits the following decomposition into two disjoint non-trivial parts  
\[
\sigma(L) = \sigma_{c}(L) \cup \sigma_{s}(L), 
\]
where $\sigma_{c}(L) :=\{0\}$ and 
\begin{equation}\label{eq:stable-spectrum}
\sigma_s(L):=\{\lambda\in \sigma(L):\; \Re(\lambda)\in (0,+\infty)\}.
\end{equation}
Following \cite[Theorems 5.7-A,B]{taylor1958}, we consider $P^{\ell}$ the spectral projection corresponding to the spectral set $\sigma_{\ell}(L)$, $\ell \in \{c,s\}$. To ease the notation, we relabel $X$ with $X_0$ (and recall that $X_1=D(L))$, then we set  
\begin{equation}\label{eq:X^ell_j}
X^{\ell}_{j} := P^{\ell}(X_{j}), \qquad\text{for }\ell \in \{c,s\},
\end{equation}
these spaces are equipped with the norms of $X_{j}$  for $j \in \{0,1\}$. 
Since $\{0\}$ is a compact spectral set, then $X^{c}_{0}\equiv X^{c}_{1} = N(L) =: X^c$ as vector spaces, and we endow $X^c$ with the norm $\norm{\cdot}_{X_0}$.\footnote{From the above definition of normally stable equilibrium, it follows that dim$(X^{c}) = m$. Since $X^{c}$ has finite dimension, all norms on it are equivalent. } We obtain the decompositions
\begin{equation}\label{eq:decomposition-n-stable}
\begin{split}
X_{0}= X^{c} \oplus X^{s}_{0},
\\
X_{1}= X^{c} \oplus X^{s}_{1},
\end{split}
\end{equation}
and with these decompositions, we can write 
\begin{equation}\label{eq:decomposition-L-n-stable}
L= L_{c} \oplus L_{S}, \qquad L_{\ell}:= L P^\ell = P^\ell L\qquad \text{for }\ell \in \{c,s\}.
\end{equation}
It follows that $\sigma(L_{\ell}) = \sigma_{\ell}(L)$ for $\ell \in \{c,s\}$. Certainly, $L_{c} \equiv 0$.
\end{remark}

In general, $L$ may have also eigenvalues with negative real part. We then denote with 
\begin{equation}\label{eq:unstable-spectrum}
\sigma_u(L):=\{\lambda\in \sigma(L):\; \Re(\lambda)\in (-\infty,0)\},
\end{equation}
the ``unstable'' part of the spectrum of $L$. We have the following definition of normally hyperbolic equilibrium.  

\begin{definition}\label{def:normally-hyperbolic}
With the same notation as in Definition \ref{def:normally-stable}, we say that the equilibrium $\bar u$ of \eqref{eq:evolution0} is {\em normally hyperbolic} if conditions (I)--(IV) of Definition \ref{def:normally-stable} hold, and condition (V) is replaced with the following one:
\begin{enumerate}[label=\rm{(V')},ref=\rm\rm{(V')}]
    \item\label{ni} $\sigma_u(L)\neq \emptyset$. 
\end{enumerate}
\end{definition}

\begin{remark}\label{rm:spectral-projections-n-hyperbolic}
Assume that $\bar u$ is normally hyperbolic, and that $L$ is sectorial with $D(L)=D(A)$. Following the same steps as in Remark \ref{rm:spectral-projections-n-stable} (thanks to \cite[Theorems 5.7-A,B]{taylor1958}), we consider the spectral projections $P^{\ell}$ corresponding to the spectral sets $\sigma_{\ell}(L)$, where $\sigma_{c}(L)= \{0\}$, whereas $\sigma_{s}(L)$ and $\sigma_{u}(L)$ are the stable and unstable parts of the spectrum of $L$ as defined in \eqref{eq:stable-spectrum} and \eqref{eq:unstable-spectrum}, respectively. Then, we set $X^{\ell}_{j} := P^{\ell}(X_{j})$ for $\ell \in \{c,u,s\}$, equipped with the norms of $X_{j}$.
As in Remark \ref{rm:spectral-projections-n-stable}, we notice that $X^{c}_{0}\equiv X^{c}_{1} = N(L) =: X^c$ as vector spaces, and we endow $X^c$ with the norm $\norm{\cdot}_{X_0}$. In addition, since the operator $-L$ generates an analytic semigroup on $X_{0}$, $\sigma_{u}(L)$ is a compact spectral set for $L$. Hence, $X^{u}_{0}\equiv X^{u}_{1}$ coincide as vector spaces. Now, $L_{u} := LP^{u}$, the part of $L$ in $X^{u}_{0}$, is invertible, so the spaces $X^{u}_{j}$  (for $j\in\{0,1\}$) carry equivalent norms. We then set $X^{u} := X^{u}_{0}= X^{u}_{1}$ and equip $X^{u}$ with the norm of $X_{0}$. We finally obtain the decompositions
\begin{equation}\label{eq:decomposition-X-n-hyperbolic}
\begin{split}
X_{0}= X^{c} \oplus X^{s}_{0} \oplus X^{u},
\\
X_{1}= X^{c} \oplus X^{s}_{1} \oplus X^{u},
\end{split}
\end{equation}
and with these decompositions, we can write 
\begin{equation}\label{eq:decomposition-L-n-hyperbolic}
L= L_{c} \oplus L_{s} \oplus L_{u},\qquad  L_{\ell} := LP^\ell = P^\ell L\qquad \text{for }\ell \in \{c,s,u\}.
\end{equation}
It follows that $\sigma(L_{\ell}) = \sigma_{\ell}(L)$ for $\ell \in \{c,s,u\}$, and again $L_{c} \equiv 0$.
\end{remark}

Under the assumptions \ref{H1} and \ref{H2} below (see also Remark \ref{rm:stability-0}), $L=A+B$ will be sectorial with $D(L)=D(A)$ and $-BA^{-\alpha}\in\mathcal B(X;X)$ for some $\alpha\in [0,1)$. Assume that $\Re(\sigma_s(L))\subset (\gamma, +\infty)$ for some $\gamma>0$. The following estimates for $L_s$ (defined in \eqref{eq:decomposition-L-n-stable} and \eqref{eq:decomposition-L-n-hyperbolic}) can be deduced from \cite[Theorem 1.5.4]{henry} and will be often used (sometime without any explicit mention) throughout the text: 
\begin{align}
\norm{\exp^{-tL_s}u}_{X_\beta}&\le c\norm{u}_X\,t^{-\beta}\exp^{-\gamma t},\label{eq:estimate-e-Ls-X}
\\
\norm{\exp^{-tL_s}u}_{X_\beta}&\le c\norm{u}_{X_\beta}\,\exp^{-\gamma t}.\label{eq:estimate-e-Ls-Xbeta}
\end{align}
These estimates hold for all $u\in X^s_0\cap D(A^\beta)$, $\beta\in [0,1]$, $t>0$, and with a positive constant $c$ independent of $u$ and $t$.\footnote{Note that we have used the same symbol for the constant $c$ as in \eqref{eq:interpolation-X_alpha}. In our estimates, we may use the same symbol for generic (positive) constants independent of the relevant variables of our problem.}

 We conclude this section, with some notation that will be used in Section \ref{sec:application}. 

We denote by $|\cdot|$ the Euclidean norm of vector fields (we will use the same symbol for the absolute value of scalar fields and for the Frobenius norm of tensor fields). For a domain $\Omega\subseteq\R^d$, $d\geq1$,  $p\in [1,\infty]$, and $k\in \N$, we denote by $L^p(\Omega)$ and $H^k_p(\Omega):=W^{k,p}(\Omega)$ the classical Lebesgue and Sobolev spaces, respectively. 
For $s\in \R$, $H^s_q(\Omega)$ denotes the Bessel potential space. We recall the characterization $H^s_q(\Omega)=[H^{s_0}_{q_0}(\Omega),H^{s_1}_{q_1}(\Omega)]_{\theta}$ as complex interpolation, which is valid for any real $s_0\ne s_1$, $q, q_0,q_1\in [1, \infty)$, $\theta\in (0,1)$, with $\frac{1}{q}= \frac{1-\theta}{q_0}+\frac{\theta}{q_1}$ and $s=(1-\theta)s_0+\theta s_1$ (see \cite[6.4.5. Theorem.]{bergh}).

We are now ready to present our new results on the nonlinear spectral stability analysis of equilibria of \eqref{eq:evolution0}.

\section{New principles of linearized stability for semilinear evolution equations}\label{sec:main-thm}

Let $(X,\norm{\cdot}_{X})$ be a Banach space, and consider the following evolution equation on $X$ 
\begin{equation}\label{eq:evolution}
\left\{\begin{aligned}
&\frac{\d u}{\d t}+Au=F(u)&&\text{for }t>0,
\\
&u(0)=u_0\in X. &&
\end{aligned}\right.
\end{equation}
We make the following assumptions on the linear operator $A$ and on the nonlinear operator $G$: 
\begin{enumerate}[label=(H\upshape\arabic*),ref=(H\upshape\arabic*)]
    \item\label{H1} $A:\;D(A)\to X$ is a linear, sectorial operator with $D(A)\hookrightarrow X$ and $\Re(\sigma(A))\subset (0, \infty)$;
    \item\label{H2}$F\in C^1(X_\alpha;X)$ for some $\alpha\in [0,1)$; we recall the definition of $X_\alpha$ in \eqref{eq:X_alpha}.
\end{enumerate}

\begin{remark}\label{rm:stability-0}
As customary in stability analysis, the study of the stability properties of an equilibrium $\bar u$ of the evolution equation \eqref{eq:evolution} can be reduced to the study of the (corresponding) stability properties of the solution $u=0$ of the following problem 
\begin{equation}\label{eq:evolution1}
\left\{\begin{aligned}
&\frac{\d u}{\d t}+Lu=G(u) &&\text{for }t>0,
\\
&u(0)=u_0-\bar u\in X, &&
\end{aligned}\right.
\end{equation}
where $Lu:=Au-F'(\bar u)u$ and $G(u):=F(u+\bar u)-F(\bar u)- F'(\bar u)u$. 

If we denote with $B:=-F'(\bar u)$, then $B\in \mathcal B(X_\alpha;X)$, that is 
    \begin{align*}
        \norm{Bu}_{X} \leq c_\alpha \norm{u}_{X_\alpha}, \qquad\text{with }\alpha \in [0,1),
    \end{align*}
where $c_\alpha$ is a positive constant. 
Under the above assumptions, we have that $D(L)=D(A)=X_1$.
From assumption \ref{H1}, \eqref{eq:interpolation-X_alpha}, and \cite[Proposition 2.4.1 (i)]{Analytic-semigroups-Lunardi} 
it also follows that $L$ is a sectorial. In addition, the estimates \eqref{eq:estimate-e-Ls-X} and \eqref{eq:estimate-e-Ls-Xbeta} hold with $\beta=\alpha$ and the notation introduced in Remark \ref{rm:spectral-projections-n-hyperbolic}. 
\end{remark}

The set of equilibria of \eqref{eq:evolution} (cf. \eqref{eq:general-equilibria-set}) is given by 
\begin{equation}\label{eq:equilibria-set}
    \mathcal{E} = \{u\in D(A):\; Au=F(u)\}. 
\end{equation} 

\begin{theorem}\label{th:stable}
Let $(A,F)$ satisfy the hypotheses \ref{H1} and \ref{H2}. If $\bar u$ is a normally stable equilibrium of \eqref{eq:evolution}, then it is exponentially stable (in the sense of Definition \ref{def:asymptotic-stability}). 
\end{theorem}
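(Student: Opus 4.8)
The plan is to follow the graph‑over‑the‑centre‑manifold strategy of \cite{PSZ09}, adapted to the present mild‑solution setting through the smoothing estimates \eqref{eq:estimate-e-Ls-X}--\eqref{eq:estimate-e-Ls-Xbeta}. First I would reduce to the origin: by Remark \ref{rm:stability-0}, studying the stability of $\bar u$ for \eqref{eq:evolution} is equivalent to studying that of $v=0$ for the shifted problem with $L=A-F'(\bar u)$ and $G(v)=F(v+\bar u)-F(\bar u)-F'(\bar u)v$, where $L$ is sectorial with $D(L)=X_1$. Since $F\in C^1(X_\alpha;X)$ one has $G(0)=0$ and $G'(0)=0$, so for every $\epsilon>0$ there is $\rho>0$ with $\norm{G(v)-G(w)}_X\le\epsilon\,\norm{v-w}_{X_\alpha}$ whenever $\norm{v}_{X_\alpha},\norm{w}_{X_\alpha}\le\rho$; this quadratic smallness of $G$ near the origin is the engine of the argument. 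Standard semilinear theory (variation of constants plus a contraction in $C([0,T];X_\alpha)$, as in \cite{henry}) then furnishes a unique local mild solution with the regularity $v\in C([0,T);X_\alpha)\cap C((0,T);X_1)\cap C^1((0,T);X)$ required in part~(i) of Definition \ref{def:asymptotic-stability}, so that only global existence and the decay estimate remain to be established.

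Next I would encode the centre directions. Using the spectral decomposition $X_0=X^c\oplus X^s_0$, $X_1=X^c\oplus X^s_1$ from Remark \ref{rm:spectral-projections-n-stable} and the reparametrization recalled in Appendix \ref{sec:flattening-equilibria}, the shifted equilibrium set is written near $0$ as a graph $\{x+\chi(x):\,x\in B_{X^c}(0,r_0)\}$ with $\chi\in C^1(X^c;X^s_1)$, $\chi(0)=0$, $\chi'(0)=0$; here one uses that $P^c$ maps the tangent space $N(L)=X^c$ (condition \ref{tangent-space}) isomorphically onto $X^c$. Writing a solution as $v=v_c+v_s$, $v_c=P^cv$, $v_s=P^sv$, I introduce $x:=v_c$ and $y:=v_s-\chi(x)$, so that $y$ measures the distance to the manifold in the stable direction. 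Applying $P^c$ and $P^s$ to the shifted equation, using $L_c\equiv0$, and exploiting the equilibrium identities $P^cG(x+\chi(x))=0$ and $L_s\chi(x)=P^sG(x+\chi(x))$, the dynamics decouples into
\begin{equation*}
\dot x=P^c\big[G(v)-G(x+\chi(x))\big],\qquad \dot y+L_s y=P^s\big[G(v)-G(x+\chi(x))\big]-\chi'(x)\,\dot x .
\end{equation*}
Since $v-(x+\chi(x))=y$, both right‑hand sides are controlled by $C\epsilon\,\norm{y}_{X_\alpha}$ for $(x,y)$ small; crucially, the centre variable $x$ does not move on the manifold and otherwise drifts only at the rate $\norm{y}_{X_\alpha}$.

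The main work, and the step I expect to be the chief obstacle, is to close a self‑consistent bootstrap yielding global existence together with exponential decay of $y$. Fixing $\beta\in(0,\gamma)$ with $\Re(\sigma_s(L))\subset(\gamma,\infty)$, I would write $y$ in mild form and estimate it in the weighted norm $\sup_{t\ge0}\exp^{\beta t}\norm{y(t)}_{X_\alpha}$ via \eqref{eq:estimate-e-Ls-X}--\eqref{eq:estimate-e-Ls-Xbeta} with $\beta=\alpha$; the convolution $\int_0^t(t-s)^{-\alpha}\exp^{-\gamma(t-s)}\exp^{-\beta s}\,\d s\le C\exp^{-\beta t}$ converges because $\alpha<1$ and $\beta<\gamma$, so for $\epsilon$ small the linear decay absorbs the nonlinear term and gives $\norm{y(t)}_{X_\alpha}\le C\,\norm{v_0}_{X_\alpha}\exp^{-\beta t}$. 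The delicate point is that this estimate is valid only while the solution stays in $B_{X_\alpha}(0,\rho)$, where $\chi$ and the smallness of $G$ hold, whereas $x$ keeps drifting. One must therefore run a continuation argument on the maximal interval of existence, using $\norm{\dot x(t)}_X\le C\epsilon\,\norm{y(t)}_{X_\alpha}$ and the finite dimensionality of $X^c$ (all norms equivalent) to bound $\int_0^\infty\norm{\dot x}_X\,\d s$ by a small multiple of $\norm{v_0}_{X_\alpha}$, thereby preventing the slow drift of $x$ from expelling the solution from the ball before the decay takes over. This keeps $\norm{v(t)}_{X_\alpha}$ below any prescribed $\varepsilon$ when $\norm{v_0}_{X_\alpha}$ is small, giving simultaneously Lyapunov stability (part~(ii)) and global existence.

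Finally, convergence follows by integrating the centre equation. As $\norm{\dot x(t)}_X\le C\exp^{-\beta t}$ is integrable, $x(t)\to x_\infty$ in $X^c$ with $\norm{x(t)-x_\infty}_{X_\alpha}\le C\exp^{-\beta t}$; hence $u(t)=\bar u+x(t)+\chi(x(t))+y(t)$ converges to $u_*:=\bar u+x_\infty+\chi(x_\infty)\in\mathcal E$, and combining the exponential decay of $y$, the Lipschitz continuity of $\chi$, and the rate for $x$ yields $\norm{u(t)-u_*}_{X_\alpha}\le k\exp^{-\beta t}$, which is precisely part~(iii) of Definition \ref{def:asymptotic-stability}. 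The regularity asserted in part~(i) is then inherited from the smoothing of the analytic semigroup generated by $-L$ applied to the mild solution.
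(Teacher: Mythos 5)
Your proposal is correct and follows essentially the same route as the paper's proof: reduction to the shifted problem, the centre/stable splitting with the flattened parametrization $\phi_s$ (your $\chi$) of the equilibria manifold, the normal form \eqref{decomposed equation} with the smallness estimate \eqref{4.14}, a continuation argument keeping the solution in a small ball, and integration of the centre drift to identify the limit $u_\infty=\bar u+x_\infty+\phi_s(x_\infty)\in\mathcal E$ with exponential rate. The only deviation is technical: you close the decay of the stable component $y$ by an absorption argument in the weighted norm $\sup_{t}\exp^{\beta t}\norm{y(t)}_{X_\alpha}$, whereas the paper raises the mild estimate to the $p$-th power via H\"older's inequality and applies Gr\"onwall's inequality; both devices are valid and yield the same exponential bound.
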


\begin{proof}
    The proof is based on the analysis of the stability of the solution $u=0$ for an evolution equation equivalent to \eqref{eq:evolution1} ---referred to as its normal form--- obtained by using the special structure of the set of equilibria $\mathcal{E}$ near a normally stable equilibrium (see part \ref{equi1} of Definition \ref{def:normally-stable}) and the spectral projections introduced in Remark \ref{rm:spectral-projections-n-stable}. Such evolution equation has been introduced in \cite[Theorem 2.1.]{PSZ09} in the context of maximal $L^p$-regularity. We reproduce its derivation here for completeness. 
Recall the decompositions \eqref{eq:decomposition-n-stable}, 
allowing us to write $L= L_{c} \oplus L_{s}$ as in \eqref{eq:decomposition-L-n-stable}.
In Appendix \ref{sec:flattening-equilibria} we construct a map $\Phi$ that will allow us to parametrize a neighbourhood of the normally stable equilibrium $\bar u$. Namely, for some $\rho_0>0$ we find $\Phi\in C^1(B_{X^{c}}(0,\rho_{0}); X_1)$
such that 
    \[
    \Phi(0)=0,\qquad \{\Phi(x)+\bar u:\; x\in B_{X^{c}}(0, \rho_{0})\}=\mathcal E\cap W,
    \]
where $W$ is a suitable neighborhood of $\bar{ u}$ in $X_1$. Define $\phi_{s}(x) := P^s\Phi(x)=\Phi(x)-x$ for $x \in B_{X^{c}}(0, \rho_{0})$. Note that
\[
\phi_{s} \in C^1(B_{X^{c}}(0, \rho_{0}); X^s_1), \quad \phi_{s}(0) = \phi_{s}'(0) = 0.
\]
The mapping $\phi_s$ describes the sub-linear part of the parametrization of the manifold $\mathcal{E}$ of equilibria near $\bar u$ via
\[
B_{X^{c}}(0, \rho_{0}) \ni x \mapsto \bar{u} + x + \phi_{s}(x)\in \mathcal E\cap W.
\]
        
    Since $\phi_s'(0)=0$ and $\phi_s$ is $C^1$, we may  assume that $\rho_0>0$ is chosen so that 
    \begin{align}\label{norm-of-phi_s'leq1}
        \lVert \phi '_{s}(x) \rVert_{\mathcal{B}(X^{c};X^s_1)} \leq 1 \mbox{ for  } x \in B_{X^{c}}(0, \rho_{0}).
        \end{align}
We observe that condition \ref{equi1c} can be equivalently rephrased as
the system of equations 
\begin{equation}
         \left\{ \begin{aligned}
        &P^{c}G( x + \phi_{s}(x))=0, &\text{}\\
        &P^{s}G( x + \phi_{s}(x))= L_{s}\phi_{s}(x), \label{split-equilibri-equ}
          \end{aligned}\right.\quad x \in B_{X^{c}}(0, \rho_{0}).
\end{equation}

Introducing the new variables 
\begin{center}
    $x := P^{c}(u- \bar u),\quad y:= P^{s}(u- \bar u)-\phi_{s}(x)$
\end{center}
and defining
\begin{align}\nonumber
    T(x,y) &:= P^{c}\left(G(x + y + \phi_{s}(x))- G(x + \phi_{s}(x))\right), &\text{}\\ \nonumber
    R_{s}(x,y) &:= P^{s}(G(x + y + \phi_{s}(x))- G(x + \phi_{s}(x)) - \phi '_{s}(x)T(x,y),
\end{align}
we rewrite \eqref{eq:evolution1} as the following system of evolution equations:
\begin{equation} \left\{\label{decomposed equation}
\begin{aligned}
\displaystyle
&\frac{\d x}{\d t} = T(x,y), \qquad &&x(0)= x_{0}, \\[5pt]
&\frac{\d y}{\d t} + L_{s}y(t) = R_{s}(x,y), \qquad && y(0)= y_{0} . \\[0pt] 
\end{aligned}\right.
\end{equation}
      Clearly, $T(x,0) = R_{s}(x,0)=0$, for $x \in B_{X^{c}}(0, \rho_{0})$, showing that the equilibria set $\mathcal{E}$ of \eqref{eq:evolution} near $\bar u$ is in one-to-one correspondence with the  set $B_{X^{c}}(0, \rho_{0}) \times \{0\} \subset X^{c} \times X^{s}_{1}$ of equilibria of \eqref{decomposed equation}.\footnote{Note that we have a one-to-one correspondence between the solutions of \eqref{eq:evolution} near $\bar u$ in $X_{\alpha}$ and solutions of \eqref{decomposed equation} near $0$.} We refer to the system \eqref{decomposed equation} as the {\em normal form} of \eqref{eq:evolution} around its normally stable equilibrium $\bar u$.

 Since $G$ is a $C^{1}$ function from $X_{\alpha}$ to $X_0$, given $\eta > 0$ (to be chosen later) we can find $r=r(\eta)\leq 5\rho_0$ small enough 
 such that
      \begin{align}\label{2.12}
          \lVert G(u_{1}) - G(u_{2}) \rVert_{X_0} \leq \eta \lVert u_{1} - u_{2} \rVert_{X_\alpha}
      \end{align}
      for all $u_{1} , u_{2} \in B_{X_{\alpha}}(0, r)$. Using  \eqref{2.12} along with the fact that $P^{c}$ and $P^{s}$ are bounded on $X_{0}$ and the bound \eqref{norm-of-phi_s'leq1},
      we obtain the estimates  
    \begin{align}\label{4.14}
        \left \lVert T(x, y) \right \rVert_{X_0} , \lVert R_{s}(x, y) \rVert_{X_0} \leq \beta\lVert y \rVert_{X_\alpha},
    \end{align}
    for all $x \in  \overline{B_{X^{c}}(0, \rho)}$ and $y \in \overline{B_{X^{s}_{\alpha}}(0, \rho)}$, 
where $\rho=\frac{r}{5}$
and 
$\beta = c_{2}\eta$ for some  absolute constant $c_{2}$. 

Let $t_{*}$ denote the maximum existence time for the solution $(x(t), y(t))$ of the system \eqref{decomposed equation}.
Consider
\begin{align}
        t_{1} = t_{1}(x_{0},y_{0}) :=  \sup \left\{t\in (0, t_{*}) \colon \lVert x(\tau) \rVert_{X_0}, \lVert y(\tau) \rVert_{X_\alpha} \leq \rho, \mbox{ for all }\tau \in [0,t]\right\}.\label{def-t_1}
    \end{align}
We claim that $t_1=t_*$. Suppose the contrary, i.e. that $t_{1} < t_{*}$. Applying the variation of parameter formula to the second equation in \eqref{decomposed equation}, for all $t \in [0, t_{1}]$ we get
\begin{equation}
    \lVert y(t) \rVert_{X_\alpha} \leq \left\lVert\exp^{-L_{s}t} y_{0}\right\rVert_{X_\alpha}+ \int_{0}^{t} \left \lVert\exp^{-L_{s}(t - s)}R_{s}(x(s),y(s)) \right\rVert_{X_\alpha}\! \d s.
\end{equation}
By the spectral properties of $L$, we know that there exists $\gamma > 0$ such that
$\Re \left(\sigma_s(L)\right) \subset (\gamma, \infty)$. 
This, along with 
\eqref{4.14} and the fact that $L$ is sectorial, yields 
\begin{align}
    \lVert y(t) \rVert_{X_\alpha} &\leq M_{\alpha} \exp^{-\gamma t} \lVert y_{0}\rVert_{X_\alpha} + \int_{0}^{t} M_{\alpha} \frac{\exp^{-\gamma (t - s)}}{(t-s)^{\alpha}} \left \lVert R_{s}(x(s), y(s))\right 
    \rVert_{X_0} \d s\nonumber\\ 
    &\leq M_{\alpha} \exp^{-\gamma t} \lVert y_{0}\rVert_{X_{\alpha}} + \int_{0}^{t} M_{\alpha} \frac{\exp^{-\gamma (t - s)}}{(t-s)^{\alpha}} \beta\,\lVert y(s) \rVert_{X_\alpha} \d s. \label{estimate 1}
\end{align}
Now we choose $b >0$ such that $\gamma - b > 0$ and $1 <p' < \frac{1}{\alpha}$. Applying  H\"{o}lder's inequality to \eqref{estimate 1}, we get, for all $t \in [0,t_{1}]$,
\begin{align*}
    \lVert y(t) \rVert_{X_{\alpha}}
    &\leq M_{\alpha} \exp^{-\gamma t} \lVert y_{0}\rVert_{X_\alpha} 
    \\
    &\qquad + \beta  M_{\alpha} \left(\int_{0}^{t} \frac{\exp^{(-\gamma p' + bp') (t - s)}}{(t-s)^{p' \alpha}}\d s\right)^{\frac{1}{p'}}
    \left(\int_{0}^{t} \exp^{- bp (t - s)} \lVert y(s) \rVert_{X_\alpha}^{p}\d s\right)^\frac{1}{p}.
\end{align*}
Since $
\displaystyle\int_{0}^{\infty}  \dfrac{ \exp^{-b s} }{ s^{\alpha}} \d s < \infty$,  we get

\begin{align*}
    \lVert y(t) \rVert_{{X_\alpha}} \leq M_{\alpha} \exp^{-\gamma t} \lVert y_{0}\rVert_{X_\alpha} + \beta M_{\alpha} \left(\int_{0}^{t} \exp^{- bp (t - s)} \lVert y(s) \rVert_{X_\alpha}^{p}\d s\right)^\frac{1}{p}.
\end{align*}
Therefore, we have
\begin{align}
    \lVert y(t) \rVert_{X_\alpha}^{p} \leq M_{\alpha}^{p} \exp^{-p\gamma t} \lVert y_{0}\rVert_{X_\alpha}^{p} + (\beta M_{\alpha})^{p} \int_{0}^{t} \exp^{- bp (t - s)} \lVert y(s) \rVert_{X_\alpha}^{p} \d s.
\end{align}
Now we choose $\eta>0$  sufficiently small in \eqref{2.12} so that  $\beta=c_2\eta$ satisfies the inequality $pb-(\beta M_{\alpha})^{p}=:\omega>0$. By Gr\"onwall's inequality we get
\begin{align}\label{2.18}
    \lVert y(t) \rVert_{X_\alpha}^{p} \leq  M_{\alpha}^{p} \lVert y_{0}\rVert_{X_\alpha}^{p} \exp^ {-\omega t},
\end{align}
for all $t \in [0,t_{1}]$.

Applying the variation of parameter formula to the first equation in \eqref{decomposed equation}, and using \eqref{4.14} together with \eqref{2.18}, we get
\begin{align}
    \lVert x(t) \rVert_{X_0} &\leq \lVert x_{0}\rVert_{X_0} +  \int_{0}^{t} \lVert T(x(s), y(s))\rVert_{X_0} \d s\nonumber\\
    &\leq\lVert x_{0}\rVert_{X_0} +  \int_{0}^{t}\beta\lVert y(s) \rVert_{X_\alpha}  \d s \nonumber \\
    &\leq \lVert x_{0}\rVert_{X_0} +  \beta M_{\alpha} \lVert y_{0}\rVert_{X_\alpha} \int_{0}^{t}  \exp^{-\frac{\omega t}{p}} \d s \nonumber\\
    &\leq \lVert x_{0}\rVert_{X_0} + \beta  M_{\alpha} \lVert y_{0}\rVert_{X_\alpha}.\label{eq:estimate-stable-x}\end{align}
Now we choose the initial data $x_0, y_0$ in \eqref{decomposed equation} sufficiently small to ensure that
\begin{align}
    M_{\alpha}\lVert y_{0}\rVert_{X_\alpha} +  2\left(\lVert x_{0}\rVert_{X_0} + \dfrac{p \beta M_{\alpha} \lVert y_{0}\rVert_{X_\alpha} }{\omega}\right) \leq \frac{\rho}{2}.\label{eq:stable-assumption-small-initial-conditions}
\end{align}

Since $t_{1} < t_{*}$, by the continuity of the solution $u(t)$ in $X_\alpha$ and the estimates \eqref{2.18}--\eqref{eq:estimate-stable-x}--\eqref{eq:stable-assumption-small-initial-conditions}, we get
\begin{align}
\lVert x(t_{1}) \rVert_{X_0}, \lVert y(t_{1}) \rVert_{X_{\alpha}}  \leq \frac{\rho}{2} < \rho.
\end{align}
This contradicts the definition of $t_{1}$. Therefore, we have $ t_{*}= t_{1}$.

{We use \eqref{def-t_1} to argue that $t_*= \infty$. In fact, if the opposite was true, i.e. $t_{*} < \infty$, then $\lVert u(t) \rVert_{X_\alpha} \rightarrow \infty$ as $t \rightarrow t_{*}^-$}.

We have shown that the solution $u(t)$ of \eqref{eq:evolution1} exists globally and for all $t \geq 0$ we have
\begin{align}
&\lVert x(t) \rVert_{X_0}, \lVert y(t) \rVert_{X_\alpha} \leq \rho,\label{x_and_y_in_rho_ball}\\
&\lVert y(t) \rVert_{X_\alpha}^{p} \leq  M_{\alpha}^{p} \lVert y_{0}\rVert_{X_\alpha}^{p} \exp^{-\omega t}.\label{y(t)_decays_exponentially_fast}
\end{align}

To show that $u(t)$ is stable, note that \eqref{x_and_y_in_rho_ball} gives
\begin{equation}\label{u(t)_u_bar_leq_C_alpha_rho}
    \begin{aligned}
    \lVert u(t)-\overline{u}\rVert_{X_\alpha}&\leq \lVert x(t)\lVert_{X_0}+\lVert y(t)\rVert_{X_\alpha}+\lVert \phi_s(x(t))\rVert_{X_\alpha}\\
    &\leq \lVert x(t)\rVert_{X_0} +\lVert y(t)\rVert_{X_\alpha}+C_\alpha \lVert \phi_s(x(t))\rVert_{X_1}\\
    &\leq C_\alpha\lVert x(t)\rVert_{X_0}+\lVert y(t)\rVert_{X_\alpha}\\
    &\leq C_\alpha\rho
    \end{aligned}
\end{equation}
for every $t\geq 0$, provided \eqref{eq:stable-assumption-small-initial-conditions} holds.

By \eqref{y(t)_decays_exponentially_fast},  
we note that  $\displaystyle\int_{0}^{\infty} \lVert T(x(s), y(s))\rVert_{X_0} \d s$ is a convergent integral due to the exponential decay of $\lVert y(t) \rVert_{X_\alpha}$. We define
\begin{equation}
    x_{\infty} := \lim\limits_{t \to \infty} x(t) = x_{0} + \int_{0}^{\infty} T(x(s), y(s)) \d s
\end{equation}
and we obtain the following estimates:
\begin{align}
        \lVert x(t) - x_{\infty} \rVert_{X_0} &\leq \int_{t}^{\infty} \lVert T(x(s), y(s))\rVert_{X_0} \d s \\
        &\leq \beta \int_{t}^{\infty} \lVert y(s) \rVert_{X_\alpha}  \d s \\\label{2.22}
       &\leq  \lVert y_{0}\rVert_{X_\alpha} \beta \exp^{-\frac{\omega t}{p}}.
\end{align}
In addition, by \eqref{norm-of-phi_s'leq1}, we have \begin{equation}\label{2.23}
    \begin{aligned}
        \lVert \phi_{s}(x(t)) - \phi_{s}(x_{\infty}) \rVert_{X_\alpha} \leq  C_\alpha \lVert x(t) - x_{\infty} \rVert_{X_0} \leq  C_\alpha \lVert y_{0}\rVert_{X_\alpha} \beta \exp^{-\frac{\omega t}{p}}.
    \end{aligned}
\end{equation}

Set $u_\infty :=\bar u + x_{\infty} + \phi_{s}(x_{\infty}) \in \mathcal{E}$. 
Using \eqref{2.22} and \eqref{2.23}, we conclude the proof by observing that  
\begin{equation}\label{u(t)-u_inf_conclusion}
    \begin{aligned}
        \lVert u(t) - u_{\infty} \rVert_{X_\alpha} &\leq \lVert x(t) - x_{\infty} \rVert_{X_0} + \lVert \phi_{s}(x(t)) - \phi_{s}(x_{\infty}) \rVert_{X_\alpha} + \lVert y(t) \rVert_{X_\alpha} \\
        &\leq C_\alpha \lVert y_{0}\rVert_{X_\alpha} \exp^{-\frac{\omega t}{p}}.
    \end{aligned}
\end{equation}
\end{proof}

\begin{remark}\label{rm:comaprison-PSZ09-stable}
Our result can be rephrased as a ``generalized principle of linearized stability'', comparable to the one proved in \cite[Theorem 2.1.]{PSZ09}. In our case, after linearizing our evolution equation near an equilibrium (see Remark \ref{rm:stability-0}), the main linear operator $L$ is sectorial. Even though the result \cite[Theorem 2.1.]{PSZ09} deals with more general (quasi-linear) evolution equations, the authors assume that the corresponding linearization $L$ possesses the property of maximal \( L^p \)-regularity (which is a stronger assumption than ours). Furthermore, after linearization, our nonlinear term satisfies 
$G \in C^1(X_\alpha; X_0)$, whereas the functional space characterizing the domain of their nonlinear term requires higher regularity on the solution to \eqref{eq:evolution1} (in particular, the variation of parameter formula can not be applied). 
\end{remark}

\begin{remark}\label{rm:comparison-GM21-stable}
    A similar exponential stability result for the solution $u=0$ of \eqref{eq:evolution1} has been proved in \cite[Theorem 2.2]{GaldiMazzone21}. In that paper, the authors assume that the nonlinear operator $G$ can be controlled by the $X_\alpha$-norm of the stable component of the solution in a sufficiently small neighborhood of the origin, see \cite[condition (H5)]{GaldiMazzone21}. We do not need such a condition here because of the structure of our set of equilibria (in particular, because of Definition \ref{def:normally-stable} parts \ref{equi1}-\ref{tangent-space}). 
\end{remark}

We are now ready to discuss our instability result. 

\begin{theorem}\label{th:unstable}
Let $(A,F)$ satisfy the hypotheses \ref{H1} and \ref{H2}. If $\bar u$ is a normally hyperbolic equilibrium of \eqref{eq:evolution}, then it is unstable.

Additionally, for every $\rho > 0$ there exists $\delta\in (0,\rho]$ such that the unique solution $u$ of \eqref{eq:evolution} corresponding to $u_0 \in B_{X_\alpha}(\bar u,\delta)$, in the regularity class  
\[
C([0,t^*);X_\alpha)\cap C((0,t^*);D(A))\cap C^1((0,t^*);X)
\]
for some maximal existence time $t^*\in (0,+\infty]$, satisfies at least one of the following two conditions:
\begin{enumerate}[label=(\alph*),ref=(\alph*)]
    \item\label{ua1}  there exists a time $t_0\in (0,t^*)$ such that $\dist_{X_{\alpha}}(u(t_{0}), \mathcal{E}) > \rho$; 
    \item\label{ua2} $t^*= +\infty$ and there exists $u_\infty\in \mathcal E $ such that 
    \begin{equation}\label{eq:exp-convergence-unstable}
    \norm{u(t)-u_\infty}_{X_\alpha}< c\, \exp^{-kt}\qquad \text{for all }{t>t_\infty}
    \end{equation}
    {for some $t_\infty\ge 0$}, and positive constants $c$ and $k$.
\end{enumerate}
\end{theorem}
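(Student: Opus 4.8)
The plan is to reduce to a normal form exactly as in the proof of Theorem \ref{th:stable}, but now carrying along the extra unstable block coming from $\sigma_u(L)$. First I would linearize at $\bar u$ as in Remark \ref{rm:stability-0}, so that $v:=u-\bar u$ solves $\dot v+Lv=G(v)$ with $L$ sectorial and $D(L)=D(A)$, and invoke the spectral decomposition of Remark \ref{rm:spectral-projections-n-hyperbolic}, writing $X_j=X^{c}\oplus X^{s}_{j}\oplus X^{u}$ and $L=L_{c}\oplus L_{s}\oplus L_{u}$ with $L_{c}\equiv 0$. Using the parametrization $\Phi\in C^1(B_{X^{c}}(0,\rho_{0});X_1)$ of $\mathcal E$ near $\bar u$ from Appendix \ref{sec:flattening-equilibria} (with $\Phi(0)=0$), I would split $\Phi=\mathrm{id}+\phi_{s}+\phi_{u}$ with $\phi_{s}\in C^1(\,\cdot\,;X^{s}_1)$, $\phi_{u}\in C^1(\,\cdot\,;X^{u})$, both vanishing to first order at $0$, and introduce the coordinates $x:=P^{c}v$, $y:=P^{s}v-\phi_{s}(x)$, $z:=P^{u}v-\phi_{u}(x)$. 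As in the stable case this yields a normal form $\dot x=T$, $\dot y+L_{s}y=R_{s}$, $\dot z+L_{u}z=R_{u}$, where $T,R_{s},R_{u}$ vanish on $\{y=z=0\}$ (the image of $\mathcal E$) and, after shrinking the ball and using the $C^1$-smallness of $G$, satisfy Lipschitz bounds $\norm{T}_{X_0},\norm{R_{s}}_{X_0},\norm{R_{u}}_{X_0}\le\beta(\norm{y}_{X_\alpha}+\norm{z}_{X_\alpha})$ analogous to \eqref{4.14}.

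For the instability assertion (first sentence) I would cite the construction reproduced in Appendix \ref{app:unstable}: since $\sigma_u(L)\neq\emptyset$, a Henry-type argument produces arbitrarily small initial data whose solutions leave a fixed neighbourhood of $\bar u$, so $\bar u$ is unstable in the sense of Definition \ref{def:stability-lyapunov}. The substance is the dichotomy. Fixing $\rho>0$, choosing $\delta\in(0,\rho]$ small, and taking $u_0\in B_{X_\alpha}(\bar u,\delta)$, I would assume that \ref{ua1} fails, i.e. $\dist_{X_\alpha}(u(t),\mathcal E)\le\rho$ for all $t\in(0,t^*)$, and prove \ref{ua2}. The distance bound translates into smallness of the off-manifold coordinates, $\norm{y(t)}_{X_\alpha},\norm{z(t)}_{X_\alpha}\lesssim\rho$, while $x$ grows at worst linearly on bounded intervals via $\dot x=T$; together with the blow-up criterion for the semilinear problem this forces $t^*=+\infty$.

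The crucial step is the unstable coordinate $z$. Since $X^{u}$ is finite dimensional and $\Re(\sigma(L_{u}))<0$, the family $\exp^{-tL_{u}}$ is a group with $\norm{\exp^{tL_{u}}}\le c\,\exp^{-\nu t}$ for some $\nu>0$ and $t\ge0$; hence $z$ cannot be propagated forward without loss, and I would instead represent it backward, using global existence and boundedness,
\[
z(t)=-\int_t^\infty \exp^{-L_{u}(t-s)}R_{u}(x(s),y(s),z(s))\,\d s,
\]
the integral converging because $\exp^{-L_{u}(t-s)}=\exp^{L_{u}(s-t)}$ decays for $s>t$. Pairing this with the forward variation-of-parameters estimate for $y$ (using \eqref{eq:estimate-e-Ls-X}--\eqref{eq:estimate-e-Ls-Xbeta}) gives a closed coupled system of integral inequalities for $\norm{y}_{X_\alpha}$ and $\norm{z}_{X_\alpha}$; choosing $\beta$ (hence $\eta$, hence $\delta$) small enough, a contraction/Gr\"onwall argument in a space of exponentially weighted functions yields $\norm{y(t)}_{X_\alpha}+\norm{z(t)}_{X_\alpha}\le c\,\exp^{-kt}$. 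This decay makes $\int_0^\infty\norm{T}_{X_0}\,\d s$ convergent, so $x(t)\to x_\infty$ exponentially; setting $u_\infty:=\bar u+x_\infty+\phi_{s}(x_\infty)+\phi_{u}(x_\infty)\in\mathcal E$ and combining the three estimates exactly as in \eqref{u(t)-u_inf_conclusion} gives \eqref{eq:exp-convergence-unstable}.

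The main obstacle is precisely this unstable block: the forward flow amplifies $z$, so one cannot bound it by a Gr\"onwall argument running forward in time, and the backward representation repairs this but is only legitimate once global existence and a uniform-in-time bound on $z$ are known---which in turn rely on the smallness we are trying to prove. I expect to resolve this circularity as in Theorem \ref{th:stable}: introduce the maximal time $t_{1}$ up to which $(\norm{x}_{X_0},\norm{y}_{X_\alpha},\norm{z}_{X_\alpha})$ stay in a fixed ball, run the coupled estimates with the finite-horizon backward formula $z(t)=\exp^{-L_{u}(t-t_{1})}z(t_{1})-\int_t^{t_{1}}\exp^{-L_{u}(t-s)}R_{u}\,\d s$ (where $\exp^{-L_{u}(t-t_{1})}$ decays since $t<t_{1}$), and show that the coordinates are driven strictly below the ball's radius, contradicting the maximality of $t_{1}$ unless $t_{1}=t^*=+\infty$. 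A secondary technical point is keeping $x(t)$ inside the domain $B_{X^{c}}(0,\rho_{0})$ of $\Phi$ for all time, which the exponential decay of $y,z$ guarantees once $\delta$ is small.
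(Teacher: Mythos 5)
Your proposal is correct, and its structural skeleton is the paper's own proof: the linearization of Remark \ref{rm:stability-0}, the coordinates $x=P^c(u-\bar u)$, $y=P^s(u-\bar u)-\phi_s(x)$, $z=P^u(u-\bar u)-\phi_u(x)$ with the normal form and the Lipschitz bounds \eqref{2.58}, the instability claim delegated to the Henry-type construction of Appendix \ref{app:unstable}, and the circularity broken exactly as in the paper: a maximal time $t_1$, ``improved'' bounds on $y,z$ obtained from the nearest equilibrium $u_*(t)$, and the finite-horizon backward formula $z(t)=\exp^{L_u(t_1-t)}z(t_1)-\int_t^{t_1}\exp^{L_u(s-t)}R_u\,\d s$, which is legitimate precisely because $\Re\,\sigma(L_u)<0$ makes $\exp^{L_u\tau}$ decay for $\tau\ge 0$.

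Where you genuinely diverge is the final step, extracting exponential decay of $h:=\norm{y}_{X_\alpha}+\norm{z}_{X_0}$ once global existence and uniform boundedness are in hand. The paper raises the coupled inequalities to $p$-th powers via H\"older, adds them to get $f\le g$ with $g$ involving $\int_0^\infty \exp^{-pb|t-s|}f(s)\,\d s$, and deduces decay by observing that $g$ solves a second-order linear ODE and comparing bounded solutions (see \eqref{2.81}--\eqref{eq:exp-decay-y-z-unstable}); this is the generalization of Pr\"uss's finite-dimensional argument \cite{Pruss_Mathias_book} that Remark \ref{rm:comaprison-PSZ09-unstable} singles out as the new ingredient relative to \cite{PSZ09}. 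Your route---viewing $h\le a+\mathcal{K}h$, where $a(t)=M_\alpha\exp^{-\gamma t}\norm{y_0}_{X_\alpha}$ and $\mathcal{K}$ is the positive integral operator with kernels $\exp^{-\gamma(t-s)}(t-s)^{-\alpha}$ for $s<t$ and $\exp^{-\omega(s-t)}$ for $s>t$, then contracting in an exponentially weighted sup norm---is a valid alternative and arguably more elementary: for $\beta$ small one has $\norm{\mathcal{K}}_{\mathcal B(L^\infty;L^\infty)}<1$, and $\mathcal{K}$ maps functions bounded by $C\exp^{-kt}$ to functions bounded by $q_kC\exp^{-kt}$ with $q_k<1$ whenever $0<k<\min\{\gamma,\omega\}$ is small, so iterating the inequality yields $h\le\sum_n\mathcal{K}^na\le C'\exp^{-kt}$. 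Two points to make explicit in a write-up: (i) the weighted norm of $h$ is not finite a priori, so the argument must be run as an iteration starting from the known $L^\infty$ bound (or on finite horizons), not as a bare fixed-point statement in the weighted space; (ii) $X^u$ need not be finite-dimensional---what the paper uses (Remark \ref{rm:spectral-projections-n-hyperbolic}), and all your group estimate $\norm{\exp^{L_ut}}\le c\exp^{-\nu t}$, $t\ge0$, requires, is that $\sigma_u(L)$ is a compact spectral set contained in the open left half-plane, so that $L_u$ is bounded and $X^u_0=X^u_1$ carry equivalent norms (which also makes your $\norm{z}_{X_\alpha}$ interchangeable with the paper's $\norm{z}_{X_0}$ in \eqref{2.58}).
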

\begin{proof}
    The proof of the instability property follows from \cite[Theorem 5.1.5]{henry}, under more general assumptions on $A$ and $F$. We reproduce its proof in Appendix \ref{app:unstable}, see Theorem \ref{th:unstable-correct-henry}, because some versions of \cite{henry} seem to have a typo in their proof. 

    It remains to prove the characterization of the long-time behaviour of solutions starting near a normally hyperbolic equilibrium, as stated in the second part of the theorem. We recall the decompositions \eqref{eq:decomposition-X-n-hyperbolic} for the spaces $X_0= X$ and $X_1= D(A)=D(L)$, respectively, and the corresponding decomposition \eqref{eq:decomposition-L-n-hyperbolic} for the operator $L$. For the rest of this proof, the spaces $X_j$ are endowed with the following norms    
    \begin{equation}\label{equivalent-norm}
        \norm{v}_{X_j}= \norm{P^cv}_{X_0} + \norm{P^sv}_{X_j} + \norm{P^uv}_{X_0}, \quad j=0, \alpha, 1.
    \end{equation}
    In addition, for $\ell \in \{c, u\}$, the spaces $X^\ell_j$ are equipped with the norm of $X^\ell_0$ (see Remark \eqref{rm:spectral-projections-n-hyperbolic}). 

    Similarly to the proof of Theorem \ref{th:stable}, we need a suitable reparametrization of the set of equilibria in a neighborhood of $\bar u$ (see Appendix \ref{sec:flattening-equilibria}). Specifically, let $\Phi\in C^1(B_{X^{c}}(0, \rho_{0}); X_1)$ be the mapping obtained in Lemma \ref{lem:re-parametrization}. Recall that 
    \[
    \Phi(0)=0,\qquad \{\Phi(x)+\bar u:\; x\in B_{X^{c}}(0, \rho_{0})\}=\mathcal E\cap W,
    \]
    where $W$ is a suitable neighborhood of $\bar{ u}$ in $X_1$. We define 
    \[
    \phi_{\ell} := P^\ell\Phi\in C^1(B_{X^{c}}(0, \rho_{0}); X^\ell_1)\qquad\text{ for $\ell \in \{s, u\}$}.  
    \]
    Since $\phi_{\ell}(0) = \phi_{\ell}'(0) = 0$, we take $\rho_0$ small enough so that 
    \begin{equation}\label{eq:norm-of-phi_ell'leq1}
    \sup_{w\in B_{X^{c}}(0, \rho_{0})}\lVert \phi '_{\ell}(w)\rVert_{\mathcal{B}(X^c; X^{1}_{\ell})} 
    <\min\left\{1,\frac{1}{C_\alpha}\right\} 
    \quad \mbox{for  } 
    \ell \in \{s, u\},
    \end{equation}
    where $C_\alpha>0$ is a constant for the embedding $X_{1} \hookrightarrow X_{\alpha}$, i.e. $\norm{v}_{X_\alpha}\le C_\alpha\norm{v}_{X_1}$, for all $v\in X_1$. We also rewrite \ref{equi1c} as the following system of equations
    \begin{equation}\label{split-equilibri-equ-unstable}
          \begin{aligned}
        &P^{c}G( x + \phi_{s}(x)+ \phi_u(x))=0, &\text{}\\
        &P^{\ell}G( x + \phi_{s}(x)+ \phi_u(x))= L_{\ell}\phi_{\ell}(x), \quad x \in B_{X^{c}}(0, \rho_{0}) \mbox{ and } \ell \in \{s, u\}.
          \end{aligned}
    \end{equation}
    Introducing the new variables 
    \begin{center}
    $x := P^{c}(u- \bar u),\quad y:= P^{s}(u- \bar u)-\phi_{s}(x)$, $\quad z:= P^{u}(u- \bar u)-\phi_{u}(x)$, 
    \end{center}
    and defining  
    \begin{align}\nonumber
   T(x,y,z) &:= P^{c}(G(x + y + z +\phi_{u}(x) + \phi_{s}(x))- G(x +\phi_{u}(x) + \phi_{s}(x))), &\text{}\\
    R_{\ell}(x,y,z) &:= P^{\ell}(G(x + y + z +\phi_{u}(x) + \phi_{s}(x))- G(x +\phi_{u}(x) + \phi_{s}(x)))\nonumber
    \\
    &\qquad\qquad\qquad\qquad\qquad\qquad\qquad\qquad - \phi '_{\ell}(x)T(x,y,z), \qquad \ell \in \{s,u\}, 
    \end{align}
    we obtain the following {\em normal form}\footnote{Since $T(x,0,0) = R_{\ell}(x,0,0)=0$, for all $x \in B_{X^{c}}(0, \rho_{0})$ and $ \ell \in \{s, u\}$, the equilibria set $\mathcal{E}$ of \eqref{eq:evolution0} near $\bar u$ is in one-to-one correspondence with the set $B_{X^{c}}(0, \rho_{0}) \times \{0\} \times \{0\} \subset X^{c} \times X^{s}_{1} \times X^{u}$ of equilibria of \eqref{decomposed equation-unstable}.
    } of \eqref{eq:evolution} around its normally hyperbolic equilibrium $\bar u$: 
    \begin{equation} \label{decomposed equation-unstable}
    \left\{\begin{aligned}
    \displaystyle
    &\frac{\d x}{\d t} = T(x,y,z), \qquad &&x(0)= x_{0}, \\
    &\frac{\d y}{\d t} + L_{s}y(t) = R_{s}(x,y,z), \qquad && y(0)= y_{0}, \\
    &\frac{\d z}{\d t} + L_{u}y(t) = R_{u}(x,y,z), \qquad && z(0)= z_{0}. \\
    \end{aligned}\right. 
    \end{equation}
    Since $G$ is a $C^{1}$ function from $X_{\alpha}$ to $X_0$, given $\eta > 0$ (to be chosen later) we can find $r=r(\eta)\leq {(3+2C_\alpha)}\rho_0$ 
    small enough such that
    \begin{align}\label{3.27}
          \lVert G(u_{1}) - G(u_{2}) \rVert_{X_0} \leq \eta \lVert u_{1} - u_{2} \rVert_{X_\alpha}
      \end{align}
      for all $u_{1} , u_{2} \in B_{X_{\alpha}}(0, r)$. Using  \eqref{3.27} along with the fact that $P^{c}$, $P^{s}$ and $P^{u}$ are bounded on $X_{0}$ and the bound \eqref{eq:norm-of-phi_ell'leq1}, we obtain the estimates  
    \begin{align}\label{2.58}
        \lVert T(x, y, z) \rVert_{X_0} , \lVert R_{\ell}(x, y, z) \rVert_{X_0} \leq \beta(\lVert y \rVert_{X_\alpha} + \lVert z \rVert_{X_0}),
    \end{align}
    for all $x \in  \overline{B_{X^{c}}(0, \hat\rho)}$ and $y \in \overline{B_{X^{s}_{\alpha}}(0, \hat \rho)}$, where {$\hat \rho=\frac{r}{3+2C_\alpha}$} 
    and $\beta = c_{2}\eta$ for some  absolute constant $c_{2}$.

    Let $\rho\in(0, \frac{\rho_0}{4}]$, and consider the unique solution $u$ of \eqref{eq:evolution} corresponding to $u_0 \in B_{X_{\alpha}}(\bar u,\delta)$ for some $\delta\in (0,\rho]$ to be determined later. 
    Let $t_{*}$ denote the maximum existence time of $u(t)$ 
    (and correspondingly of $(x(t), y(t), z(t))$ for the system \eqref{decomposed equation-unstable}). We have the following {\em blow-up criterion}: 
    \begin{itemize}
        \item[(i)] either $t_*=\infty$,  
        \item[(ii)] or $\lVert u(t) \rVert_{X_\alpha} \rightarrow \infty$ as $t \rightarrow t_{*}^-$.
    \end{itemize}
    Assume that part (b) of the theorem holds, that is, $t_{*} = \infty$ and 
    \[
    \dist_{X_{\alpha}}(u(t), \mathcal{E}) \le \rho\qquad\text{for all }t\in (0,\infty). 
    \]
    We first show that $u(t)\in \overline{B_{X_\alpha}(\bar u,3\rho)}$ for all $t\in [0,\infty)$ provided that $\delta=\rho$. Consider
    \begin{align*}
        t_{1} := t_{1}(x_{0},y_{0},z_{0}) :=  \sup \{t\in (0, \infty) \colon \lVert u(\tau) - \bar u \rVert_{X_\alpha} \leq 3\rho, \mbox{ for all }\tau \in [0,t]\},
    \end{align*}
    we then show that $t_1=\infty$. 
    
    For all $t \in [0, t_{1}]$, we write 
    \begin{equation}\label{eq:decomposition-c-s-u}
    u(t) = \bar u + x(t) + y(t) + z(t) + \phi_{s}(x(t)) + \phi_{u}(x(t)),
    \end{equation}
    and by \eqref{equivalent-norm}, we immediately obtain that 
    \begin{equation}\label{eq:estimate-t_1-x-unstable}
    \lVert x(t) \rVert_{X_0} \leq 3\rho.
    \end{equation}
    Using \eqref{eq:norm-of-phi_ell'leq1}, we get 
    \begin{align}
        \norm{y(t)}_{X_\alpha}&\le \norm{y(t)}_{X_\alpha}+\norm{x(t)}_{X_0}-C_\alpha\norm{\phi_{s}(x(t))}_{X_1}\nonumber
        \\
        &
        \le \norm{y(t)-\phi_{s}(x(t))}_{X_\alpha}+\norm{x(t)}_{X_0}\nonumber
        \\
        &\le \norm{u(t) - \bar u}_{X_\alpha}\le 3\rho. \label{eq:estimate-t_1-y-unstable}
    \end{align}  
    
    With an analogous reasoning, we also obtain that 
    \begin{equation}\label{eq:estimate-t_1-z-unstable}
    \lVert z(t) \rVert_{X_0} \leq 3\rho.
    \end{equation}

    Our next step is to improve the estimates \eqref{eq:estimate-t_1-y-unstable} and \eqref{eq:estimate-t_1-z-unstable}. Since $\mathcal{E}$ is a finite-dimensional manifold, for each $v \in {B_{X_{\alpha}}(\bar u, 3\rho)}$ there is $v_* \in \mathcal{E}$ such that $\dist_{X_{\alpha}}(v, \mathcal{E}) = \lVert v - v_* \rVert_{X_\alpha}$ and,  by triangle inequality, $v_*\in B_{X_{\alpha}}(\bar u, 4\rho)$. Thus, for each $t\in [0,t_1]$, there exists $u_*=u_*(t)\in \mathcal E\cap B_{X_{\alpha}}(\bar u, 4\rho)$ such that 
    \[
    \dist_{X_{\alpha}}(u(t), \mathcal{E}) = \lVert u(t) - u_*(t) \rVert_{X_\alpha},
    \]
    and $u_*(t)=\bar u+x_*(t)+\phi_s(x_*(t))+\phi_u(x_*(t))$. From \eqref{eq:decomposition-c-s-u}, it then follows that 
    \begin{align*}
    \norm{z(t)}_{X_0}&\leq \lVert x(t)-x_*(t) \rVert_{X_0} + \lVert z(t) \rVert_{X_0} -\lVert \phi_{u}(x(t)) -\phi_{u}(x_*(t)) \rVert_{X_0}
    \\
    &\le \lVert x(t) - x_*(t) \rVert_{X_0} + \lVert z(t) + \phi_{u}(x(t)) -\phi_{u}(x_*(t)) \rVert_{X_0}
    \\
    &\le \lVert u(t) - u_*(t) \rVert_{X_\alpha}=\dist_{X_{\alpha}}(u(t), \mathcal{E})\le \rho. 
    \end{align*}
    Therefore, we get the ``improved'' estimate 
    \begin{equation}\label{eq:improved-z-unstable}
    \lVert z(t) \rVert_{X_0} \leq \rho\quad\text{for all $t \in [0, t_{1}]$} 
    \end{equation}
    and, using the same argument, one shows also the ``improved'' estimate 
    \begin{equation}\label{eq:improved-y-unstable}
        \lVert y(t) \rVert_{X_\alpha} \leq \rho\quad\text{ for all $t \in [0, t_{1}]$}.   
    \end{equation}
    We prove that $t_1=\infty$ by showing that $\norm{u(t_1)-\bar u}_{X_\alpha}<3\rho$. Note that,  thanks to \eqref{eq:decomposition-c-s-u}, \eqref{equivalent-norm}, \eqref{eq:improved-z-unstable} and \eqref{eq:improved-y-unstable}, it is enough to show that 
    \begin{equation}\label{bound-on-x(t)}
        \lVert x(t) \rVert_{X_0} < c\rho \quad\mbox{ for all $t \in [0, t_{1}]$}
        \end{equation}
        for some constant $c\in (0,1)$.
   Using the variation of parameter formula to the second equation in \eqref{decomposed equation-unstable}, together with the properties of $L$, we find that 
    \begin{equation}\label{2.60}
    \lVert y(t) \rVert_{X_{\alpha}} \leq M_{\alpha} \exp^{-\gamma t} \lVert y_{0}\rVert_{X_{\alpha}} + \int_{0}^{t} M_{\alpha} \frac{\exp^{-\gamma (t - s)}}{(t-s)^{\alpha}} \lVert R_{s}(x(s), y(s), z(s))\rVert_{X_{0}}\; \d s, 
    \end{equation}
    where $\gamma > 0$ is such that $\sigma_s(L) = \sigma(L_s)  \subset (\gamma, \infty)$. Thanks to the estimates \eqref{eq:improved-z-unstable} and \eqref{eq:improved-y-unstable}, we can estimate the right-hand side of the latter displayed equation using \eqref{2.58}. Integrating the resulting estimate over the interval $(0,t_1)$, we obtain
    \begin{align}
    \int_{0}^{t_{1}} \lVert y(t) \rVert_{X_{\alpha}} \d t \leq \frac{M_{\alpha}}{\gamma} \lVert y_{0}\rVert_{X_{\alpha}} + M_{\alpha}\beta  \int_{0}^{t_{1}}\int_{0}^{t} \frac{\exp^{-\gamma (t - s)}}{(t-s)^{\alpha}}\left(\lVert y(s) \rVert_{X_\alpha} + \lVert z(s) \rVert_{X_0}\right)\; \d s \d t.
    \end{align}
    By Fubini's theorem and since $\displaystyle \int_{0}^{\infty} \dfrac{\exp^{-\gamma s}}{s^{\alpha}}\d s < \infty$, we have
    \begin{align}
        \int_{0}^{t_{1}} \lVert y(t) \rVert_{X_{\alpha}} \d t &\leq \frac{M_{\alpha}}{\gamma} \lVert y_{0}\rVert_{X_{\alpha}} + M_{\alpha}\beta  \int_{0}^{t_{1}}\int_{s}^{t_{1}} \frac{\exp^{-\gamma (t - s)}}{(t-s)^{\alpha}}(\lVert y(s) \rVert_{X_\alpha} + \lVert z(s) \rVert_{X_0})\; \d t \d s \nonumber
        \\
        &\leq \frac{M_{\alpha}}{\gamma} \lVert y_{0}\rVert_{X_{\alpha}} + M_{\alpha}\beta  \int_{0}^{t_{1}}\int_{s}^{\infty} \frac{\exp^{-\gamma (t - s)}}{(t-s)^{\alpha}}(\lVert y(s) \rVert_{X_\alpha} + \lVert z(s) \rVert_{X_0})\; \d t \d s \nonumber
        \\
         &\leq \frac{M_{\alpha}}{\gamma} \lVert y_{0}\rVert_{X_{\alpha}} + M_{\alpha}\beta  \int_{0}^{t_{1}}(\lVert y(s) \rVert_{X_\alpha} + \lVert z(s) \rVert_{X_0}) \; \d s.\label{2.65}
    \end{align}
    
    Let $\omega \in (0,\mathrm{inf} \sigma_u(-L)) \cap (0,1)$ be such that 
    \begin{equation}\label{eq:exp-decay-L_u}
    \lVert \exp^{L_{u}t} \rVert_{\mathcal B(X_0;X_0)} \leq c \exp^{-\omega t}\qquad\text{for all $t\in \R$. }
    \end{equation}
    Applying the variation of parameter formula to the third equation in \eqref{decomposed equation-unstable}, for all $t \in [0, t_{1}]$ we get
    \begin{align}
        \lVert z(t) \rVert_{X_0} &\leq \lVert\exp^{L_{u}(t_1-t)} z(t_{1})\rVert_{X_0} + \int_{t}^{t_{1}} \lVert\exp^{L_{u}(s - t)}R_{s}(x(s), y(s), z(s))\rVert_{X_0}\;
        \d s\nonumber 
        \\
        &\leq M_{\alpha} \exp^{-\omega (t_1-t)} \rho + M_{\alpha}\beta\int_{t}^{t_{1}} \exp^{-\omega(s - t)} (\lVert y(s) \rVert_{X_\alpha} + \lVert z(s) \rVert_{X_0}) \; \d s.\label{2.67}
    \end{align}
    Integrating the above equation over $(0,t_{1})$ and using Fubini's theorem,  we get
    \begin{align}
    \int_{0}^{t_{1}} \lVert z(t) \rVert_{X_0} \d t &\leq \frac{M_{\alpha}}{\omega} \rho   + M_{\alpha}\beta \int_{0}^{t_{1}} \int_{t}^{t_{1}} \exp^{\omega(t - s)} (\lVert y(s) \rVert_{X_\alpha} + \lVert z(s) \rVert_{X_0}) \; \d s \d t\nonumber 
    \\
    &\leq \frac{M_{\alpha}}{\omega} \rho   + M_{\alpha}\beta \int_{0}^{t_{1}} \int_{0}^{s} \exp^{\omega(t - s)} (\lVert y(s) \rVert_{X_\alpha} + \lVert z(s) \rVert_{X_0}) \; \d t \d s\nonumber 
    \\
    &\leq \frac{M_{\alpha}}{\omega} \rho   + \frac{M_{\alpha}\beta}{\omega} \int_{0}^{t_{1}} (\lVert y(s) \rVert_{X_\alpha} + \lVert z(s) \rVert_{X_0})\; \d s.\label{2.69}
    \end{align}
   Adding \eqref{2.65} and \eqref{2.69} side by side, and choosing $\eta$ in \eqref{3.27} small enough so that $\frac{M_{\alpha}\beta}{\omega}<1$, we obtain the following estimate: 
    \begin{align}\label{eq:estimate-int-t1-y-z}
    \int_{0}^{t_{1}} (\lVert y(s) \rVert_{X_\alpha} + \lVert z(s) \rVert_{X_0})\d s \leq \frac{2 M_{\alpha}}{\gamma} \lVert y_{0}\rVert_{X_\alpha} + \frac{2 M_{\alpha}}{\omega} \rho. 
    \end{align}
    
    Applying the variation of parameter formula to the first equation in \eqref{decomposed equation-unstable}, using \eqref{2.58} and \eqref{eq:estimate-int-t1-y-z}, we get 
    \begin{equation}
    \begin{aligned}
        \lVert x(t) \rVert_{X_0} &\leq \lVert x_{0}\rVert_{X_0} +  \int_{0}^{t} \lVert T(x(s), y(s), z(s))\rVert_{X_0} \;\d s
        \\
        &\leq \lVert x_{0}\rVert_{X_0} +  \beta \left(\frac{2 M_{\alpha}}{\gamma} \lVert y_{0}\rVert_{X_\alpha} + \frac{2 M_{\alpha}}{\omega} \rho\right)
    \end{aligned}
    \end{equation}
    for all $t \in [0, t_{1}]$. If we choose the initial data $(x_0,y_0,z_0)$ and $\eta$ in \eqref{3.27} sufficiently small so that
    \begin{equation}
    \begin{aligned}
         \lVert x_{0}\rVert_{X_0} +  \beta \left(\frac{2 M_{\alpha}}{\gamma} \lVert y_{0} \rVert_{X_\alpha} + \frac{2 M_{\alpha}}{\omega} \rho\right) \leq \frac{\rho}{6}. 
    \end{aligned}
    \end{equation}
    We have therefore shown \eqref{bound-on-x(t)} with $c=\frac{1}{6}$.
    We finally obtain that 
    \[
    \lVert u(t_{1}) -\bar u \rVert_{X_\alpha} \leq 3\lVert x(t_{1}) \rVert_{X_0} + \lVert y(t_{1}) \rVert_{X_\alpha} + \lVert z(t_{1}) \rVert_{X_0} \leq \frac{3\rho}{6} + 2\rho < 3\rho
    \]
    which contradicts the definition of $t_{1}$. Therefore, the unique solution $u$ of \eqref{eq:evolution} corresponding to $u_0 \in B_{X_{\alpha}}(\bar u,\delta)$, with $\delta \leq \rho$, exists globally and satisfies 
    \begin{equation}\label{eq:uniform-u-3rho-unstable}
    u(t)\in \overline{B_{X_{\alpha}}(\bar u, 3\rho)}\quad\text{ for all }t\in [0,\infty).
    \end{equation}

    To conclude our proof, we need to prove the exponential convergence \eqref{eq:exp-convergence-unstable}. For all $t > 0$, we apply the variation of parameter formula once more together with the properties of $L$ and \eqref{2.58} to get
    \begin{equation}\label{2.62}
    \lVert y(t) \rVert_{{X_\alpha}} \leq M_{\alpha} \exp^{-\gamma t} \lVert y_{0}\rVert_{X_\alpha} + \beta M_{\alpha} \int_{0}^{t} \frac{\exp^{-\gamma (t - s)}}{(t-s)^{\alpha}}(\lVert y(s) \rVert_{X_\alpha} + \lVert z(s) \rVert_{X_0})\; \d s.
    \end{equation}
    Since $\lVert z(t) \rVert_{X_0} \leq 3\rho$ for all $t\ge 0$, by the variation of parameter formula applied to $z(t)$ along with \eqref{eq:exp-decay-L_u} and and \eqref{2.58}, we obtain 
       \begin{align}
        \lVert z(t) \rVert_{X_0} \leq  \beta M_{\alpha} \int_{t}^{\infty} \exp^{\omega(t - s)}(\lVert y(s) \rVert_{X_\alpha} + \lVert z(s) \rVert_{X_0}) \; \d s.
    \end{align}
    
    Choose $b >0$ and $p' > 1$ such that $\gamma - b > 0$, $\omega - b > 0$, and $\alpha p' < 1$. Applying H\"{o}lder's inequality to the right-hand side of \eqref{2.62}, and using the fact that $ \displaystyle\int_{0}^{\infty} \dfrac{\exp^{-b s}}{s^{\alpha}}\d s < \infty$ for all $\alpha\in (0,1)$ and $b>0$, we get
    \begin{align*}
    \lVert y(t) \rVert_{{X_\alpha}}
    &\leq M_{\alpha} \exp^{-\gamma t} \lVert y_{0}\rVert_{X_\alpha} \\
    + \beta  M_{\alpha} &\left(\int_{0}^{t} \frac{\exp^{-(\gamma - b)p' (t - s)}}{(t-s)^{p' \alpha}}\; \d s\right)^{\frac{1}{p'}}
    \left(\int_{0}^{t} \exp^{- bp (t - s)} (\lVert y(s) \rVert_{X_\alpha} + \lVert z(s) \rVert_{X_0})^{p}\; \d s\right)^\frac{1}{p}
    \\
    &\leq M_{\alpha} \exp^{-\gamma t} \lVert y_{0}\rVert_{X_\alpha} + \beta M_{\alpha} \left(\int_{0}^{t} \exp^{- bp (t - s)} \left(\lVert y(s) \rVert_{X_\alpha} + \lVert z(s) \rVert_{X_0}\right)^{p}\; \d s\right)^\frac{1}{p}.
    \end{align*} 
    Therefore, we obtain 
    \begin{align}\label{2.77}
    \lVert y(t) \rVert_{X_\alpha}^{p} \leq M_{\alpha}^{p} \exp^{-p\gamma t} \lVert y_{0}\rVert_{X_\alpha}^{p} + (\beta M_{\alpha})^{p} \int_{0}^{t} \exp^{- bp (t - s)} (\lVert y(s) \rVert_{X_\alpha}^{p} + \lVert z(s) \rVert^{p}_{X_0})\; \d s, 
    \end{align}
    and in a similar fashion, 
    \begin{align}\label{2.78}
    \lVert z(t) \rVert^{p}_{X_0} \leq (\beta M_{\alpha})^{p} \int_{t}^{\infty} \exp^{ bp (t - s)} (\lVert y(s) \rVert_{X_\alpha}^{p} + \lVert z(s) \rVert^{p}_{X_0})\; \d s.
    \end{align}
    Adding \eqref{2.77} and \eqref{2.78} side by side, and using the fact that $\gamma > b$, we find   
    \begin{align}\label{eq:estimate-y-z^p}
     \lVert y(t) \rVert_{X_\alpha}^{p} + \lVert z(t) \rVert^{p}_{X_0} \leq M_{\alpha}^{p}\left( \exp^{-pb t} \lVert y_{0}\rVert_{X_\alpha}^{p} + \beta^{p} \int_{0}^{\infty} \exp^{- bp \lvert t-s \rvert} (\lVert y(s) \rVert_{X_\alpha}^{p} + \lVert z(s) \rVert^{p}_{X_0}\; \d s\right)
     \end{align}
     for all $t \geq 0$. If we set  
     \begin{equation*}
     \begin{aligned}
        f(t):=& \lVert y(t) \rVert_{X_\alpha}^{p} + \lVert z(t) \rVert^{p}_{X_0},
        \\
        g(t) :=&  M_{\alpha}^{p} \exp^{-pb t} \lVert y_{0}\rVert_{X_\alpha}^{p} + (\beta M_{\alpha})^{p} \int_{0}^{\infty} \exp^{- bp \lvert t-s \rvert} (\lVert y(s) \rVert_{X_\alpha}^{p} + \lVert z(s) \rVert^{p}_{X_0})\;\d s,
    \end{aligned}
    \end{equation*}
    the estimate \eqref{eq:estimate-y-z^p} reads simply as $f(t)\le g(t)$ for all $t \geq 0$. 
    Thanks to \eqref{eq:decomposition-c-s-u} and \eqref{eq:uniform-u-3rho-unstable}, we claim that $g(t)$ is the bounded positive solution of the linear non-homogeneous second-order differential equation
    \begin{align}\label{2.81}
        -\frac{\d ^{2}g}{\d t^{2}} + (pb)^2 g(t) = 2pb(\beta M_{\alpha})^{p}f(t).
    \end{align}
    We subtract the term $2pb(\beta M_{\alpha})^{p}g(t)$ from both sides of the above equation and 
    we get 
    \begin{align}\label{2.82}
    -\frac{\d ^{2}g}{\d t^{2}} + b_{1}^2 g(t) = 2b_{1}(\beta M_{\alpha})^{p}f_{1}(t),
    \end{align}
    where we have taken $\eta$ in \eqref{3.27} so that $\beta=c_2\eta$ satisfies $(pb)^{2} - 2pb(\beta M_{\alpha})^{p} > 0$, and we have denoted 
    \[\begin{split}
        &b_{1} := ((pb)^{2} - 2pb(\beta M_{\alpha})^{p})^{\frac{1}{2}}
        \\
        &f_{1}(t) := \frac{pb}{b_1}(f(t)-g(t)). 
    \end{split}\]
    By the superposition principle, every bounded solution of \eqref{2.82} has the following representation for all $t \geq 0$
    \begin{align}
        g(t) = C \exp^{-b_{1} t} + (\beta M_{\alpha})^{p} \int_{0}^{\infty} \exp^{- b_1 \lvert t-s \rvert} f_{1}(s)\; \d s
    \end{align}
    where $C>0$ is an absolute constant. Since $f_{1} \leq 0$, we conclude that 
    \begin{align}\label{eq:exp-decay-y-z-unstable}
    \lVert y(t) \rVert_{X_\alpha}^{p} + \lVert z(t) \rVert^{p}_{X_0} = f(t) \leq g(t) \leq C \exp^{-pb_{1} t},
    \end{align}
    for all $t \geq 0$, and this shows that the norms $\lVert y(t) \rVert_{X_\alpha}$, $\lVert z(t) \rVert_{X_0} \rightarrow 0$ exponentially fast as $t \rightarrow \infty$. 
    
    The exponential decay just proved along with \eqref{2.58} imply that the following limit exists 
    \begin{equation*}
    x_{\infty} := \lim\limits_{t \to \infty} x(t) = x_{0} + \int_{0}^{\infty} T(x(s), y(s), z(s)) \d s,\end{equation*}
    and the following decay holds for all $t\ge 0$
    \begin{equation*}
    \begin{aligned}
        \lVert x(t) - x_{\infty} \rVert_{X_0} &\leq \int_{t}^{\infty} \lVert T(x(s), y(s), z(s))\rVert_{X_0} \;\d s 
        \\
        &\leq \beta \int_{t}^{\infty} (\lVert y(s) \rVert_{X_\alpha} + \lVert z(s) \rVert_{X_0}) \d s\\ &\leq C\beta \exp^{-\frac{bt}{p}}. 
    \end{aligned}
    \end{equation*}
    Since $\phi_\ell\in C^1(B_{X^{c}}(0, \rho_{0}); X^\ell_1)$ for $\ell \in \{s, u\}$, and \eqref{eq:norm-of-phi_ell'leq1} holds, we also have that 
    \[
    \begin{split}
        &\lVert \phi_{s}(x(t)) - \phi_{s}(x_{\infty}) \rVert_{X_\alpha} \leq \lVert x(t) - x_{\infty} \rVert_{X_0} \leq C\beta \exp^{-\frac{bt}{p}}
        \\
        &\lVert \phi_{u}(x(t)) - \phi_{u}(x_{\infty}) \rVert_{X_0} \leq \lVert x(t) - x_{\infty} \rVert_{X_0} \leq C\beta \exp^{-\frac{bt}{p}}.
    \end{split}
    \]
    Set $u_\infty:=\bar u+x_\infty+\phi_s(x_\infty)+\phi_u(x_\infty)$. By the decomposition \eqref{eq:decomposition-c-s-u}, the definition of the norm $\norm{\cdot}_{X_\alpha}$ from \eqref{equivalent-norm}, and all the above exponential decays, we conclude that 
    \begin{equation*}
    \begin{aligned}
    \norm{u(t) -\bar u}_{X_\alpha}&
    \le \norm{x(t)-x_\infty}_{X_0}+\norm{y(t)}_{X_\alpha}+\norm{z(t)}_{X_0}
    \\
    &\quad +\norm{\phi_s(x)-\phi_s(x_\infty)}_{X_\alpha}
    +\norm{\phi_u(x)-\phi_u(x_\infty)}_{X_0}
    \\
    &\leq c\, \exp^{-kt},
    \end{aligned}
    \end{equation*}
    for some positive constants $c$ and $k$, and this shows \eqref{eq:exp-convergence-unstable}.
\end{proof}

\begin{remark}\label{rm:comaprison-PSZ09-unstable}
The proof of the above theorem relies on the following two fundamental ideas:
\begin{enumerate}
    \item Obtaining a uniform bound of the solution $u$ to \eqref{eq:evolution0} 
    which depends only on the size of the initial data (see \eqref{eq:uniform-u-3rho-unstable}).
    \item Explicit exponential decays for the ``stable'' and ``unstable components'' of $u$, namely for $y(t)$ and $z(t)$, respectively (see \eqref{eq:exp-decay-y-z-unstable}).
\end{enumerate}
For part (1), the proof follows similar steps as in the proof of \cite[Theorem 6.1]{PSZ09}. The proof of (2) is completely different than its counterpart in \cite[Theorem 6.1]{PSZ09},  and it is a suitable generalization to the infinite dimensional setting of a similar results proved in the context of ordinary differential equations in $\R^n$ \cite[Satz 10.5.1.]{Pruss_Mathias_book}.    
\end{remark}

Note that the conditions (a) and (b) in Theorem \ref{th:unstable} are not mutually exclusive, as shown in the following finite dimensional example.
\begin{example}
Consider the following system of nonlinear 1st-order ODEs
     \begin{equation}\label{finite-dim-example}
\left\{\begin{aligned}
&\frac{\d x}{\d t}= 0,
\\
&\frac{\d y}{\d t}=  y^2 - y.
\\
\end{aligned}\right.
\end{equation}
The equilibria set of \eqref{finite-dim-example} is $\{(x,0): x\in \R\} \cup \{(x,1): x\in \R\}$ and 
$L := \begin{pmatrix}
0 & 0 \\
0 & 1
\end{pmatrix}$ is the linearization around the equilibrium $(0,1)$ according to Remark \ref{rm:stability-0}. It is easy to verify that both conditions \eqref{ua1} and \eqref{ua2} of Theorem \ref{th:unstable} are satisfied. Nevertheless, 
if the initial condition is \( (x(0), y(0)) = (0, c) \), where \( 0 < c < 1 \), then the corresponding solution \( (x(t), y(t)) \) converges to \( (0,0) \) exponentially fast as \( t \to \infty \).
\end{example}
\begin{remark}\label{rm:comparison-unstable}
    For the proof of the first part of Theorem \ref{th:unstable} (concerning the instability property for normally hyperbolic equilibria), in \cite[Theorem 2.3.]{GaldiMazzone21}, the authors assume a growth condition on the nonlinear operator $G$, i.e. $\norm{G(u)}\le c\norm{u}_{X_\alpha}^p$ for some $p>1$ (see \cite[conditions (H4)$_2$]{GaldiMazzone21}). As our proof shows, such a condition is not required. 
\end{remark}
Combining the previous two theorems, we obtain the following result.
\begin{corollary}\label{long time behaviour}
     Let $0 < \alpha < 1$, and consider $(A,F)$ in \eqref{eq:evolution} so that hypotheses \ref{H1} and \ref{H2} are satisfied. Let $u_*$ be a normally stable or normally hyperbolic equilibrium of \eqref{eq:evolution}. Assume that,  
     for each sufficiently small $\rho > 0$, there exists $0 < \delta \leq \rho$ such that the unique solution $u(t)$  of \eqref{eq:evolution0}, with $u(0) \in B_{X_{\alpha}}(u_{*}, \delta)$, exists for all $t \in \mathbb R^{+}$ and $\dist_{X_{\alpha}}(u(t), \mathcal{E}) \leq \rho$ for all $t\in \mathbb R^{+}$. Then there exists $u_\infty\in \mathcal E $ such that $\|u(t)-u_\infty\|_{X_\alpha}<c \exp^{-kt}$ for some positive constants $c$ and $k$.
\end{corollary}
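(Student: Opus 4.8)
The plan is to split the argument according to the two possibilities for $u_*$ in the hypothesis—normally stable and normally hyperbolic—and to invoke Theorem \ref{th:stable} in the former case and Theorem \ref{th:unstable} in the latter. The role of the standing assumption (global existence together with $\dist_{X_\alpha}(u(t),\mathcal{E})\le\rho$ for all $t$) is essentially to select, in the hyperbolic case, the ``convergent'' branch of the dichotomy supplied by Theorem \ref{th:unstable}.

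First I would treat the case in which $u_*$ is normally stable. Since the base space here is $Y=X_\alpha$ (recall $F\in C^1(X_\alpha;X)$ from \ref{H2}), Theorem \ref{th:stable} gives that $u_*$ is exponentially stable in the sense of Definition \ref{def:asymptotic-stability}. Fixing any $\varepsilon>0$, this furnishes a threshold $\gamma>0$ so that every solution with $\lVert u(0)-u_*\rVert_{X_\alpha}<\gamma$ exists globally and, by part (iii) of Definition \ref{def:asymptotic-stability}, converges to some $u_\infty\in\mathcal{E}$ with $\lVert u(t)-u_\infty\rVert_{X_\alpha}<k\exp^{-\beta t}$ for $t>t_\infty$. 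Choosing $\rho$ small enough that the $\delta\le\rho$ furnished by the corollary's hypothesis satisfies $\delta<\gamma$, the solutions under consideration fall into this regime and the desired estimate holds at once; in fact the assumption that $u$ stays within distance $\rho$ of $\mathcal{E}$ is not needed in this case.

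Next I would treat the normally hyperbolic case, where the assumption does real work. Fix $\rho>0$ small. Theorem \ref{th:unstable} provides a radius $\delta_1=\delta_1(\rho)\in(0,\rho]$ such that any solution with $u(0)\in B_{X_\alpha}(u_*,\delta_1)$ satisfies at least one of the alternatives \ref{ua1} or \ref{ua2}. The corollary's hypothesis provides a radius $\delta_2\in(0,\rho]$ such that solutions with $u(0)\in B_{X_\alpha}(u_*,\delta_2)$ exist for all $t>0$ and obey $\dist_{X_\alpha}(u(t),\mathcal{E})\le\rho$ for every $t$. Setting $\delta:=\min\{\delta_1,\delta_2\}$, any solution starting in $B_{X_\alpha}(u_*,\delta)$ simultaneously obeys the dichotomy and remains within distance $\rho$ of $\mathcal{E}$ for all time. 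The latter property directly contradicts alternative \ref{ua1}, which would require $\dist_{X_\alpha}(u(t_0),\mathcal{E})>\rho$ at some $t_0$; hence alternative \ref{ua2} must hold, yielding $t^*=+\infty$ and the exponential convergence $\lVert u(t)-u_\infty\rVert_{X_\alpha}<c\,\exp^{-kt}$ to an equilibrium $u_\infty\in\mathcal{E}$, as claimed.

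The only delicate point I anticipate is the bookkeeping of the two thresholds $\delta_1$ and $\delta_2$: Theorem \ref{th:unstable} and the corollary's hypothesis each supply their own admissible ball radius, and one must intersect them via the minimum to guarantee that a single solution is covered by both statements before the dichotomy can be resolved. Apart from this reconciliation, the proof is a purely logical combination of the two preceding theorems and requires no additional estimates.
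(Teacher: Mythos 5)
Your proposal is correct and is exactly the argument the paper intends: the corollary is stated there with no separate proof beyond the remark that it follows by ``combining the previous two theorems,'' i.e.\ applying Theorem \ref{th:stable} in the normally stable case and resolving the dichotomy of Theorem \ref{th:unstable} in the normally hyperbolic case, where the standing hypothesis $\dist_{X_\alpha}(u(t),\mathcal{E})\le\rho$ rules out alternative \ref{ua1}. Your explicit handling of the two radii via $\delta=\min\{\delta_1,\delta_2\}$ is a correct (and welcome) spelling-out of the quantifier bookkeeping the paper leaves implicit.
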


We conclude this section with a remark about our choice of the topology for the convergences in our theorems. 
\begin{remark}\label{rm:X_alpha-X_p}
The limiting conditions in Theorem \ref{th:stable}, Theorem \ref{th:unstable} and Corollary \ref{long time behaviour} hold in the topology $X_\alpha$ of the domains of the fractional powers of $A$ (recall equations \eqref{eq:X_alpha} and \eqref{eq:interpolation-X_alpha}). This is the ``natural'' topology when dealing with {\em mild solutions} of \eqref{eq:evolution}, i.e., for solutions satisfying the variation of parameter formula 
\[
u(t)=\exp^{-tA}u_0+\int^t_0\exp^{-(t-s)A}F(u(s))\;\d s.
\]
If $A$ admits bounded imaginary powers, then we can use \emph{complex} interpolation to write
$X_\alpha=[X,X_1]_\alpha$, see \cite[Theorem 4.17.]{MR3753604}. In the applications for which the latter characterization does not hold, it may be difficult to characterize $X_\alpha$ in terms of classical functional spaces like Sobolev or Bessel potential spaces. In such situation, we recall the following embeddings in term of \emph{real} interpolation spaces (see \cite[Proposition 4.7., Proposition 1.4.]{MR3753604}):
\[
(X,X_1)_{1-\frac{1}{q},q}\hookrightarrow X_\alpha\hookrightarrow (X,X_1)_{1-\frac{1}{p},p}\quad\text{for }\:\alpha\in (0,1) \text{ and }\: 1-\frac 1p <\alpha<1-\frac 1q. 
\]
In practical applications (say when $X$ and $X_1$ are Lebesgue and Sobolev spaces, respectively), those real interpolation spaces can be characterized in terms of Besov spaces, and then our convergences can be re-stated in those topologies. 
\end{remark}
\section{An application: long-time behaviour of the motions of a fluid-filled heavy rigid body}\label{sec:application}

In this section, we will apply the stability results obtained in Section \ref{sec:main-thm} to characterize the long-time dynamics of a system constituted by a heavy rigid body with an interior cavity completely filled by a viscous incompressible fluid. 

We consider the physical system $\mathscr{S}$ constituted by a rigid body $\B$ with an interior cavity completely filled by a viscous incompressible Newtonian fluid $\L$. Mathematically, 
$\B$ occupies the bounded domain $\Omega_B\subset\R^3$ enclosed between two closed smooth surfaces\footnote{Indeed, it is enough that $\Gamma_2$ is of class $C^3$. } $\Gamma_1$ and $\Gamma_2$ with $\Gamma_1\cap\Gamma_2=\emptyset$. The fluid occupies the bounded domain $\Omega\subset\R^3$ with boundary $\partial \Omega:=\Gamma_2$. 

\begin{figure}
\begin{center}
    \psfrag{1}{$\e_1$}
    \psfrag{2}{$\e_2$}
    \psfrag{3}{$\e_3$}
    \psfrag{g}{$\g$}
    \psfrag{B}{$\Omega_B$}
    \psfrag{L}{$\Omega$}
    \psfrag{O}{$\O$}
    \psfrag{G}{$\G$}
    \includegraphics[width=.7\textwidth]{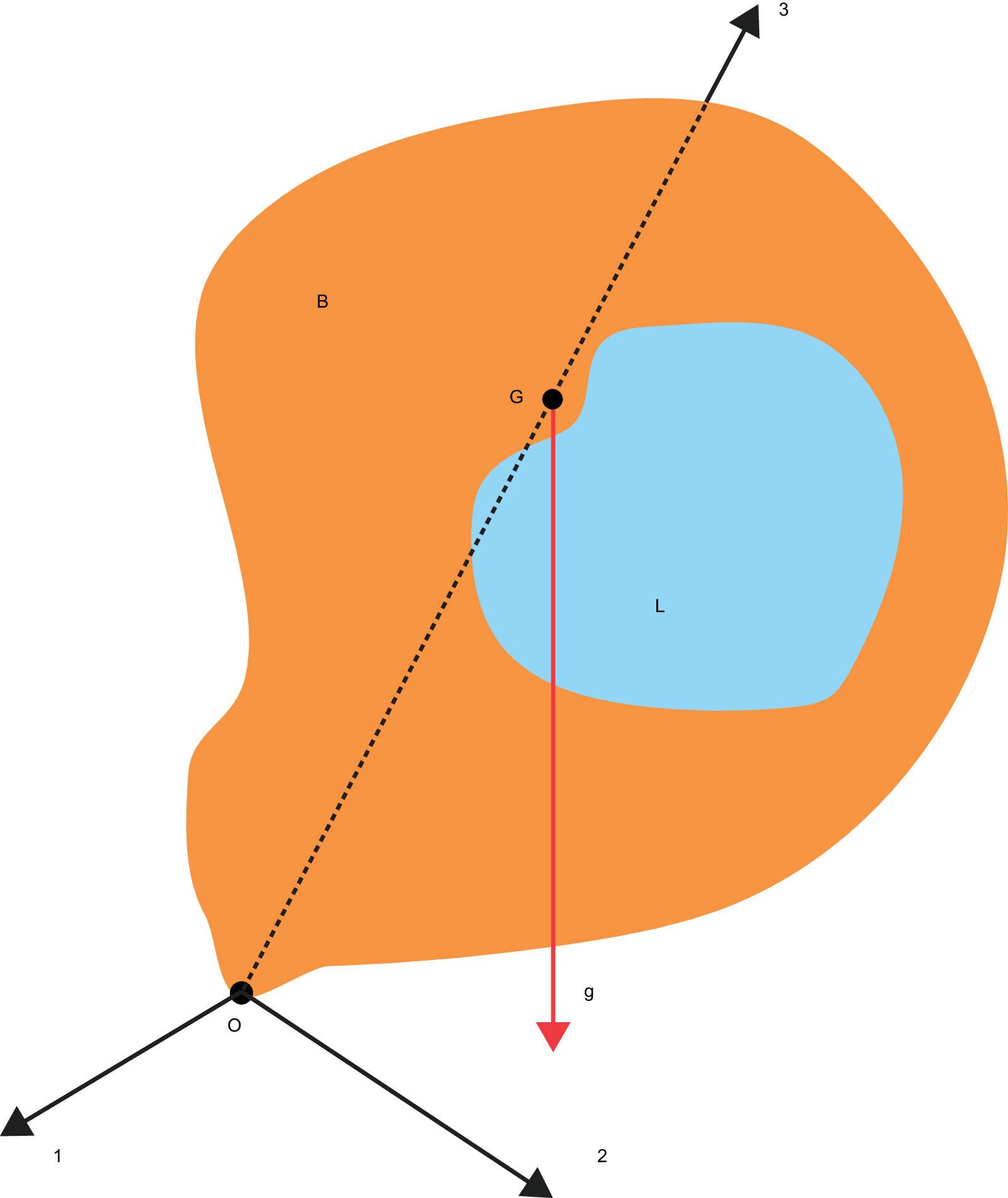}
    \caption{ A fluid-filled heavy solid}\label{fig:top}
\end{center}
\end{figure}
The motion of $\mathscr{S}$ occurs under the action of the gravity $\g$, while keeping one of its points, denoted by $\O$, fixed at all times (with respect to an inertial frame, see Figure \ref{fig:top}). As customary for problems involving moving domains\footnote{As the motions of $\B$ and $\L$ are unknowns of the problem, the domains occupied by the fluid and the solid are time-independent, and their time-evolution is also unknown. }, we choose to write the equations of motion with respect to a moving frame $\{\O;\e_1,\e_2,\e_3\}$ with origin at the fixed point $\O$ and axes directed along the {\em principal axes of inertia} of $\mathscr{S}$. To be more precise, let $\I$ denote the inertia tensor of $\mathscr{S}$ with respect to $\O$; this is a symmetric, positive definite tensor. Then, $\{\e_1,\e_2,\e_3\}$ forms an orthonormal basis of $\R^3$ of eigenvectors of $\I$ with corresponding eigenvalues $\lambda_1$, $\lambda_2$ and $\lambda_3$. We may refer to $\{\O;\e_1,\e_2,\e_3\}$ as {\em principal frame of inertia}. Displacements with respect to $\O$ will be denoted by $x$. In order to simplify the mathematical model of the physical problem, we make the following assumptions:
\begin{enumerate}[label=(\roman*)]
\item\label{hp:physics-center} The center of mass $\G$ of $\mathscr{S}$ belong to the axis passing through $\O$ along $\e_3$.
\item\label{hp:physics-G} $\G\neq \O$.
\item\label{hp:physics-principal} The principal frame of inertia $\{\O;\e_1,\e_2,\e_3\}$ is oriented so that the only non-zero coordinate $\ell$ of $\G$ is positive. 
\item\label{hp:physics-lambda} Geometry and mass distribution with $\mathscr{S}$ is such that $\{\lambda_1,\lambda_2,\lambda_3\}$ do not satisfy the following condition $\lambda_1=\lambda_2\ne \lambda_3$. 
\end{enumerate}
Therefore, with respect to the principal frame of inertia $\{\O;\e_1,\e_2,\e_3\}$, the equations governing the motion of  
$\mathscr{S}$ are given by (see \cite{Ma2})
\begin{equation}\label{eq:motion}
\left\{
\begin{aligned}
   &\div v=0\qquad&&\text{in }\Omega\times(0,T),
   \\
   &\partial_t v+\dot\omega\times x+v_i\partial_{x_i}v+2\omega\times v=\nu\Delta v-\nabla p+g\gamma \qquad&&\text{in }\Omega\times(0,T),
   \\
   &\I\cdot(\dot\omega-\dot \a)+\omega\times\I\cdot(\omega-\a)=\beta^2\e_3\times \gamma \qquad &&\text{in }(0,T),
   \\
   &\dot\gamma+\omega\times\gamma=0\qquad &&\text{in }(0,T),
   \\
   &v=0\qquad&&\text{on }\partial \Omega\times (0,T). 
\end{aligned}
\right.    
\end{equation}
In the above equations, $v$, $\rho$, and $\nu=\mu/\rho$ denote the fluid velocity relative to the solid, (constant) density, and coefficient of kinematic viscosity (with $\mu>0$ shear viscosity coefficient), respectively. The vector field $\omega$ denotes the angular velocity of $\B$, $\g:=g\gamma$ is the acceleration of gravity, with 
\begin{equation}\label{eq:gamma}
    |\gamma|=1\qquad\text{for all }t\ge0. 
\end{equation}
We note that the direction of the gravity $\gamma$ is a time-dependent (unknown) vector because the equations of motions are written with respect a non-inertial, moving frame. In addition, we set 
\begin{equation}\label{eq:beta&a}
   \beta^2:=Mg\ell,\qquad \a:=-\I^{-1}\cdot\int_\Omega \rho\,x\times v\; dx, \qquad p:=\frac1\rho\mathsf{p}-\frac 12 |\omega\times x|^2,
\end{equation}
where $M$ is the total mass of $\mathscr{S}$, and $\mathsf{p}$ is the fluid pressure, i.e., the Lagrangian multiplier due to the fluid incompressibility condition. In \eqref{eq:motion} and throughout the text, we use Einstein summation convention over repeated indices.

Equations \eqref{eq:motion}$_{1,2}$ are the Navier-Stokes equations, with additional terms given by Coriolis and centrifugal forces. These additional terms arise because the equations are written with respect to a moving frame and the chosen fluid variable is the fluid velocity relative to the solid. Equation \eqref{eq:motion}$_3$ is the balance of angular momentum of the whole system $\mathscr{S}$ with respect to the fixed point $\O$. Equation \eqref{eq:motion}$_4$ is a Poisson-type equation for the direction of the gravity $\gamma$; this ensures that gravity remains a constant vector in an inertial frame of reference. The boundary condition \eqref{eq:motion}$_5$ is the so-called {\em no-slip} boundary condition. This homogeneous Dirichlet condition physically means that the cavity of the solid is impermeable, and fluid particles adhere to the cavity during the motion. 

Our next objective is to rewrite \eqref{eq:motion} as the evolution equation \eqref{eq:evolution} on a Banach space $X$. Let $p\in (1,\infty)$ and set 
\[
L^p_\sigma(\Omega):=\{v\in L^p(\Omega):\; \div v=0\quad\text{in }\Omega,\ v\cdot \n=0\quad\text{on }\partial \Omega\}, 
\]
where the divergence condition holds in the sense of distributions, while the boundary condition holds in the weak sense. Here, and throughout the text, $\n$ denotes the unit, outward normal to $\Omega$. We then define
\begin{equation}\label{eq:def-X}
X:=\{u:=(v,\omega,\gamma)\in L^p_\sigma(\Omega)\times\R^3\times\R^3\},
\end{equation}
with norm 
\[
\norm{u}_0:=\left(\norm{\rho^{1/p}v}_{L^p(\Omega)}^2+\omega\cdot\I\cdot \omega+|\gamma|^2\right)^{1/2}. 
\]
When $p=2$, $X$ is a Hilbert space with the inner product
\[
\langle u_1,u_2\rangle:=\int_\Omega\rho\, v_1\cdot v_2\; dx+\omega_1\cdot\I\cdot\omega_2+\gamma_1\cdot\gamma_2,
\]
and associated norm $\norm{u}_0$ defined above (with $p=2$). We also introduce the Banach spaces 
\begin{align}
\label{eq:def-X1}
X_1&:=\left(H^2_p(\Omega)\cap {_0}H^1_p(\Omega)\cap L^p_\sigma(\Omega)\right)\times\R^3\times\R^3,
\\
\label{eq:def-Hsq0sigma}
_{0}H^s_{p,\sigma}(\Omega)&:=\left\{\begin{aligned}
&\{u\in H^s_p(\Omega)\cap L^q_\sigma(\Omega):\; u=0\text{ on }\partial \Omega\}\quad&&\text{ if }s>1/p,
\\
&H^s_p(\Omega)\cap L^p_\sigma(\Omega)\quad &&\text{ if }s\in [0,1/p),
\end{aligned}\right.
\end{align}
and norms
\[\begin{aligned}
    \norm{u=(v,\omega,\gamma)}_{X_1}&:=\norm{v}_{H^2_p(\Omega)}+|\omega|+|\gamma|,
    \\
    \norm{v}_{_{0}H^s_{p,\sigma}(\Omega)}&:=\norm{v}_{H^s_{p}(\Omega)},
\end{aligned}
\]
respectively. Consider the operators: 
\begin{equation}\label{eq:operators}
\begin{split}
&K_1:\; v\in L^p_\sigma(\Omega)\mapsto K_1v:=-\I\cdot\a\in \R^3
\\
&K_2:\; \omega\in \R^3\mapsto K_2\omega:=\P(\omega\times x)\in L^p_\sigma(\Omega)
\\
&E:\;
u=(v,\omega,\gamma)\in X\mapsto\left(v+K_2\omega,\I\cdot\omega+K_1v,\gamma\right)\in X,
\\
&\tilde A:\; u=(v,\omega,\gamma)\in D(A):=X_1\mapsto \tilde Au:=(-\nu\P\Delta v,\omega,\gamma)\in X,
\\
&\tilde F:\; u=(v,\omega,\gamma)\in D(F):= {_{0}H}^{2\alpha}_{p,\sigma}(\Omega)\times \R^3\times\R^3\subset X
\\
&
\mapsto  \tilde F(u):=\left(\P\left(-v_i\partial_{x_i}v-2\omega\times v\right),\omega -\omega\times\I\cdot(\omega-\a)+\beta^2\e_3\times\gamma,\gamma-\omega\times \gamma\right)\in X,
\end{split}
\end{equation}
where $\P$ denotes the Helmholtz projection of $L^p(\Omega)$ onto $L^p_\sigma(\Omega)$, whereas $I$ identifies the identity operator. We note that both $K_1$ and $K_2$ are bounded operators with finite rank. In particular, $K_2$ is compact on $L^p_\sigma(\Omega)$. So, $E$ is a compact perturbation of the operator
\[
u\in X\mapsto (v,\I \cdot \omega, \gamma)\in X. 
\]
Our claim is that $E$ is invertible. By Fredholm theory, it is enough to show that $E$ is injective. Let $u=(v,\omega,\gamma)\in X$ be such that $Eu=0$. Then, $v=-\P(\omega\times x)$ and it is also a smooth function. In particular, $v\in L^2_\sigma(\Omega)$, and 
\[
\begin{split}
0&=\langle Eu,u\rangle=\rho\norm{v}_{L^2(\Omega)}^2+\int_\Omega \rho v\cdot(\omega\times x)\; dx+
\I\cdot(\omega-\a)\cdot\I\cdot\omega+\gamma\cdot\gamma
\\
&=\rho\norm{v}_{L^2(\Omega)}^2+2\norm{\P(\omega\times x)}_{L^2(\Omega)}^2+
|\I\cdot\omega|^2+|\gamma|^2.
\end{split}\]
The latter implies that $v=\omega=\gamma=0$. Also, it has been shown in \cite[Section 6.2.3]{KoKr} that $E$ is positive definite when $p=2$. 

The nonlinear operator $\tilde F$ is bilinear and bounded in the given domain for 
\begin{equation}\label{eq:range-alpha}
\max\left\{\frac 12,\frac{3}{2p}\right\}<\alpha<1. 
\end{equation}

With the above observations, we can finally rewrite \eqref{eq:motion} as the following evolution equation on the Banach space $X$ (defined in \eqref{eq:def-X}): 
\begin{equation}\label{eq:motion-abstract}
\left\{\begin{aligned}
&\frac{\d u}{\d t}+A u = F(u)\quad &&t>0,
\\
&u(0)=u_0\quad &&
\end{aligned}\right.\qquad 
\end{equation}
where $u=(v,\omega,\gamma)^T$, $A:=E^{-1}\tilde A$ and $F:=E^{-1}\tilde F$ with 
\[
D(A)=D(\tilde A)=X_1\hookrightarrow X,\qquad D(F)=D(\tilde F)\subset X.
\]
The following proposition holds. 

\begin{proposition}[Hypothesis \ref{H1}]\label{prop:A-satisfies-H1}
    Let $p\in (1,\infty)$. Then, $A$ is sectorial on $X$ (defined in \eqref{eq:def-X}) and $\Re(\sigma(A)) \subset (0,\infty)$.
\end{proposition}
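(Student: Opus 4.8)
The plan is to establish sectoriality through a direct analysis of the resolvent equation, and to locate the spectrum by an energy argument exploiting the positive definiteness of $E$. Fix $h\in X$ and $\lambda$ in a sector around the positive real axis; solving $(\lambda-A)u=h$ is equivalent to solving $\lambda Eu-\tilde Au=Eh=:f=(f_1,f_2,f_3)$ for $u=(v,\omega,\gamma)$. Writing out the three components and denoting by $A_S:=-\nu\P\Delta$ the Stokes operator (the fluid part of $\tilde A$, which is sectorial on $L^p_\sigma(\Omega)$ with $\Re(\sigma(A_S))\subset(0,\infty)$ by the classical theory), the system reads $(\lambda-A_S)v+\lambda K_2\omega=f_1$, $\lambda K_1v+(\lambda\I-I)\omega=f_2$, and $(\lambda-1)\gamma=f_3$. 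The $\gamma$-equation decouples and is solved explicitly for $\lambda\neq1$. Solving the first equation for $v=(\lambda-A_S)^{-1}(f_1-\lambda K_2\omega)$ and inserting it into the second reduces the problem to inverting the $3\times3$ matrix $M(\lambda):=(\lambda\I-I)-\lambda^2K_1(\lambda-A_S)^{-1}K_2$ acting on $\omega\in\R^3$.

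The hard part, which is the main obstacle, is that the coupling term $\lambda^2K_1(\lambda-A_S)^{-1}K_2$ is \emph{a priori} of the same order $O(|\lambda|)$ as the leading term $\lambda\I$, so invertibility of $M(\lambda)$ with $\norm{M(\lambda)^{-1}}\le C/|\lambda|$ is not immediate. I would resolve this in two moves. First, record the \emph{Schur-complement identity}: since $E$ is invertible and its $v$-$v$ block is the identity, the matrix $\I-K_1K_2$ is exactly the Schur complement of $E$ in the angular variable, hence invertible. Second, use $\lambda(\lambda-A_S)^{-1}-I=A_S(\lambda-A_S)^{-1}$ to split $M(\lambda)=\lambda(\I-K_1K_2)-I-\lambda K_1A_S(\lambda-A_S)^{-1}K_2$, and show the last term is genuinely subdominant: because the range of $K_2$ is spanned by the smooth fields $\P(\e_i\times x)$, one has $K_2\omega\in D(A_S^\theta)$ for some $\theta\in(0,1)$, and the moment estimate $\norm{A_S^{1-\theta}(\lambda-A_S)^{-1}}\le C|\lambda|^{-\theta}$ gives $\norm{\lambda K_1A_S(\lambda-A_S)^{-1}K_2}\le C|\lambda|^{1-\theta}=o(|\lambda|)$. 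Thus $M(\lambda)=\lambda(\I-K_1K_2)(I+o(1))$ is invertible for $|\lambda|$ large with the asserted decay. Feeding this back and using the Stokes resolvent bound $\norm{(\lambda-A_S)^{-1}}\le M/|\lambda|$ yields $\norm{u}_X\le(C/|\lambda|)\norm{h}_X$, i.e. the sectorial resolvent estimate for $A$ on $\{|\arg\lambda|\le\theta_0,\ |\lambda|\ge R_0\}$.

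To pin down the location of the spectrum I would first treat $p=2$, where $E$ is positive definite (as recorded above, following \cite{KoKr}). If $\tilde Au=\lambda Eu$ with $u\neq0$, pairing with $u$ gives $\lambda=\langle\tilde Au,u\rangle/\langle Eu,u\rangle$ with $\langle Eu,u\rangle>0$ and $\Re\langle\tilde Au,u\rangle=\nu\norm{\nabla v}_{L^2}^2+|\omega|^2+|\gamma|^2\ge0$; equality forces $\omega=\gamma=0$ and $v\equiv0$ by the Dirichlet condition, so every eigenvalue has $\Re\lambda>0$. Since $A$ has compact resolvent (the embedding $X_1\hookrightarrow X$ is compact), $\sigma(A)$ is a discrete set of eigenvalues of finite multiplicity; by elliptic regularity the eigenfunctions are smooth and lie in every $L^p_\sigma(\Omega)$, so the point spectrum---and hence $\sigma(A)$---is independent of $p\in(1,\infty)$, giving $\Re(\sigma(A))\subset(0,\infty)$ for all $p$. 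Finally, the far-field estimate of the previous step together with the discreteness of $\sigma(A)$ in the right half-plane lets one verify the resolvent bound on a full sector (after translating its vertex to enclose the finitely many spectral points in any bounded region), completing the proof that $A$ is sectorial.
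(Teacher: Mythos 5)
Your proposal reaches both conclusions by a genuinely different route from the paper's, and its mathematical substance is sound. The paper's proof of sectoriality is soft: it cites \cite{MR3916775} for the fact that $-\tilde A$ generates a bounded analytic semigroup on $X$ with $\sigma(\tilde A)\subset(0,\infty)$, writes $A=E^{-1}\tilde A=(I+K)\tilde A$ with $K$ bounded of finite rank, and invokes the abstract perturbation result \cite[Proposition 2.4.3]{Analytic-semigroups-Lunardi}. You instead rebuild the resolvent of the coupled operator by hand from classical Stokes theory: the reduction of $(\lambda-A)u=h$ to $\lambda Eu-\tilde Au=Eh$, the elimination of $v$ and $\gamma$, and the inversion of the matrix $M(\lambda)$ are all correct, and your two key observations are precisely what make the argument close: $\I-K_1K_2$ is the Schur complement of the $v$--$v$ block of the invertible operator $E$, hence invertible; and $R(K_2)=\mathrm{span}\{\P(\e_i\times x)\}\subset D(A_S^{\theta})$ for $2\theta<1/p$ (no boundary condition is required at that regularity, and the fields $\P(\e_i\times x)$ are smooth), so the moment inequality turns the coupling term into $O(|\lambda|^{1-\theta})$ instead of $O(|\lambda|)$. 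What your route buys is self-containedness and quantitative control (only the Stokes generation theory is quoted, not a generation theorem for the full coupled operator $\tilde A$); what the paper's route buys is brevity. On the location of the spectrum the two proofs essentially coincide --- compact resolvent, $p$-independence of the (point) spectrum, and the $p=2$ positivity argument using the positive definiteness of $E$ --- with your version usefully filling in details the paper leaves implicit (one harmless slip: with the weighted inner product, the middle term of $\Re\langle\tilde Au,u\rangle$ is $\bar\omega\cdot\I\cdot\omega$, not $|\omega|^2$; positivity is unaffected).

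One point you must fix, though: your sectors point the wrong way. In the convention the paper uses (Henry's), where $\Re(\sigma(A))\subset(0,\infty)$ and the semigroup is $\exp^{-tA}$, sectoriality requires the bound $\norm{(\lambda-A)^{-1}}_{\mathcal B(X;X)}\le M/|\lambda-a|$ on the \emph{complement} of a sector about the positive real axis, i.e.\ for $|\arg(\lambda-a)|\ge\theta_0$ with some $\theta_0<\pi/2$, not on $\{|\arg\lambda|\le\theta_0\}$ as you conclude. Moreover, the tools you deploy only exist on that complementary region: $(\lambda-A_S)^{-1}$, the bound $\norm{(\lambda-A_S)^{-1}}_{\mathcal B(X;X)}\le M/|\lambda|$, and the moment estimate $\norm{A_S^{1-\theta}(\lambda-A_S)^{-1}}_{\mathcal B(X;X)}\le C|\lambda|^{-\theta}$ all fail for $\lambda$ near $\sigma(A_S)\subset(0,\infty)$, so ``fix $\lambda$ in a sector around the positive real axis'' is inconsistent with your own computation. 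The repair is purely notational: run the entire Schur-complement analysis for $|\arg\lambda|\ge\theta_0$, $|\lambda|\ge R_0$, which is exactly where it is valid and exactly where sectoriality needs it; your final step (discreteness of $\sigma(A)$, positivity of the real parts of the finitely many eigenvalues in the ball $|\lambda|\le R_0$, and translation of the vertex $a$) then completes the proof as you describe.
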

\begin{proof}
By \cite[Theorem 9.]{MR3916775}, $-\tilde A$ is the generator of a bounded analytic semigroup on $X$ and $\sigma(\tilde A)\subset(0,\infty)$. We also note that \( E^{-1} = I + K \), where \( K := -(E - I)E^{-1} \). Since \( E - I \) is a bounded operator with finite rank, it follows that \( K \) is also bounded with finite rank. By \cite[Proposition 2.4.3]{Analytic-semigroups-Lunardi}, we conclude that \( A=E^{-1} \tilde{A} \) is sectorial.

It remains to verify that \( \Re(\sigma(A)) \subset (0, \infty) \). It is well known that \( \tilde{A} \) has a compact resolvent. Consequently, the operator \( A^{-1} = \tilde{A}^{-1} E \) is compact, and thus \( A \) has a discrete spectrum consisting entirely of eigenvalues. Furthermore, we notice that the spectrum is independent of the parameter \( p \) and $\tilde A$ is positive definite (when $p=2$).
\end{proof}
We also note that $X_\alpha= D(F)$ since $A$ admits bounded imaginary
powers by \cite[Theorem 4.17.]{MR3753604}, \cite[Theorem 2]{MR786549}, and \cite[Proposition B.3. \& Remark B.4. (R2)]{gfree}. Thus, hypothesis \ref{H2} is also satisfied. 

A complete characterization of steady-state solutions\footnote{We use the terminology of {\em steady-state solution} instead of {\em equilibrium point} because we are considering time-independent solutions of a system of equations --\eqref{eq:motion}-- governing a mechanical system that may not necessarily be in mechanical equilibrium. } to \eqref{eq:motion} has been proved in \cite[Theorem 4.7]{GaldiMazzoneMohebbi18}. We recall the result in the following. 

\begin{theorem}\label{th:steady-states}\cite[Theorem 4.7]{GaldiMazzoneMohebbi18}
Let $\mathcal E:=\{u\in X:\; Au=F(u)\}$. Then, we have the following characterization of $\mathcal E$ depending on the geometry and mass distribution within $\mathscr{S}$. 
\begin{enumerate}
\item If $\lambda_1=\lambda_2=\lambda_3$, then 
\begin{equation}\label{eq:def-PR}
\mathcal E\equiv \mathsf{PR}:=\{u=(0,\omega,q)\in D(A): \;q\times \e_3=0,\; \omega=\alpha q\text{ for some }\alpha\in \R\}.
\end{equation}
\item If  $\lambda_1=\lambda_2\neq \lambda_3$, then $\mathcal E\equiv \mathsf{PR}\cup \mathsf{SP}$, where 
\begin{equation}\label{eq:def-SP}
\mathsf{SP}:=\left\{u=(0,\omega,q)\in D(A):\;\omega=\alpha q,\; q_3=-\frac{\beta^2}{\alpha^2(\lambda_3-\lambda_1)},\; \text{ for some }\alpha\in \R
\right\}. 
\end{equation}
\item If $\lambda_1\neq \lambda_2=\lambda_3$, then $\mathcal E\equiv \mathsf{PR}\cup \mathsf{SP}_1$, where 
\begin{equation}\label{eq:def-SP1}
\mathsf{SP}_1:=\left\{u=(0,\omega,q)\in D(A):\;\omega=\alpha q,\; \text{some }\alpha \in \R, \;q_2\equiv 0,\;\ q_3=-\frac{\beta^2}{\alpha^2(\lambda_3-\lambda_1)}
\right\}. 
\end{equation}
\item If $\lambda_1=\lambda_3\neq \lambda_2$, then $\mathcal E\equiv \mathsf{PR}\cup \mathsf{SP}_2$, where 
\begin{equation}\label{eq:def-SP2}
\mathsf{SP}_2:=\left\{u=(0,\omega,q)\in D(A):\;\omega=\alpha q,\; \text{some }\alpha\in \R, \;q_1\equiv 0,\;\ q_3=-\frac{\beta^2}{\alpha^2(\lambda_3-\lambda_2)}
\right\}. 
\end{equation}
\item If $\lambda_1\neq \lambda_2\neq \lambda_3$, then $\mathcal E\equiv \mathsf{PR}\cup \mathsf{SP}_1\cup \mathsf{SP}_2$.
\end{enumerate}
\end{theorem}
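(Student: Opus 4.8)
The plan is to unpack the abstract equilibrium equation into the stationary form of \eqref{eq:motion}, prove that the relative fluid velocity must vanish, and then solve the resulting finite-dimensional algebraic system. Since $E$ is invertible, $Au=F(u)$ with $u=(v,\omega,\gamma)$ is equivalent to $\tilde A u=\tilde F(u)$; reading off the three components from \eqref{eq:operators}, this is precisely the stationary version of \eqref{eq:motion},
\begin{equation*}
\P\!\left(\nu\Delta v-v_i\partial_{x_i}v-2\omega\times v\right)=0,\qquad
\omega\times\I\cdot(\omega-\a)=\beta^2\e_3\times\gamma,\qquad
\omega\times\gamma=0,
\end{equation*}
together with $\div v=0$, $v=0$ on $\partial\Omega$, and $|\gamma|=1$. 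Gravity no longer appears because the spatially constant field $g\gamma=\nabla(g\gamma\cdot x)$ is a gradient and is therefore annihilated by the Helmholtz projection $\P$ (equivalently, it is absorbable into the pressure). By elliptic regularity the stationary field $v$ is smooth up to $\partial\Omega$; in particular $v\in L^2_\sigma(\Omega)$, which justifies all the integrations by parts below.

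First I would show that $v=0$. Taking the $L^2(\Omega)$ inner product of the first equation with $v$ and using that $\P$ is the orthogonal projection onto $L^2_\sigma(\Omega)$, the projection drops out, leaving
\[
\nu\int_\Omega (\Delta v)\cdot v\,\d x-\int_\Omega (v_i\partial_{x_i}v)\cdot v\,\d x-2\int_\Omega(\omega\times v)\cdot v\,\d x=0.
\]
The convective term vanishes, since $\int_\Omega (v_i\partial_{x_i}v)\cdot v\,\d x=\tfrac12\int_\Omega v_i\partial_{x_i}|v|^2\,\d x=-\tfrac12\int_\Omega|v|^2\,\div v\,\d x=0$ by $\div v=0$ and $v|_{\partial\Omega}=0$; the Coriolis term vanishes pointwise because $(\omega\times v)\cdot v=0$; and integrating the viscous term by parts gives $-\nu\norm{\nabla v}_{L^2(\Omega)}^2$. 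Hence $\norm{\nabla v}_{L^2(\Omega)}=0$, so $v$ is constant and, by the no-slip condition, $v\equiv 0$. Consequently $\a=-\I^{-1}\cdot\int_\Omega\rho\,x\times v\,\d x=0$, and the equilibrium problem collapses to the finite-dimensional system $\omega\times\I\cdot\omega=\beta^2\,\e_3\times\gamma$, $\omega\times\gamma=0$, $|\gamma|=1$.

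It remains to solve this system. From $\omega\times\gamma=0$ and $|\gamma|=1$ we obtain $\omega=\alpha\gamma$ for some $\alpha\in\R$; writing $q:=\gamma$ and substituting yields $\alpha^2\,(q\times\I\cdot q)=\beta^2\,(\e_3\times q)$. With $\I=\mathrm{diag}(\lambda_1,\lambda_2,\lambda_3)$ in the principal frame, evaluating both cross products componentwise reduces this vector identity to the three scalar equations
\begin{equation*}
\alpha^2(\lambda_3-\lambda_2)q_2q_3=-\beta^2 q_2,\qquad
\alpha^2(\lambda_1-\lambda_3)q_1q_3=\beta^2 q_1,\qquad
\alpha^2(\lambda_2-\lambda_1)q_1q_2=0,
\end{equation*}
subject to $q_1^2+q_2^2+q_3^2=1$. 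The characterization \eqref{eq:def-PR}--\eqref{eq:def-SP2} then follows by a case analysis driven by the differences $\lambda_3-\lambda_2,\ \lambda_1-\lambda_3,\ \lambda_2-\lambda_1$ (recall $\beta^2=Mg\ell>0$). If all $\lambda_i$ coincide, the first two equations force $q_1=q_2=0$, so $q=\pm\e_3$ and we recover $\mathsf{PR}$. If exactly two eigenvalues coincide, the equation whose coefficient vanishes either is trivial or forces the corresponding coordinate to be zero, while a surviving equation factors as $q_j\bigl(\alpha^2(\lambda_3-\lambda_k)q_3+\beta^2\bigr)=0$, yielding either $q=\pm\e_3$ ($\mathsf{PR}$) or a tilted state with $q_3=-\beta^2/\bigl(\alpha^2(\lambda_3-\lambda_k)\bigr)$, matching $\mathsf{SP}$, $\mathsf{SP}_1$ or $\mathsf{SP}_2$ according to which pair coincides. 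If all eigenvalues are distinct, the third equation gives $\alpha=0$ (whence $\omega=0$ and $q=\pm\e_3$) or $q_1q_2=0$, and each of the branches $q_1=0$, $q_2=0$ reduces to one of the previous alternatives, producing $\mathsf{PR}\cup\mathsf{SP}_1\cup\mathsf{SP}_2$.

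The crux is not the algebra but the reduction: what makes the infinite-dimensional problem tractable is the vanishing of $v$, which hinges on recognizing that the gravitational forcing is a gradient (killed by $\P$) so that pairing the momentum balance with $v$ leaves only the strictly negative viscous dissipation. The remaining care is technical: justifying the regularity required for the integrations by parts, and tracking the sign and index conventions through the five cases so that the tilted families coincide exactly with \eqref{eq:def-SP}--\eqref{eq:def-SP2}.
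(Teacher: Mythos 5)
Your proposal is mathematically sound, but note that this paper contains no proof of the statement to compare against: Theorem \ref{th:steady-states} is imported verbatim from \cite[Theorem 4.7]{GaldiMazzoneMohebbi18}, so the argument lives in that reference, not here. Your route is the natural one and, as far as this paper's formulation goes, it checks out: invertibility of $E$ reduces $Au=F(u)$ to the stationary system, the energy pairing with $v$ (convective and Coriolis terms vanish, gravity is a gradient killed by $\P$, consistent with its absence from $\tilde F$ in \eqref{eq:operators}) forces $\nabla v=0$ and hence $v\equiv 0$ and $\a=0$, and the componentwise reduction of $\alpha^2\,q\times\I\cdot q=\beta^2\,\e_3\times q$ to the three scalar equations is correct, as is the five-way case analysis --- including the key structural point that the equations with right-hand sides $-\beta^2 q_2$ and $\beta^2 q_1$ are \emph{inhomogeneous}, so a vanishing coefficient $\lambda_3-\lambda_2$ or $\lambda_1-\lambda_3$ forces $q_2=0$ or $q_1=0$ respectively (producing the constraints $q_2\equiv0$ in $\mathsf{SP}_1$ and $q_1\equiv 0$ in $\mathsf{SP}_2$), while only the third, homogeneous equation becomes trivial. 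Two small caveats: first, you use $|\gamma|=1$ throughout, whereas the sets \eqref{eq:def-PR}--\eqref{eq:def-SP2} as stated do not impose it; this is a convention inherited from the cited paper (and consistent with how the present paper later restricts to $q\in\mathbb{S}^2$), but without it the abstract equation $Au=F(u)$ admits extra solutions such as $(0,\omega,0)$ with $\omega$ a principal axis of $\I$, so the normalization is genuinely needed for the stated characterization. Second, the $L^2$ pairing that makes $\P$ drop out deserves the justification you only sketch (the equation holds in $L^p_\sigma$, so one either invokes elliptic regularity of the stationary Stokes-type system, as you do, or rewrites $\P$-annihilation as the existence of a pressure gradient and pairs with divergence-free $v$); this is routine but should not be waved off entirely.
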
 

We are now ready to state the first main result of this section stating that equilibria of \eqref{eq:motion-abstract} are either normally stable or normally hyperbolic in the sense of Definitions \ref{def:normally-stable} and \ref{def:normally-hyperbolic}. Let $\mathbb{S}^2$ denote the unit sphere in $\R^3$. The following theorem holds.  

\begin{theorem}\label{th:steady-states-normally-stable-hyperbolic}

    \begin{enumerate}[label=\rm{(\alph*)},ref=(\alph*)]
    \item\label{th:normally-sh-SP1}  
    $\Bar{s}=(0,\alpha q,q)\in \mathsf{SP}_1$, with $\alpha \in \R\setminus\{0\}$ and $q\in\mathbb{S}^2$, is either normally stable or normally hyperbolic provided that
    \begin{equation}\label{eq:condition-beta-alpha-lambda-sp1}
        \frac{3\beta^{4}}{\alpha^{4} \lambda_1(\lambda_3 - \lambda_1)} \neq 1.
    \end{equation}
    \item\label{th:normally-sh-SP2}
    $\Bar{s}=(0,\alpha q,q)\in \mathsf{SP}_2$, with $\alpha \in \R\setminus\{0\}$ and $q\in\mathbb{S}^2$, is either normally stable or normally hyperbolic provided that
    \begin{equation}\label{eq:condition-beta-alpha-lambda-sp2}
        \frac{3\beta^{4}}{\alpha^{4} \lambda_2(\lambda_3 - \lambda_2)} \neq 1.
    \end{equation}
    \item\label{th:normally-sh-PR} 
    $\Bar{s}=(0,\alpha q,q)\in \mathsf{PR}$, with $\alpha\in \R\setminus\{0\}$ and $q\in \mathbb{S}^2$,  is either normally stable or normally hyperbolic provided that
    \begin{equation}\label{eq:condition-beta-alpha-lambda-pr}
        \frac{\alpha^2}{\beta^2}(\lambda_3-\lambda_1)\neq 1\qquad\text{and}\qquad  \frac{\alpha^2}{\beta^2}(\lambda_3-\lambda_2) \neq 1.
    \end{equation}
    \end{enumerate}
\end{theorem}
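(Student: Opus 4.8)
The plan is to verify, for each of the three families, the structural conditions \ref{equi1}--\ref{no-im} of Definition \ref{def:normally-stable}; once these hold, the decomposition $\sigma(L)=\{0\}\cup\big(\sigma(L)\setminus i\R\big)$ granted by \ref{no-im} forces $\Re(\sigma(L))\setminus\{0\}$ either to lie entirely in $(0,\infty)$---so that \ref{ns} holds and $\bar s$ is normally stable---or to meet $(-\infty,0)$---so that \ref{ni} of Definition \ref{def:normally-hyperbolic} holds and $\bar s$ is normally hyperbolic. Thus the dichotomy in the statement is automatic, and the whole task is to check \ref{equi1}--\ref{no-im}.

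Condition \ref{equi1} is read off directly from Theorem \ref{th:steady-states}: near any $\bar s=(0,\alpha q,q)$ with $\alpha\neq0$, each of $\mathsf{SP}_1$, $\mathsf{SP}_2$, $\mathsf{PR}$ in \eqref{eq:def-SP1}, \eqref{eq:def-SP2}, \eqref{eq:def-PR} is the image of an explicit $C^1$ map $\psi$ of two real parameters (the spin $\alpha$ and one coordinate of $q$, with $q_3$ slaved to $\alpha$ through $q_3=-\beta^2/(\alpha^2(\lambda_3-\lambda_i))$), so that $m=2$ and \ref{equi1a}--\ref{equi1d} follow. I would then study the linearization $L=A-F'(\bar s)=E^{-1}(\tilde A-\tilde F'(\bar s))$ of Remark \ref{rm:stability-0} through the generalized eigenvalue problem $(\tilde A-\tilde F'(\bar s))\,u=\lambda E u$, $u=(v,\omega,\gamma)$. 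Because $\tilde F$ in \eqref{eq:operators} is bilinear and $\bar v=0$, the fluid row of $\tilde A-\tilde F'(\bar s)$ collapses to the purely fluid operator $Sv:=-\nu\P\Delta v+2\,\P(\bar\omega\times v)$, while the two finite rows give a ``heavy-top'' operator in $(\omega,\gamma)\in\R^3\times\R^3$ coupled to the fluid only through the finite-rank maps $\a'(v)=-\I^{-1}\!\int_\Omega\rho\,x\times v\,\d x$, $K_1$, $K_2$. Since $\Re\langle Sv,v\rangle=\nu\rho\,\norm{\nabla v}^2$ (the gyroscopic term $2\P(\bar\omega\times v)$ being skew), $S$ is coercive, and by Proposition \ref{prop:A-satisfies-H1} the resolvent of $A$ is compact, so $\sigma(L)$ is a discrete set of eigenvalues.

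The heart of the proof is to show that the eigenvalue $0$ and the purely imaginary eigenvalues are governed by the finite-dimensional part. At $\lambda=0$ the fluid row reads $Sv=0$, which by coercivity forces $v=0$; hence $N(L)$ is isomorphic to the kernel of the $6\times6$ system
\begin{equation*}
\omega\times\I\bar\omega+\bar\omega\times\I\omega-\beta^2\e_3\times\gamma=0,\qquad \omega\times q+\bar\omega\times\gamma=0,\qquad \bar\omega=\alpha q.
\end{equation*}
Verifying \ref{tangent-space} then reduces to checking that this kernel equals $\psi'(0)(\R^2)$, which I would confirm by differentiating the explicit parametrization, and \ref{semi-simple} (with ``$0$ isolated'' free from discreteness) reduces to checking that this matrix has $0$ as a semisimple eigenvalue, i.e.\ no associated Jordan chain. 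For \ref{no-im}, I would use the energy balance of the system: the linearized flow carries a quadratic functional $\mathcal E_{\mathrm{lin}}$ whose time derivative equals $-\nu\rho\,\norm{\nabla v}^2$; any purely imaginary mode $\exp^{-ist}u$ with $s\neq0$ is periodic, so integrating this identity over one period yields $\int_0^{2\pi/s}\norm{\nabla v}^2\,\d t=0$, whence $v\equiv0$ and the eigenvalue problem again collapses onto the finite rigid-body matrix. Thus \ref{no-im} becomes the statement that this matrix has no nonzero eigenvalue on $i\R$.

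I expect the decisive and most laborious step to be this finite-dimensional spectral computation, carried out separately for $\mathsf{SP}_1$ and $\mathsf{SP}_2$ (interchanged by the relabeling $\lambda_1\leftrightarrow\lambda_2$, $q_1\leftrightarrow q_2$) and for $\mathsf{PR}$ (where $q=\pm\e_3$). Substituting the equilibrium constraints of \eqref{eq:def-SP1}--\eqref{eq:def-PR} into the characteristic polynomial of the $6\times6$ matrix, both the semisimplicity of $0$ and the absence of nonzero imaginary roots should reduce to the non-vanishing of a single algebraic expression in $\alpha,\beta,\lambda_1,\lambda_2,\lambda_3$, which I anticipate to be precisely \eqref{eq:condition-beta-alpha-lambda-sp1}, \eqref{eq:condition-beta-alpha-lambda-sp2} and \eqref{eq:condition-beta-alpha-lambda-pr}. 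The subtle point deserving care is the justification of the reduction itself---that the finite-rank fluid--rigid coupling introduces no further eigenvalues on $i\R$ nor extra Jordan structure at $0$---for which the energy identity above, together with the positive definiteness of $E$ and the $p$-independence of the spectrum at $p=2$ (cf.\ Proposition \ref{prop:A-satisfies-H1}), is the crucial ingredient.
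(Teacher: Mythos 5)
Your outline gets the skeleton right (the dichotomy is automatic once \ref{equi1}--\ref{no-im} hold; coercivity of the Stokes-type operator kills $v$ in the kernel; the kernel then matches $T_{\bar s}\mathcal E$), but the central reduction you propose --- that conditions \ref{semi-simple} and \ref{no-im} ``collapse onto'' the bare $6\times 6$ rigid-body matrix --- is not valid, and this is exactly where the theorem lives or dies. The reason is the mass operator $E$ in \eqref{eq:operators}: the eigenvalue problem is the generalized one $(\tilde A-\tilde F'(\bar s))u=\lambda E u$, and the fluid row of $\lambda Eu$ carries the inertial coupling $\lambda\,\P(\omega\times x)$. For $\lambda=is$, $s\neq 0$, the energy/periodicity argument does force $v\equiv 0$, but the fluid momentum equation does \emph{not} then become vacuous: it reduces to $is\,\P(\omega\times x)=0$, and taking the curl (as the paper does in Proposition \ref{prop:no-imaginary-eigen}, where \eqref{3.19}$_{2}$ gives $\dot\omega_*\times x=-\nabla p$, hence $\dot\omega_*=0$) forces $\omega=0$. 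This extra constraint is what eliminates imaginary eigenvalues. Your reduced claim --- ``the rigid-body matrix has no nonzero eigenvalue on $i\R$'' --- is in fact false in general: the linearized Euler--Poisson (heavy-top) system is conservative, its eigenvalues come in quadruples $\pm\lambda,\pm\bar\lambda$, and for spectrally stable configurations \emph{all} nonzero eigenvalues are purely imaginary. So your finite-dimensional check of \ref{no-im} would fail precisely in the cases where $\bar s$ ought to come out normally stable; the theorem is saved only by the fluid--inertia coupling your reduction discards.

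The same defect affects semi-simplicity. A Jordan chain at $0$ solves $\bar L u_1=Eu_0$ with $u_0=(0,\omega_0,z_0)\in N(L)$, and the fluid row of this equation is $\P(-\nu\Delta v_1+2\alpha q\times v_1)=\P(\omega_0\times x)$, which has a \emph{nonzero} solution $v_1$ whenever $\omega_0\neq 0$; this feeds back into the rigid rows through $\a_1=-\I^{-1}\!\int_\Omega\rho\,x\times v_1\,\d x$ (see \eqref{Lieq:motion1}$_{3}$). Hence the obstruction to a Jordan chain is not the solvability condition of your bare $6\times6$ matrix. The paper never performs that finite-dimensional computation: in Proposition \ref{prop:0-semi-simple} it first reduces $N(L)\oplus R(L)=X$ to $N(L)\cap R(L)=\{0\}$ via Fredholm theory (index zero, $0$ isolated), and then derives \emph{necessary} conditions by dotting \eqref{Lieq:motion1}$_{4}$ with $q$ and \eqref{Lieq:motion1}$_{3}$ with $\alpha q$ --- the latter annihilating the unknown $\a$-term since $\alpha q\times(\cdot)\perp\alpha q$ --- which, combined with the steady-state identity \eqref{eq:identity}, yields \eqref{3.14}; inserting the kernel parametrization produces $\mu_0\bigl(\lambda_1-3\beta^4/(\alpha^4(\lambda_3-\lambda_1))\bigr)=0$, and this is where \eqref{eq:condition-beta-alpha-lambda-sp1} enters. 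The ingredients you invoke for the reduction (the energy identity, positive definiteness of $E$, $p$-independence of the spectrum) do not supply this step: they control $v$ in eigenvectors, but not the feedback of $v_1$ and the inertial term $\P(\omega\times x)$ in generalized eigenvectors and imaginary modes. To repair your proof you would have to carry the coupling through, which is essentially what the paper's orthogonality and curl arguments do.
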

We impose the condition $q\in\mathbb{S}^2$ because, from the physical point of view, $q$ represents the direction of the gravity in the principal frame of inertia. Also, we are interested in general steady states occurring with nonzero angular velocity $\alpha$. 

We note that part \ref{th:normally-sh-PR} of the above theorem has already been proved in \cite[Theorem 4.5]{GaldiMazzone21}. We only prove part \ref{th:normally-sh-SP1} as the proof of part \ref{th:normally-sh-SP2} follows with identical arguments. 
With Propositions \ref{0ineigenvalue}, \ref{prop:0-semi-simple} and \ref{prop:no-imaginary-eigen}, we check that properties (II)--(IV) of Definition \ref{def:normally-stable} hold, all the others follow immediately. 

Let $\bar u=(0,\alpha q,q)\in \mathcal E$, for some $\alpha\in \R$. Consider the operator 
\begin{equation}\label{eq:linear-around-steady}
L:\; u\in D(L):=X_1\mapsto Lu:=Au-F'(\bar u)u, 
\end{equation}
defined in Remark \ref{rm:stability-0}. More explicitly, we have that $L=E^{-1}\bar L$, where 
\begin{equation}\label{eq:bar-L}
\begin{split}
\bar L:\; u&=(v,\omega,z)\in D(\bar L)=X_1
\\
&\mapsto (\P(2\alpha q\times x-\nu\Delta v),
\alpha q \times\I\cdot(\omega-\a) + \omega \times \I\cdot \alpha q-\beta^2\e_3\times z,
\\
&\qquad\qquad\qquad\qquad\qquad\qquad\qquad\qquad\qquad\qquad
\alpha q\times z + \omega \times q)\in X. 
\end{split}
\end{equation}
We may refer to $L$ as the {\em linearized operator around the steady-state solution $\bar u$}.

\begin{proposition} \label{0ineigenvalue}
    Let $\Bar{s}=(0,\alpha q,q)\in \mathsf{SP}_1$, with $\alpha \in \R\setminus\{0\}$ and $q\in\mathbb{S}^2$. Then, 
    \( \dim(N(L)) = 2 \) and $T_{\bar u}\mathcal E=N(L)$.
\end{proposition}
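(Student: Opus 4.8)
The plan is to reduce the computation of $N(L)$ to the explicit operator $\bar L$ of \eqref{eq:bar-L}, then to a finite-dimensional linear-algebra problem, and finally to match the answer with $T_{\bar u}\mathcal E$ through two explicit curves of equilibria. Since $E\colon X\to X$ is a bounded bijection (established in the discussion preceding Proposition \ref{prop:A-satisfies-H1}) and $L=E^{-1}\bar L$, we have $N(L)=N(\bar L)$; hence it suffices to find all $u=(v,\omega,z)\in X_1$ with $\bar L u=0$. Reading off the three components of \eqref{eq:bar-L}, the fluid component decouples into the homogeneous Stokes-type identity $\nu\,\P\Delta v=2\alpha\,\P(q\times v)$, while the body components give
\[
\alpha\, q\times\I\cdot(\omega-\a)+\alpha\,\omega\times\I\cdot q-\beta^2\,\e_3\times z=0,\qquad \omega\times q+\alpha\, q\times z=0,
\]
where $\a=\a(v)$ depends linearly on $v$.

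First I would show that the fluid component forces $v=0$. By interior and boundary elliptic regularity for the Stokes system any $L^p_\sigma(\Omega)$-solution is smooth, and the kernel is independent of $p$ (as used in Proposition \ref{prop:A-satisfies-H1}), so I may pair the identity with $v$ in the $L^2_\sigma(\Omega)$ inner product. Integrating by parts and using the no-slip condition $v=0$ on $\partial\Omega$ gives $\nu\norm{\nabla v}_{L^2(\Omega)}^2=2\alpha\int_\Omega(q\times v)\cdot v\,\d x=0$, since $(q\times v)\cdot v=0$ pointwise; hence $v\equiv 0$ and therefore $\a(v)=0$.

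With $v=0$ and $\a=0$, the kernel is cut out by the two body equations in $(\omega,z)\in\R^3\times\R^3$. Writing $q=(q_1,0,q_3)$ (on $\mathsf{SP}_1$ one has $q_2=0$, $q_3=-\beta^2/(\alpha^2(\lambda_3-\lambda_1))\ne 0$ by \eqref{eq:def-SP1}, and $q_1^2+q_3^2=1$), I would expand both equations componentwise. The four scalar equations coming from the $\e_1$- and $\e_3$-components involve only $\omega_2,z_2$ and, because $\alpha\ne 0$, $q_1\ne 0$ and $\lambda_1\ne\lambda_2$, force $\omega_2=z_2=0$; the two $\e_2$-components then reduce to two linear relations among $\omega_1,\omega_3,z_1,z_3$ whose coefficient rows are non-proportional (the coefficients of $z_3$ being $-\alpha q_1\ne 0$ and $0$). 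The solution space is therefore exactly $4-2=2$ dimensional, proving $\dim N(L)=2$. The main technical point is precisely this bookkeeping: keeping track of the eigenvalue differences and of the nonvanishing of $\alpha$, $q_3$ and especially $q_1$, so that the dimension comes out to \emph{exactly} $2$. Note that $q_1\ne 0$ means $q\not\parallel\e_3$, i.e.\ $\bar u$ is a genuine precession and not a permanent rotation of $\mathsf{PR}$; at the degenerate points $q_1=0$ the sets $\mathsf{SP}_1$ and $\mathsf{PR}$ cross and the kernel is strictly larger, so this nondegeneracy is essential.

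Finally, to identify $T_{\bar u}\mathcal E$ with $N(L)$ I would exhibit two tangent directions. The set $\mathsf{SP}_1$ is parametrized by $(\alpha,q_1)$ via $u(\alpha,q_1)=(0,\alpha q,q)$ with $q=(q_1,0,q_3(\alpha))$; differentiating along $q_1\mapsto u$ and $\alpha\mapsto u$ at $\bar u$ yields the two vectors $(0,(\alpha,0,0),(1,0,0))$ and $(0,(q_1,0,-q_3),(0,0,-2q_3/\alpha))$, and a direct substitution (using $\alpha^2(\lambda_1-\lambda_3)q_3=\beta^2$) confirms that both solve the reduced system, while their $z$-components show they are linearly independent. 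On the other hand, differentiating the equilibrium relation $A\sigma(t)=F(\sigma(t))$ along any $C^1$ curve $\sigma\subset\mathcal E$ through $\bar u$, and using $F\in C^1(X_\alpha;X)$, gives $L\sigma'(0)=0$, so $T_{\bar u}\mathcal E\subseteq N(L)$. Combining the two explicit tangent vectors with $\dim N(L)=2$ forces $\dim T_{\bar u}\mathcal E=2$, whence $T_{\bar u}\mathcal E=N(L)$.
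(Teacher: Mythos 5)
Your proposal is correct and follows essentially the same route as the paper: reduce $Lu=0$ to the explicit system $\bar L u=0$, eliminate the fluid component by the energy/Poincar\'e argument, and then determine the dimension of the resulting finite-dimensional algebraic system in $(\omega,z)$, matching it against the two tangent directions of $\mathsf{SP}_1$. The only differences are organizational: where you row-reduce the six scalar equations directly, the paper first eliminates $\omega=\alpha z+\mu q$ from the constraint $(\omega-\alpha z)\times q=0$ and solves the single resulting vector equation; and your explicit flagging of the nondegeneracy $q_1\neq 0$ (i.e.\ $\bar s\notin\mathsf{PR}$, where the kernel indeed jumps in dimension) is a hypothesis the paper's computation also needs but leaves tacit.
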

\begin{proof}
Since $T_{\bar u}\mathcal E\subset N(L)$ and $\dim(T_{\bar u}\mathcal E)=2$, it is enough to prove that also $\dim(N(L))=2$. Let $u=(v,\omega,z)\in N(L)$. The equation \( Lu = 0 \) is equivalent to the system:
    \begin{equation}\label{Lieq:motion}
       \left\{
\begin{aligned}
   & \div v=0\qquad&&\text{in }\Omega,
   \\
   &2\alpha q \times v=\nu\Delta v-\nabla p \qquad&&\text{in }\Omega,
   \\
   &\alpha q \times\I\cdot(\omega-\a) + \omega \times \I\cdot \alpha q=\beta^2\e_3\times z,
   \\
   &\alpha q\times z + \omega \times q =0,
   \\
   &v=0\qquad&&\text{on }\partial \Omega. 
\end{aligned}
\right.
    \end{equation}
    
    By taking the dot product of equation \eqref{Lieq:motion}$_2$ with \( v \), integrating over \( \Omega \), and using the boundary condition \eqref{Lieq:motion}$_5$, we obtain
    \[
    \int_\Omega \left( \nu \| \nabla v \|_2^2 + p \, \div v \right) dx = 0.
    \]
    Using the divergence-free condition \eqref{Lieq:motion}$_1$ and the Poincar\'e inequality, we conclude that \( v = 0 \). Consequently, equations \eqref{Lieq:motion}$_3$ and \eqref{Lieq:motion}$_4$ reduce to
    \begin{equation}\label{eq:omega-z-1}
        \alpha q \times \mathbb{I} \cdot \omega + \alpha \omega \times \mathbb{I} \cdot q = \beta^2 \e_3 \times z,
    \end{equation}
    \begin{equation} \label{eq:omega-z-2}
        (-\alpha z  + \omega) \times q = 0.
    \end{equation}
    From \eqref{eq:omega-z-2}, we deduce that
    \[
    \omega = \alpha z + \mu q \quad \text{for some } \mu \in \mathbb{R}.
    \]
    Substituting this into \eqref{eq:omega-z-1}, we obtain
    \begin{equation} \label{eq:main}
        \alpha^2 (q \times \mathbb{I} \cdot z + z \times \mathbb{I} \cdot q) + 2 \alpha \mu (q \times \mathbb{I} \cdot q) = \beta^2 \e_3 \times z.
    \end{equation}
The above algebraic system admits a solution $z=z_i\e_i$ satisfying 
    \[
    z_1= t \text{ (a free parameter),}\quad z_2 = 0, \quad z_3 = -\frac{2\mu}{\alpha} q_3. 
    \]
    Therefore, the kernel of \( L \) is spanned by the following two linearly independent vectors:
    \begin{equation}\label{eq:N(L)-spanned}
    N(L) = \text{span} \left\{ 
        \left(v=0,\omega=q_1\e_1-q_3\e_3,z=-\frac{2}{\alpha} q_3\e_3\right),\;
        \left(v=0,\omega=\alpha\e_1,z=\e_1 \right)
    \right\}.
    \end{equation}
\end{proof}
Our next goal is to show that $\lambda=0$ is a semi-simple eigenvalue of $L$. 

\begin{proposition}\label{prop:0-semi-simple}
If condition \eqref{eq:condition-beta-alpha-lambda-sp1} holds, then $\lambda=0$ is a semi-simple eigenvalue of $L$.
\end{proposition}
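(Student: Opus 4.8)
The plan is to verify the two requirements of part \ref{semi-simple} of Definition \ref{def:normally-stable}: that $0$ is an isolated eigenvalue of $L$, and that $N(L)\oplus R(L)=X$. The first is essentially free. By Proposition \ref{prop:A-satisfies-H1} the operator $A$ has compact resolvent, and since $F'(\bar u)\in\mathcal B(X_\alpha;X)$ with $X_1\hookrightarrow X_\alpha$ compactly (because $\alpha<1$), the perturbation $L=A-F'(\bar u)$ still has compact resolvent. Hence $\sigma(L)$ is discrete and consists of eigenvalues of finite algebraic multiplicity; as $\dim N(L)=2$ by Proposition \ref{0ineigenvalue}, $0$ is an isolated eigenvalue. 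In particular $L$ is Fredholm of index zero, so $R(L)$ is closed and $\mathrm{codim}\,R(L)=\dim N(L)=2$. Consequently the decomposition $N(L)\oplus R(L)=X$ is equivalent to $N(L)\cap R(L)=\{0\}$, which in turn is equivalent to the absence of a Jordan chain of length two, i.e.\ to $N(L^{2})=N(L)$.

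The heart of the argument is therefore to show $N(L^{2})=N(L)$, which I would do by contradiction. Suppose $u=(v,\omega,z)\in X_1$ satisfies $Lu=w$ with $0\neq w\in N(L)$, and write $w=a\,w_1+b\,w_2$ in the basis \eqref{eq:N(L)-spanned}, so that $w$ has rigid-body components $\omega_w,z_w$ and zero fluid component. Using $L=E^{-1}\bar L$ with $\bar L$ as in \eqref{eq:bar-L}, the identity $Lu=w$ reads $\bar Lu=Ew$. Its first (fluid) component is the Stokes-type problem
\[
-\nu\P\Delta v+2\alpha\,\P(q\times v)=\P(\omega_w\times x)\quad\text{in }\Omega,\qquad v=0\ \text{on }\partial\Omega,\quad \div v=0 .
\]
Testing the associated homogeneous equation with $v$ and using that $\int_\Omega(q\times v)\cdot v\,\d x=0$ pointwise — exactly the computation behind \eqref{Lieq:motion} in Proposition \ref{0ineigenvalue} — gives $\nu\|\nabla v\|_{2}^{2}=0$, hence injectivity; by compactness the operator on the left is then boundedly invertible on $L^p_\sigma(\Omega)$. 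Thus $v$ is uniquely determined by $\omega_w$, and it enters the remaining two equations only through the finite-rank functional $K_1v=\int_\Omega\rho\,x\times v\,\d x$ from \eqref{eq:operators}.

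The two remaining (rigid-body) components of $\bar Lu=Ew$ then reduce to a finite-dimensional linear system for $(\omega,z)\in\R^3\times\R^3$ whose right-hand side is linear in $(a,b)$ through $\omega_w$, $z_w$ and the induced $K_1v$. Since $L$ is Fredholm of index zero with two-dimensional kernel, this system is solvable precisely when $Ew$ is compatible with the two cokernel directions, i.e.\ when $Ew$ is annihilated by a basis of $N(\bar L^{*})$. Writing out this $2\times2$ compatibility determinant and substituting the defining relations of $\mathsf{SP}_1$, namely $q_2=0$, $q_3=-\beta^{2}/(\alpha^{2}(\lambda_3-\lambda_1))$ and $|q|=1$, collapses the obstruction to a single scalar quantity proportional to $3\beta^{4}/(\alpha^{4}\lambda_1(\lambda_3-\lambda_1))-1$. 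Under \eqref{eq:condition-beta-alpha-lambda-sp1} this quantity is nonzero, so the compatibility map $(a,b)\mapsto\R^{2}$ is injective and forces $a=b=0$, contradicting $w\neq0$. Hence $N(L^{2})=N(L)$, and $0$ is semi-simple.

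I expect the genuine difficulty to lie entirely in this last finite-dimensional computation: one must track the fluid contribution through $K_1v$ and evaluate the cross-product and inertia-tensor expressions in \eqref{eq:bar-L} carefully enough to see that the $2\times2$ obstruction determinant is a nonzero multiple of $3\beta^{4}/(\alpha^{4}\lambda_1(\lambda_3-\lambda_1))-1$. In particular, verifying that the term $q\times K_1v$ does not accidentally cancel this expression, and that the factor $3$ and the moment $\lambda_1$ emerge with the correct signs, is the delicate point; everything upstream of it is soft Fredholm theory.
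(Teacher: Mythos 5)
Your soft framework coincides with the paper's: compact resolvent of $L$ gives that $0$ is an isolated eigenvalue, the Fredholm-index-zero property reduces the splitting $N(L)\oplus R(L)=X$ to $N(L)\cap R(L)=\{0\}$, i.e.\ to the absence of Jordan chains. The genuine gap is at the heart of the argument: you assert, but never establish, that the $2\times 2$ compatibility determinant ``collapses to a single scalar quantity proportional to $3\beta^{4}/(\alpha^{4}\lambda_1(\lambda_3-\lambda_1))-1$'', and you yourself flag as unresolved whether the fluid contribution $K_1v$ could cancel or contaminate that quantity. In a blind proof this is exactly the content that must be supplied; as written, the determinant is simply claimed to be what the proposition needs it to be. Moreover, your route makes the computation look harder than it is: solving the Stokes-type problem for $v$ and then pairing with a basis of $N(\bar L^{*})$ would in general produce compatibility conditions involving the geometry-dependent, non-explicit functional $K_1v$, and nothing in your outline explains why such terms disappear.

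The paper closes precisely this gap with elementary vector algebra, never solving for $v$ and never computing $N(\bar L^{*})$. Writing the preimage equation as the system \eqref{Lieq:motion1}, one dots \eqref{Lieq:motion1}$_4$ with $q$ to get $z_0\cdot q=0$, and dots \eqref{Lieq:motion1}$_3$ with $\alpha q$: the fluid term drops out automatically because $[\alpha q\times\I\cdot(\omega-\a)]\cdot q\equiv 0$ --- this is the observation your ``delicate point'' calls for; in your language, the relevant cokernel functionals (morally $(0,0,q)$ and $(0,\alpha q,-\beta^2\e_3)$) have zero fluid component, so $v$ and $\a$ never enter. Combining the resulting identity with the steady-state relation \eqref{eq:identity} and with \eqref{Lieq:motion1}$_4$ eliminates all remaining unknowns and yields $\beta^2 z_0\cdot\e_3=\alpha q\cdot\I\cdot\omega_0$; substituting the kernel parametrization $z_0=t_0\e_1-\tfrac{2\mu_0}{\alpha}q_3\e_3$, $\omega_0=\alpha z_0+\mu_0 q$ together with the $\mathsf{SP}_1$ relations $q_2=0$, $q_1^2+q_3^2=1$, $q_3=-\beta^2/(\alpha^2(\lambda_3-\lambda_1))$ gives $\mu_0\bigl(\lambda_1-\tfrac{3\beta^4}{\alpha^4(\lambda_3-\lambda_1)}\bigr)=0$, so \eqref{eq:condition-beta-alpha-lambda-sp1} forces $\mu_0=0$ and then $t_0=0$. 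Your outline is compatible with this, but until you exhibit these two functionals (or equivalently compute the determinant), the proof is incomplete at its only nontrivial step.
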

\begin{proof}
The interpolation inequality \eqref{eq:interpolation-X_alpha} implies that for all $\varepsilon>0$, there exists a constant $c_\varepsilon>0$ such that for all $u\in X_\alpha$
\[
\norm{F'(\bar u)u}_0\le c_\varepsilon\norm{u}_0+\varepsilon\norm{Au}_0.
\]
By the properties of $A$ outlined in Proposition \ref{prop:A-satisfies-H1}, it follows that
\[
\norm{(\lambda+A)^{-1}}_0\le \frac{M}{\lambda}\quad \text{for all }\lambda>0,
\]
and some constant $M>0$. We then apply \cite[Chapter 4, Section 3, Theorem 3.17]{kato} to state that $L$ has a compact resolvent and, thus, a discrete spectrum. In particular, $\lambda=0$ is an isolated eigenvalue. It remains to show that $X= N(L) \oplus R(L)$. 

Since the operator \( A \) in \eqref{eq:motion-abstract} is closed and invertible, it is also a Fredholm operator of index zero. In addition, $A^{-1}$ is compact and $F\in C^1(X_\alpha;X)$. So, $-F'(\bar u)$ is $A$-compact, and by \cite[Chapter 4, Section 5, Theorem 5.26]{kato}, 
the operator
$L$ defined in \eqref{eq:linear-around-steady} is also Fredholm of index zero. To conclude that $X= N(L) \oplus R(L)$, it then suffices to show that \( N(L) \cap R(L) = \{0\} \).

Let \( (0, \omega_0, z_0) \in N(L)\cap R(L) \). Then, there exists \( (v, \omega, z) \in D(L)=D(A)=X_1 \) satisfying 
    \begin{equation} \label{Lieq:motion1}
        \left\{
        \begin{aligned}
            &\div v = 0 && \text{in } \Omega, \\
            &-\nu \Delta v + \nabla p + 2\alpha q \times v = \omega_0 \times x && \text{in } \Omega, \\
            &\alpha q \times \mathbb{I} \cdot (\omega - \a) + \omega \times \mathbb{I} \cdot  (\alpha q) = \mathbb{I} \cdot \omega_0 + \beta^2 \e_3 \times z, \\
            &\alpha q \times z + \omega \times q = z_0, \\
            &v = 0 && \text{on } \partial \Omega.
        \end{aligned}
        \right.
    \end{equation}
    Our goal is to show that \( \omega_0 = z_0 = 0 \).

    From \eqref{eq:N(L)-spanned}, it follows that
    \[\left.\begin{aligned}
        z_0 &= t_0\e_1-\frac{2\mu_0}{\alpha} q_3 \e_3
        \\
        \omega_0 &= \alpha z_0 + \mu_0 q\
    \end{aligned}\right\}\quad \text{for some } t_0, \mu_0 \in \mathbb{R}. \]
    Taking the dot product of equation \eqref{Lieq:motion1}$_4$ with \( q \), we obtain:
    \[
        z_0 \cdot q = 0,
    \]
    from which it follows that 
    \begin{equation}\label{eq:t0-ker-range}
        t_0 = \frac{2\mu_0}{q_1 \alpha} q_3^2
    \end{equation}
    and
    \begin{equation}\label{eq:omega0-ker-range}
        \omega_0 = \left( \frac{2\mu_0}{q_1} q_3^2 + \mu_0 q_1\right)\e_1-\mu_0 q_3\e_3.
    \end{equation}
    Taking the dot product of equation \eqref{Lieq:motion1}$_3$ with \( \alpha q \), we get
    \begin{equation} \label{prop_eq}
        \alpha^2 (\omega \times \mathbb{I} \cdot q) \cdot q = \alpha \beta^2 (\e_3 \times z) \cdot q + \alpha q\cdot \mathbb{I} \cdot \omega_0.
    \end{equation}
    Moreover, from \eqref{eq:motion}, it follows that the steady-state solution $\bar s=(0,\alpha q,q)$ must satisfy the identity
    \begin{equation} \label{eq:identity}
        \alpha^2 q \times \mathbb{I} \cdot  q = \beta^2 \e_3 \times q.
    \end{equation}
    Substituting \eqref{eq:identity} into \eqref{prop_eq}, we find
    \[
        \beta^2 [(\omega - \alpha z) \times q] \cdot \e_3 = \alpha q\cdot \mathbb{I} \cdot \omega_0.
    \]
    The latter and \eqref{Lieq:motion1}$_{4}$ imply that  
\begin{equation}\label{3.14}
    \beta^{2} z_{0} \cdot \e_{3} = \alpha q\cdot\I \cdot \omega_{0}.
\end{equation}
    Substituting \eqref{eq:t0-ker-range} and \eqref{eq:omega0-ker-range} in the latter displayed equation, we obtain
    \[
        -\frac{2\beta^2 \mu_0}{\alpha^2} q_3=\lambda_1 q_1 \left( \frac{2\mu_0}{q_1} q_3^2 + \mu_0 q_1 \right) + \lambda_3 q_3 (-\mu_0 q_3).
    \]
    Since \( q_1^2 + q_3^2 = 1 \), and using the characterization in \eqref{eq:def-SP1} for $q_3$, we find that
    \[
        \mu_0\left(\lambda_1-\frac{3\beta^4}{\alpha^4 (\lambda_3 - \lambda_1)}\right)=0.
    \] 
   Condition \eqref{eq:condition-beta-alpha-lambda-sp1} immediately implies that \( \mu_0 = 0 \). This information replaced in \eqref{eq:t0-ker-range} and \eqref{eq:omega0-ker-range} yields \( z_0 =  \omega_0 = 0 \). 
\end{proof}

In order to conclude the proof of Theorem \ref{th:steady-states-normally-stable-hyperbolic}, part \ref{th:normally-sh-SP1}, it remains to show that the spectrum of \( L \) does not contain any purely imaginary points (other than $\lambda=0$). To establish this result, we first prove the following lemma.

\begin{lemma} 
    Let $\Bar{s}=(0,\alpha q,q)\in \mathsf{SP}_1$, with $\alpha \in \R\setminus\{0\}$ and $q\in\mathbb{S}^2$, and consider the functional
    \begin{equation} \label{3.16}
        \mathcal{G}_\delta(u) = 
        \mathcal{G}_\delta(v, \omega, z) := \rho\|v\|_{L^2(\Omega)}^2 - \a \cdot \mathbb{I} \cdot \a + \omega_* \cdot \mathbb{I} \omega_* + \delta |z|^2 - 2\alpha z \cdot \mathbb{I} \omega_*,
    \end{equation}
    where \( \omega_* := \omega - \a \). Then there exists \( \hat \delta \in \mathbb{R} \) such that 
    \begin{equation} \label{3.17}
        \frac{1}{2} \frac{d}{dt} \mathcal{G}_{\hat \delta}(u) + \mu \|\nabla v\|_{L^2(\Omega)}^2 = 0
    \end{equation}
    along solutions $u$ of the linearized problem
    \begin{equation} \label{eq:linear-ODE}
        \begin{cases}
            \displaystyle \frac{du}{dt} + Lu = 0, & t > 0, \\
            \displaystyle u(0) \in X
        \end{cases}
    \end{equation}
    where $L$ is the linearized operator defined in \eqref{eq:linear-around-steady}-\eqref{eq:bar-L}. 
\end{lemma}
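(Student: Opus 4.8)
The strategy is to differentiate $\mathcal{G}_{\delta}$ along a solution of \eqref{eq:linear-ODE} and to select $\hat\delta$ so that every term other than the viscous dissipation cancels. Since $L=E^{-1}\bar L$, equation \eqref{eq:linear-ODE} is equivalent to $E\dot u=-\bar L u$. Writing $u=(v,\omega,z)$ and using the definitions of $E$, $K_1$, $K_2$ in \eqref{eq:operators} together with \eqref{eq:bar-L}, and noting that the second component of $E\dot u$ equals $\I\cdot\dot\omega+K_1\dot v=\I\cdot\dot\omega_*$ (because $K_1\dot v=-\I\cdot\dot\a$ and $\omega_*=\omega-\a$), the system reads
\[
\dot v + \P(\dot\omega\times x) = \P(\nu\Delta v - 2\alpha\, q\times v),
\]
\[
\I\cdot\dot\omega_* = -\bigl(\alpha\, q\times\I\cdot\omega_* + \alpha\,\omega\times\I\cdot q - \beta^2\e_3\times z\bigr),
\qquad
\dot z = -\bigl(\alpha\, q\times z + \omega\times q\bigr).
\]

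Next I would compute $\tfrac12\frac{\d}{\d t}\mathcal{G}_{\delta}$ term by term. For the fluid contribution I would test the first equation with $\rho v$ over $\Omega$: the pressure integral vanishes since $v$ is solenoidal with $v=0$ on $\partial\Omega$; the viscous part gives $\nu\!\int_\Omega\!\rho\, v\cdot\P(\Delta v)\,\d x=-\mu\|\nabla v\|_{L^2(\Omega)}^2$ with $\mu=\nu\rho$; the Coriolis part vanishes because $v\cdot(q\times v)=0$; and $\int_\Omega\rho\, v\cdot\P(\dot\omega\times x)\,\d x=-\dot\omega\cdot\I\cdot\a$. This produces the key relation
\[
\int_\Omega\rho\, v\cdot\dot v\,\d x = -\mu\|\nabla v\|_{L^2(\Omega)}^2 + \dot\omega\cdot\I\cdot\a,
\]
which already supplies the dissipative term in \eqref{3.17}.

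For the finite-dimensional contribution I would first combine $\dot\omega\cdot\I\cdot\a-\a\cdot\I\cdot\dot\a=\a\cdot\I\cdot\dot\omega_*$ (symmetry of $\I$) and use $\a+\omega_*=\omega$ to reduce all the non-dissipative terms of $\tfrac12\frac{\d}{\d t}\mathcal{G}_{\delta}$ to $(\omega-\alpha z)\cdot\I\cdot\dot\omega_* + \delta\, z\cdot\dot z - \alpha\,\dot z\cdot\I\cdot\omega_*$. Substituting the two finite-dimensional equations above and expanding with the triple-product identity $a\cdot(b\times c)=(a\times b)\cdot c$, the terms carrying the factor $\I\cdot\omega_*$ cancel (using $z\times q+q\times z=0$ and $\omega\cdot(\omega\times\I\cdot q)=0$), and the remainder collapses to $\omega\cdot\bigl[(\alpha^2\I\cdot q+\beta^2\e_3-\delta q)\times z\bigr]$. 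Finally, the steady-state identity \eqref{eq:identity} gives $q\times(\alpha^2\I\cdot q+\beta^2\e_3)=0$, so $\alpha^2\I\cdot q+\beta^2\e_3$ is collinear with $q$; therefore the choice $\hat\delta:=\alpha^2\, q\cdot\I\cdot q+\beta^2 q_3$ annihilates the bracket and yields \eqref{3.17}.

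I expect the main difficulty to be the finite-dimensional bookkeeping: after substituting the ODEs one must correctly manipulate roughly a dozen scalar triple products and verify that everything except the $\omega\cdot[(\cdots)\times z]$ expression cancels. The conceptual crux is recognizing that the collinearity of $\alpha^2\I\cdot q+\beta^2\e_3$ with $q$—a direct consequence of the steady-state constraint \eqref{eq:identity}—is precisely what makes the remaining bracket vanish and simultaneously pins down the value of $\hat\delta$; the rest is careful vector algebra.
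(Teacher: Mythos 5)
Your proposal is correct and takes essentially the same route as the paper: the paper derives four separate energy identities (testing the fluid equation with $\rho v$, the angular-momentum equation with $z$ and with $\omega_*+\a$, the Poisson equation with $\I\cdot\omega_*$ and with $z$) and combines them with weights $1,1,\delta,-\alpha$, which is exactly your direct differentiation of $\mathcal{G}_\delta$, and both arguments conclude by using the steady-state identity \eqref{eq:identity} to infer that $\alpha^2\I\cdot q+\beta^2\e_3$ is collinear with $q$, which fixes $\hat\delta$ (your explicit value $\hat\delta=\alpha^2 q\cdot\I\cdot q+\beta^2 q_3$ is the correct one). The only point the paper addresses that you omit is the brief remark that $L$ is sectorial, so solutions of the linear problem are regular enough to justify the differentiation and integration by parts; this is a minor omission, not a gap in the idea.
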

\begin{proof}
We start by noticing that $L=A-F'(\bar u)$ is a sectorial operator because $A$ is sectorial (by Proposition \ref{prop:A-satisfies-H1}) and $F\in C^1(X_\alpha;X)$ (see Remark \ref{rm:stability-0}). 
Thus, \eqref{eq:linear-ODE} admits a unique solution 
\[
u\in C((0,\infty);X_\alpha))\cap C((0,\infty),X_1)\cap C^1((0,\infty);X).
\]
Writing $u=(v,\omega,z)$, we see that equation \eqref{eq:linear-ODE} can be equivalently rewritten as the following system: 
\begin{equation} \label{3.19}
\left\{
\begin{aligned}
    &\div v = 0 &&\text{in } \Omega \times (0,\infty), \\
    &\partial_t v + (\dot{\omega}_* + \dot{\a}) \times x + 2\omega \times v = \nu \Delta v - \nabla p &&\text{in } \Omega \times (0,\infty), \\
    &\mathbb{I} \cdot \dot{\omega}_* + \alpha q \times \mathbb{I} \cdot \omega_* + \alpha(\omega_* + \a) \times \mathbb{I} \cdot q = \beta^2 \e_3 \times z &&\text{in } (0,\infty), \\
    &\dot{z} + (\omega_* + \a) \times q + \alpha q \times z = 0 &&\text{in } (0,\infty), \\
    &v = 0 &&\text{on } \partial\Omega \times (0,\infty),
\end{aligned}
\right.
\end{equation}
where \( \omega_* = \omega - \a \). 

We begin by taking the dot product of \eqref{3.19}$_2$ with \( \rho v \), and integrate the resulting equation over \( \Omega \). Using integration by parts with the boundary condition \eqref{3.19}$_5$,
it yields
\begin{equation} \label{3.20}
    \frac{1}{2} \left( \frac{d}{dt} \|\rho^{1/2}v\|_{L^2(\Omega)}^2 - \a \cdot \mathbb{I}\cdot \a \right) - \dot{\omega}_* \cdot \mathbb{I} \cdot \a + \mu \|\nabla v\|_{L^2(\Omega)}^2 = 0.
\end{equation}

Next, we take the dot product of \eqref{3.19}$_3$ with \( z \), and of \eqref{3.19}$_4$ with \( \mathbb{I} \cdot \omega_* \), and add the resulting equations. This gives
\begin{equation} \label{3.21}
    \frac{d}{dt} (z \cdot \mathbb{I} \cdot \omega_*) + [(\omega_* + \a) \times q] \cdot (\mathbb{I} \cdot \omega_*) + \alpha[(\omega_* + \a) \times \mathbb{I} \cdot q] \cdot z = 0.
\end{equation}

Taking the dot product of \eqref{3.19}$_3$ with \( \omega_* + \a \), we obtain:
\begin{equation} \label{3.22}
    \frac{1}{2} \frac{d}{dt} (\omega_* \cdot \mathbb{I} \cdot \omega_*) + \alpha (q \times \mathbb{I} \cdot \omega_*) \cdot (\omega_* + \a) + \a \cdot \mathbb{I} \cdot \dot{\omega}_* - \beta^2 (\e_3 \times z) \cdot (\omega_* + \a) = 0.
\end{equation}

Finally, taking the dot product of \eqref{3.19}$_4$ with \( z \), we obtain:
\begin{equation} \label{3.23}
    \frac{1}{2} \frac{d}{dt} |z|^2 + [(\omega_* + \a) \times q ]\cdot z = 0.
\end{equation}

Now we consider the linear combination:
\eqref{3.20} + \eqref{3.22} + $\delta$ \eqref{3.23} - $\alpha$ \eqref{3.21} and deduce that 
\begin{equation}
    \frac{1}{2} \frac{d}{dt} \mathcal{G}_\delta + \mu \|\nabla v\|_{L^2(\Omega)}^2 = (\omega_* + \a) \cdot \left( \alpha^2 \mathbb{I} \cdot q + \beta^2 \e_3 - \delta q \right) \times z,
\end{equation}
with $\mathcal G_\delta$ defined in equation \eqref{3.16}. Finally, using \eqref{eq:identity}, 
we find \(\hat \delta \in \mathbb{R} \) such that 
\[
    \alpha^2 \mathbb{I} \cdot q + \beta^2 \e_3 = \hat \delta q,
\]
and this concludes the proof.
\end{proof}
Now, we are ready to prove that \( \sigma(L) \cap i\mathbb{R} = \{0\} \).

\begin{proposition}\label{prop:no-imaginary-eigen}
 Let $\Bar{s}=(0,\alpha q,q)\in \mathsf{SP}_1$, with $\alpha \in \R\setminus\{0\}$ and $q\in\mathbb{S}^2$, and consider the linearized operator $L$ defined in \eqref{eq:linear-around-steady}-\eqref{eq:bar-L}. 
 Then, 
     \begin{align*}
         \sigma(L) \cap i\mathbb{R} = \{0\}.
     \end{align*}
\end{proposition}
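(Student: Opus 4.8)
The plan is to combine the energy identity \eqref{3.17} with the discreteness of the spectrum of $L$. As noted in the proof of Proposition \ref{prop:0-semi-simple}, $L$ has compact resolvent, so $\sigma(L)$ consists entirely of isolated eigenvalues; moreover $0\in\sigma(L)$ by Proposition \ref{0ineigenvalue}. Hence it suffices to rule out eigenvalues of the form $\lambda = i\tau$ with $\tau\in\R\setminus\{0\}$. I would argue by contradiction: suppose $i\tau$ is such an eigenvalue, with a (complexified) eigenfunction $u=(v,\omega,z)\neq 0$.

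Since $L$ is a real operator, $w(t):=\Re(\exp^{-i\tau t}u)$ is a real, $T$-periodic solution ($T=2\pi/|\tau|$) of the linearized problem \eqref{eq:linear-ODE}, lying in the regularity class to which the preceding lemma applies. Applying \eqref{3.17} to $w$ and integrating over one period, the quadratic functional $\mathcal G_{\hat\delta}$ returns to its initial value by periodicity, so its contribution cancels and one is left with $\mu\int_0^T\norm{\nabla w_v(t)}_{L^2(\Omega)}^2\,\d t=0$, where $w_v$ denotes the fluid component of $w$. Since $\mu>0$, this forces $\nabla w_v(t)\equiv 0$, and together with the no-slip condition and Poincaré's inequality, $w_v(t)\equiv 0$. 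Writing $w_v(t)=\cos(\tau t)\Re(v)+\sin(\tau t)\Im(v)$ and evaluating at $t=0$ and $t=\pi/(2\tau)$ yields $\Re(v)=\Im(v)=0$, i.e. the fluid component of the eigenfunction vanishes, $v=0$.

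It then remains to substitute $v=0$ into the eigenvalue equation $\bar Lu=\lambda Eu$ (with $\bar L$ and $E$ as in \eqref{eq:bar-L} and \eqref{eq:operators}) and reduce it to a finite-dimensional algebraic system. Because $v=0$ we have $\a=-\I^{-1}\cdot\int_\Omega\rho\,x\times v\,\d x=0$ and $Eu=(\P(\omega\times x),\I\cdot\omega,z)$. The fluid component of $\bar Lu$ vanishes when $v=0$ (cf. \eqref{Lieq:motion}), while that of $\lambda Eu$ equals $\lambda\,\P(\omega\times x)$; hence $0=\lambda\,\P(\omega\times x)$, and since $\lambda\neq 0$ and $\P(\omega\times x)=0$ forces $\mathrm{curl}(\omega\times x)=2\omega=0$, I obtain $\omega=0$. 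The angular-momentum component then collapses to $\beta^2\e_3\times z=0$, so $z=z_3\e_3$; and the gravity component becomes $\alpha\,q\times z=\lambda z$, i.e. $-\alpha z_3 q_1\e_2=\lambda z_3\e_3$ after using $q=q_1\e_1+q_3\e_3$. Comparing the $\e_3$-components gives $\lambda z_3=0$, whence $z_3=0$ and $z=0$. Thus $u=0$, contradicting $u\neq 0$, and the claim $\sigma(L)\cap i\R=\{0\}$ follows.

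I expect the only delicate point to be the passage through the complexification in the second step: one must justify that the real part of the complex eigensolution is a genuine real solution of \eqref{eq:linear-ODE} of the regularity required by the lemma, and that the vanishing of the time-averaged dissipation (available precisely because a purely imaginary $\lambda$ gives a non-decaying periodic orbit) propagates back to the vanishing of both $\Re(v)$ and $\Im(v)$. Once $v=0$ is secured, the remaining reduction is purely algebraic and elementary; note in particular that the condition \eqref{eq:condition-beta-alpha-lambda-sp1}, essential for semi-simplicity, is not needed for this statement.
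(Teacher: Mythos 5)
Your proof is correct and follows essentially the same route as the paper: both hinge on integrating the energy identity \eqref{3.17} over one period of the periodic solution generated by a putative nonzero purely imaginary eigenvalue, which kills the dissipation and forces $v\equiv 0$, after which only a finite-dimensional reduction remains. The difference is only in the finale --- the paper stays in the time domain (taking the curl of $\dot{\omega}_*\times x=-\nabla p$ and using a zero-mean normalization of the periodic solution), while you do the algebra directly on the complexified eigenvalue equation $\bar L u=\lambda Eu$; both are valid, and your explicit appeal to the compact resolvent to reduce $\sigma(L)\cap i\R$ to eigenvalues makes the link between spectrum and periodic solutions slightly more transparent.
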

\begin{proof}
    To prove the stated property, we show that there is no bounded periodic solution of \eqref{eq:linear-ODE} other than the trivial one. We argue by contradiction, and assume that system \eqref{3.19} admits a time-periodic solution \((v, \omega_{*}, z)\), with period \(T > 0\), such that
    \begin{equation}\label{3.25}
        \int^{T}_{0} v(x,t) \, dt = \int^{T}_{0} \omega_{*}(x,t) \, dt = \int^{T}_{0} z(x,t) \, dt = 0.
    \end{equation}
    Integrating \eqref{3.17} from \(0\) to \(T\) and using periodicity, we deduce that 
    \begin{equation}\nonumber
        \int^{T}_{0} \lVert \nabla v \rVert_{L^2(\Omega)}^2 \, dt = 0.
    \end{equation}
    This implies that \(v \equiv 0\) by \eqref{3.19}$_{5}$ and Poincar\'e inequality. In particular, $\a=0$. Using this information in \eqref{3.19}$_{2}$, we have that 
    \[
    \dot{\omega}_{*} \times x = - \nabla p.
    \]
    Applying the curl operator on both sides of the latter equation, we obtain that \(\dot{\omega}_{*} = 0\), that is, \(\omega_{*}\) is constant. Equation \eqref{3.25} then implies that \(\omega_{*} = 0\). The latter, in conjunction with \eqref{3.19}$_{3}$, gives \(z = z_{3} \e_3\). On the other side, from \eqref{3.19}$_{4}$, we also have that \(q \cdot \dot{z} = 0\). Therefore, \(\dot{z}_{3} = 0\), which allows us to conclude that \(z_{3}\) is constant, and then zero by \eqref{3.25}.
\end{proof}
\begin{remark}
    It is possible to provide a more precise characterization of the stability properties of the steady states in Theorem \ref{th:steady-states} in terms of physical quantities like the mass distribution within $\mathscr{S}$. We avoid this here because we are mainly interested in the long-time behaviour of generic trajectories, and refer the interested reader to \cite{Dutta-thesis}. 
\end{remark}

Concerning the time-dependent solutions, \cite{GaldiMazzoneMohebbi18} provides the state of the art on the existence of global in time solutions to \eqref{eq:motion}. Let us start by recalling the definition of weak solutions.

The weak formulation of problem~\eqref{eq:motion} is derived in the standard manner: by taking the dot product of equation~\eqref{eq:motion}$_1$ with a test function \( \psi \in {_0}H^1_{2,\sigma}(\Omega)\), and then integrating (by parts) the resulting expression over the space-time domain \( \Omega \times (0, t) \). This procedure yields the following equations:

\begin{equation} \label{Weak_form_velocity}
\begin{aligned}
( v(t), \psi) &+  \omega(t) \cdot \int_\Omega x \times \psi \, dx \\
&+ \int_0^t \left\{ (v_i\partial_{x_i}v, \psi) 
+ 2 ((\omega \times v, \psi) + \nu (\nabla v, \nabla \psi) \right\} \\
&= ( v(0), \psi) +  \omega(0) \cdot \int_\Omega x \times \psi \, dx, \mbox{ for all } t \in (0, \infty).
\end{aligned}
\end{equation}

Moreover, integrating equations \eqref{eq:motion}$_3$ and \eqref{eq:motion}$_4$ over \( (0, t) \), we obtain:

\begin{multline} \label{Weak_form_angular}
\I \cdot (\omega(t)- \a(t)) = \I \cdot (\omega(0)-\a(0)) 
\\
- \int_0^t \left[ \omega \times (I \cdot (\omega-\a) - \beta^2 (\e_3 \times \gamma) \right], \mbox{ for all } t \in (0, \infty),
\end{multline}
and
\begin{equation} \label{Weak_form_gravity}
\gamma(t) = \gamma(0) - \int_0^t \omega \times \gamma \, , \mbox{ for all } t \in (0, \infty).
\end{equation}

\begin{definition}\label{def:weak-solutions}
    The triplet $(v, \omega, \gamma)$ is a weak solution to \eqref{eq:motion} if it meets the following requirements:
    \begin{enumerate}[label=\rm{(\alph*)},ref=(\alph*)]
    \item \( v \in C_w([0, \infty); L^2_\sigma(\Omega)) \cap L^\infty(0, \infty;  L^2_\sigma(\Omega)) \cap L^2(0, \infty; {_0}H^1_{2,\sigma}(\Omega))\). 
    \item $\omega \in C^0([0, \infty);\R^3) \cap C^1((0, \infty);\R^3)$.
    \item $\gamma \in C^0([0, \infty);\mathbb{S}^2)\cap C^1((0, \infty); \mathbb{S}^2)$. 
    \item $(v, \omega, \gamma)$ satisfies the {\em strong energy inequality} for all \( t \geq s \) and for almost all \( s \geq 0 \), including \( s = 0 \),
    \begin{equation}\label{eq:strong-energy}
    \mathsf{E}(t) + \mathsf{U}(t) + 2\nu \int_s^t \| \nabla v(\tau) \|^2_{L^2(\Omega)} \; d\tau \leq \mathsf{E}(s) + \mathsf{U}(s),\end{equation}
    where,
    \[
    \mathsf{E}(t) := \rho\norm{v(t)}^2_{L^2(\Omega)} -\a\cdot\I\cdot\a+ (\omega(t)-\a)\cdot \I \cdot (\omega(t)-\a),
    \] 
    and $\mathsf{U}(t) := -2\beta^2 \gamma(t) \cdot \e_3$ .
\item The triplet \( (v, \omega, \gamma) \) satisfies equations~\eqref{Weak_form_velocity}, \eqref{Weak_form_angular}, and~\eqref{Weak_form_gravity}.
\end{enumerate}
\end{definition}

We note that the functional 
\begin{multline*}
(v_1,v_2)\in L^2(\Omega)\times L^2(\Omega)
\\
\mapsto \int_{\Omega}\rho v_1\cdot v_2\; \d x
-\left(\int_{\Omega}\rho x\times v_1\; dx\right)\cdot\I\cdot \left(\int_{\Omega}\rho x\times v_2\; dx\right)\in \R
\end{multline*}
defines an inner product in $L^2(\Omega)$ (see \cite[Chapter 1,
Section 7.2.2 and 7.2.3]{KoKr} for a proof). The associated norm
\begin{equation*}
\begin{split}
\norm{\cdot}:\;v\in L^2(\Omega)\mapsto \norm{v}&:=\rho \norm{v}^2_{L^2(\Omega)} -\left(\int_{\Omega}\rho x\times v\; dx\right)\cdot\I\cdot \left(\int_{\Omega}\rho x\times v\; dx\right)
\\
&\quad \equiv\rho \norm{v}^2_{L^2(\Omega)} -\a \cdot \I \cdot \a
\end{split}
\end{equation*}
is equivalent to $\norm{\cdot}_{L^2(\Omega)}$. 
Specifically, there exists a constant $c\in (0,1)$ such that 
\begin{equation}\label{eq:equivalent-norm}
c\rho\norm{v}_{L^2(\Omega)}\le \norm{v}\le \rho\norm{v}_{L^2(\Omega)}. 
\end{equation}
In view of these considerations, we note that 
\begin{equation}\label{eq:equivalent-energy}
\begin{aligned}
 \mathsf{E}=\norm{v}+(\omega-\a)\cdot\I\cdot (\omega-\a). 
\end{aligned}
\end{equation}

The class of weak solutions in the sense of Definition \ref{def:weak-solutions} is nonempty as shown in \cite[Proposition 5.2.]{GaldiMazzoneMohebbi18}. Furthermore, such solutions enjoy the following ``regularization'' properties. 

\begin{proposition}\label{prop:weak-strong}
Let \( u:= (v, \omega, \gamma) \) be a weak solution corresponding to initial data in $L^2_\sigma(\Omega) \times \R^3\times \R^3$. Then, there exists \( t_0 = t_0(u) > 0 \) such that for all \( T > 0 \):
\begin{equation}\label{eq:regularity}
\begin{split}
&v \in C^0([t_0, t_0 + T]; {_{0}H}^1_{2}(\Omega)) \cap L^2(t_0, t_0 + T; {_{0}H}^2_{2,\sigma}(\Omega))
\cap H^1_2(t_0,t_0+T;L^2_\sigma(\Omega)),
\\
&\omega\in C^1([t_0, t_0 + T];\R^3), \quad \gamma \in C^1([t_0, t_0 + T];\mathbb{S}^2).
\end{split}
\end{equation}

Moreover, there exists \( p \in L^2(t_0, t_0 + T; H^1_2(\Omega)) \), for all \( T > 0 \), such that the quadruple \( (v, \omega, \gamma, p) \) satisfies equation \eqref{eq:motion} almost everywhere in \( (t_0, \infty) \). Finally,
\begin{align}
\label{eq:vto0-nabla}
&\lim_{t \to \infty} \norm{v(t)}_{H^1_2(\Omega)}=0
\\
& \label{eq:vto0-partial-t}
\lim_{t\to\infty}\norm{\partial_t v(t)}_{L^2(\Omega)}= 0.
\end{align}
\end{proposition}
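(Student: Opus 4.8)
The plan is to show that the weak solution becomes a strong solution after a finite time and then decays, extracting both facts from the dissipation built into the strong energy inequality \eqref{eq:strong-energy}.

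First I would produce a ``good time'' at which the fluid component is small in ${_0}H^1_{2,\sigma}(\Omega)$. Since $|\gamma(t)|=1$, the term $\mathsf{U}(t)=-2\beta^2\gamma(t)\cdot\e_3$ satisfies $\mathsf{U}(t)\ge -2\beta^2$, while $\mathsf{E}(t)\ge 0$ by \eqref{eq:equivalent-energy} and the positivity of $\I$. Hence \eqref{eq:strong-energy} with $s=0$ bounds $\mathsf{E}(t)+\mathsf{U}(t)$ uniformly and yields the finite-dissipation estimate
\[
\int_0^\infty \norm{\nabla v(\tau)}^2_{L^2(\Omega)}\,\d\tau \le \frac{1}{2\nu}\big(\mathsf{E}(0)+\mathsf{U}(0)+2\beta^2\big)<\infty .
\]
Consequently $\liminf_{t\to\infty}\norm{\nabla v(t)}_{L^2(\Omega)}=0$, so there is a sequence $s_n\to\infty$ with $\norm{\nabla v(s_n)}_{L^2(\Omega)}\to 0$; the bound on $\mathsf{E}$ keeps $\omega(s_n)$ bounded and $|\gamma(s_n)|=1$. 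I would set $t_0:=s_n$ for $n$ large, so that $u(t_0)=(v(t_0),\omega(t_0),\gamma(t_0))\in X_\alpha$ has small fluid component.

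Next I would invoke the abstract semilinear theory for \eqref{eq:motion-abstract}. By Proposition \ref{prop:A-satisfies-H1} the operator $A$ is sectorial, and $F$ is $C^1$ and bilinear and bounded on $X_\alpha$ for $\alpha$ in the range \eqref{eq:range-alpha}; therefore \eqref{eq:motion-abstract} has a unique local strong solution issuing from $u(t_0)$, whose existence time is bounded below in terms of $\norm{u(t_0)}_{X_\alpha}$, and parabolic smoothing furnishes exactly the regularity \eqref{eq:regularity}. The pressure $p\in L^2(t_0,t_0+T;H^1_2(\Omega))$ is then recovered by applying $I-\P$ to the momentum equation \eqref{eq:motion}$_2$ and using de Rham's theorem. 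By weak--strong uniqueness (the strong solution is admissible as a test field and satisfies the energy balance), the weak solution coincides with this strong solution on its interval of existence.

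The remaining and \emph{main} task is to show the strong solution is global, i.e. exists for every $T>0$, which is delicate because in three dimensions strong solutions are known to be global only for small data; here smallness is available only at $t_0$ and must be propagated. On the strong-solution interval I would derive the enstrophy inequality, testing the momentum equation with the Stokes operator applied to $v$ and absorbing the rigid-body coupling terms --- whose coefficients are bounded because $\mathsf{E}$ is bounded --- into the dissipation; the standard three-dimensional estimate of the convective term gives
\[
\frac{\d}{\d t}\norm{\nabla v}^2_{L^2(\Omega)} + \nu\,\norm{\Delta v}^2_{L^2(\Omega)} \le C\big(\norm{\nabla v}^6_{L^2(\Omega)}+\norm{\nabla v}^2_{L^2(\Omega)}\big).
\]
Choosing $t_0$ with $\norm{\nabla v(t_0)}_{L^2(\Omega)}$ small enough and using the finiteness of $\int_{t_0}^\infty\norm{\nabla v}^2\,\d t$, a continuation argument shows that $\norm{\nabla v(t)}_{L^2(\Omega)}$ stays small, so the cubic term is absorbed by the dissipation and no blow-up can occur; hence the solution is global. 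The same inequality together with the integrability of $\norm{\nabla v}^2$ forces $\norm{\nabla v(t)}_{L^2(\Omega)}\to 0$, which is \eqref{eq:vto0-nabla}. Finally, \eqref{eq:vto0-partial-t} follows by differentiating \eqref{eq:motion}$_2$ in time (equivalently, estimating $\tfrac{\d}{\d t}\norm{\partial_t v}^2_{L^2(\Omega)}$ and exploiting $\partial_t v\in L^2$ in time), concluding $\norm{\partial_t v(t)}_{L^2(\Omega)}\to 0$ from the decay of $v$ in ${_0}H^1_{2,\sigma}(\Omega)$ and the boundedness of $\omega,\gamma$. The principal obstacle throughout is precisely this propagation-of-smallness/global-regularity step.
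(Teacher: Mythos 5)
Your first two stages---finding a good time $t_0$ from the finite dissipation in \eqref{eq:strong-energy}, starting a local strong solution there, recovering the pressure, invoking weak--strong uniqueness, and propagating smallness of $\norm{\nabla v}_{L^2(\Omega)}$ to get global regularity and \eqref{eq:vto0-nabla}---are sound and follow the standard route. In fact this is precisely the content of \cite[Proposition 5.5]{GaldiMazzoneMohebbi18}, which the paper simply cites for \eqref{eq:regularity} and \eqref{eq:vto0-nabla}; the paper's own proof is devoted almost entirely to the remaining limit \eqref{eq:vto0-partial-t}. This is where you have a genuine gap: you identify the propagation-of-smallness step as ``the principal obstacle,'' but that step is classical, and you dispatch \eqref{eq:vto0-partial-t} in a single sentence, claiming it follows ``from the decay of $v$ in ${_0}H^1_{2,\sigma}(\Omega)$ and the boundedness of $\omega,\gamma$.'' That deduction is not valid: decay of $v$ in $H^1_2(\Omega)$ gives no pointwise-in-time control of $\partial_t v$, and the information $\partial_t v\in L^2$ in time from \eqref{eq:regularity} is only on finite intervals, hence yields no decay at infinity.

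Concretely, what is missing is a complete energy theory for $w:=\partial_t v$. One must (i) time-differentiate the \emph{whole coupled system}, not just \eqref{eq:motion}$_2$: differentiating the momentum equation produces $\dot\xi\times x$, $2\xi\times v$ and the forcing $-g\,\omega\times\gamma$ (from $\dot\gamma$), so the differentiated balance of angular momentum must be carried along and the two equations combined (via the term $\a_w\cdot\I\cdot\dot\xi$) to obtain a closed energy identity, as in \eqref{eq:motion-differentiated}--\eqref{eq:energy-partial-t-v}; (ii) justify that $w(t_0)\in L^2_\sigma(\Omega)$ at all, which a weak solution does not provide a priori and which the paper obtains from $v(t_0)\in{_{0}H}^2_{2,\sigma}(\Omega)$ through the estimate \eqref{eq:initial-regularity-partial-t-v}, together with a Faedo--Galerkin construction and weak--strong uniqueness to identify the more regular solution with $u$; (iii) rule out blow-up of the $w$-problem, which the paper does via the bound $\norm{v}_{H^2_2(\Omega)}\le k_5(\norm{\partial_t v}_{L^2(\Omega)}+\norm{\nabla v}^3_{L^2(\Omega)}+1)$ in \eqref{eq:estimate-H22-v}; and (iv) extract the decay of $\norm{w}_{L^2(\Omega)}$ from the resulting differential inequality \eqref{eq:energy-partial-t-estimated}, which has a non-vanishing lower-order forcing $k_6\norm{w}_{L^2(\Omega)}$ and therefore requires a Gr\"onwall-type lemma of the kind in \cite[Lemma 2.3.4.]{Ma2}, not a soft integrability argument. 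Without these four steps the statement \eqref{eq:vto0-partial-t} is unproven, and it is exactly the part of the proposition that is new relative to the cited literature.
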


\begin{proof}
Properties \eqref{eq:regularity} and \eqref{eq:vto0-nabla} have been proved in \cite[Proposition 5.5.]{GaldiMazzoneMohebbi18}. It remains to show \eqref{eq:vto0-partial-t}. 

Without loss of generality, we assume that $v(t_0)\in {_{0}H}^2_{2,\sigma}(\Omega)$ and 
\begin{equation}\label{eq:uniform-H12-v}
\norm{v}_{H^1_2(\Omega)}\le K\qquad\text{ for all }t\ge t_0,
\end{equation}
and some positive constant $K$. A smooth fluid velocity $v$ in \eqref{eq:motion} (or its smooth approximation in the Faedo-Galerkin method starting at $v(t_0)$) satisfies the estimate \cite[Equation (5.21)]{GaldiMazzoneMohebbi18}, that is, 
\begin{equation}\label{eq:ode-nabla-v}
\frac{\d}{\d t}\norm{\nabla v}^2_{L^2(\Omega)}+c_1\norm{\partial_t v}^2_{L^2(\Omega)}+c_2\norm{v}^2_{H^2_2(\Omega)}\le c_3\left(\norm{\nabla v}^2_{L^2(\Omega)} + 1\right).
\end{equation}

In addition, the quadruple $(w:=\partial_t v,\xi:=\dot\omega,\gamma,\hat p=\partial_t p)$ satisfies the following equations 
\begin{equation}\label{eq:motion-differentiated}
\left\{
\begin{aligned}
   &\div w=0\qquad&&\text{in }\Omega\times(t_0,t_0+T),
   \\
   &\partial_t w+\dot\xi\times x+w_i\partial_{x_i}v+v_i\partial_{x_i}w+2\xi\times v \qquad&&\text{in }\Omega\times(t_0,t_0+T),
   \\
   &\qquad\qquad\qquad\quad 
   +2\omega\times w=\nu\Delta w-\nabla \hat p-g\omega\times\gamma \qquad&&
   \\
   &\I\cdot(\dot\xi-\dot \a_w)+\xi\times\I\cdot(\omega-\a_w) \qquad &&\text{in }(t_0,t_0+T),
   \\
   &\qquad\qquad\qquad\quad  
   +\omega\times\I\cdot(\xi-\a_w)=-\beta^2\e_3\times \left(\omega\times\gamma\right) \qquad 
   \\
   &w=0\qquad&&\text{on }\partial \Omega\times (t_0,t_0+T),
   \\
   &a_w:=-\I^{-1}\cdot\left(\int_{\Omega}\rho x\times w\;\d x\right)&&\text{in }(t_0,t_0+T),
\end{aligned}
\right.    
\end{equation}
obtained by (formally) taking the time derivative in \eqref{eq:motion}$_{1,2,3,5}$. Let us (again, formally) take the dot product of \eqref{eq:motion-differentiated}$_2$ with $\rho w$ and integrate the resulting equation over $\Omega$, using integration by parts and \eqref{eq:motion-differentiated}$_{1,4}$, we find that 
\[
\frac\rho2 \frac{\d }{\d t}\norm{w}^2_{L^2(\Omega)}-\a_w\cdot\I\cdot\dot\xi
+\int_{\Omega}\left(w_i\partial_{x_i}v\right)\cdot w\; \d x
+2\int_{\Omega}\rho (\xi\times v)\cdot w\;\d x
=-\mu\norm{\nabla w}^2_{L^2(\Omega)}.
\]
Next, we take the dot product of \eqref{eq:motion-differentiated}$_3$ by $\a_w$ and we see that
\[
\a_w\cdot \I\cdot\dot\xi=\frac 12 \frac{\d}{\d t}(\a_w\cdot\I\cdot\a_w)
-[\xi\times\I\cdot(\omega-\a_w)+\omega\times\I\cdot(\xi-\a_w)+\beta^2\e_3\times \left(\omega\times\gamma\right)]\cdot\a_w.
\]
Adding side by side the latter displayed equations, we obtain 
\begin{equation}\label{eq:energy-partial-t-v}
\frac 12\frac{\d }{\d t}
\norm{w}^2+\mu\norm{\nabla w}^2_{L^2(\Omega)}=\mathsf{F},
\end{equation}
where
 
\begin{multline}\label{eq:F-energy-partial-t-v}
\mathsf{F}(t):=-\int_{\Omega}\left(w_i\partial_{x_i}v\right)\cdot w\; \d x
-2\int_{\Omega}\rho (\xi\times v)\cdot w\;\d x
\\
-[\xi\times\I\cdot(\omega-\a_w)+\omega\times\I\cdot(\xi-\a_w)+\beta^2\e_3\times \left(\omega\times\gamma\right)]\cdot\a_w. 
\end{multline}
Let us estimate each term in $\mathsf F$. Applying 
H\"older's inequality, interpolation inequality, Sobolev embedding theorem in conjunction with Poincar\'e inequality, and Young's inequality, we observe that
\[\begin{split}
\left|\int_{\Omega}\left(w_i\partial_{x_i}v\right)\cdot w\; \d x\right|
&\le \norm{w}^2_{L^4(\Omega)}\norm{\nabla v}_{L^2(\Omega)}
\le c_4\norm{w}^{1/2}_{L^2(\Omega)}\norm{\nabla w}^{3/2}_{L^2(\Omega)}\norm{\nabla v}_{L^2(\Omega)}
\\
&\le \frac{\mu}{2}\norm{\nabla w}^{2}_{L^2(\Omega)}+c_5\left(\norm{w}^{6}_{L^2(\Omega)}+\norm{\nabla v}^6_{L^2(\Omega)}\right). 
\end{split}\]
For the remaining terms, we use \eqref{eq:motion}$_3$, the fact that $\gamma\in \mathbb{S}^2$, the strong energy inequality \eqref{eq:strong-energy} and \eqref{eq:equivalent-norm}, to conclude that 
\[
|\mathsf{F}(t)|\le \frac{\mu}{2}\norm{\nabla w}^{2}_{L^2(\Omega)} +c_6
\left(\norm{w}^{6}+\norm{\nabla v}^6_{L^2(\Omega)} + 1\right). 
\]
So, $w$ satisfies the following energy estimate
\begin{equation}\label{eq:ode-time-derivative}
\frac{\d }{\d t}\norm{w}^2+\mu\norm{\nabla w}^2_{L^2(\Omega)}\le 
2c_6\left(\norm{w}^{6}+\norm{\nabla v}^6_{L^2(\Omega)} + 1\right). 
\end{equation}
Set $\mathsf{E}_1:=\norm{w}^2+\norm{\nabla v}^2_{L^2(\Omega)}$. Adding \eqref{eq:ode-nabla-v} and \eqref{eq:ode-time-derivative} side by side and using \eqref{eq:equivalent-norm}, we obtain that $\mathsf{E}_1$ enjoys the following differential inequality:
\begin{equation}\label{eq:ode-nabla-partial-t-v}
\frac{\d \mathsf{E}_1}{\d t}
+c_7\left(\norm{w}^2_{L^2(\Omega)}
+\norm{\nabla w}^2_{L^2(\Omega)}
+\norm{v}^2_{H^2_2(\Omega)}\right)\le c_8\mathsf{E}_1^3.
\end{equation}
Integrating, we find that there exists a time $t_*$, with 
\[
t_*\ge \frac{c_9}{\norm{\nabla(v(t_0))}^4_{L^2(\Omega)}+\norm{w(t_0)}^4_{L^2(\Omega)}+1} 
\]
and $c_9$ independent of $t_0$, and functions $f_i\in C([t_0,t_0+t_*);[0,\infty))$, $i=1,2,3$, such that 
\begin{equation}\label{eq:estimate-nabla-partial-t-v}\begin{split}
&\norm{v}_{H^1_2(\Omega)}\le f_1(t),\qquad \norm{w}_{L^2(\Omega)}\le f_2(t)
\\
&\int_{t_0}^t\left(\norm{w}^2_{L^2(\Omega)}
+\norm{\nabla w}^2_{L^2(\Omega)}+\norm{v}^2_{H^2_2(\Omega)}\right)\;\d \tau\le f_3(t)
\quad\text{in }[t_0,t_0+t_*). 
\end{split}\end{equation}
We next claim that $w(t_0)\in L^2_\sigma(\Omega)$ provided that $v(t_0)\in {_{0}H}^2_{2,\sigma}(\Omega)$. Recall that $w=\partial_t v$, then take the dot product of \eqref{eq:motion}$_2$ by $w$ and integrate the resulting equation of $\Omega$. Applying H\"older inequality, \eqref{eq:motion}$_3$ together with \eqref{eq:strong-energy}, we find that 
\begin{equation}\label{eq:initial-regularity-partial-t-v}
\norm{w(t_0)}\le c_{10}\left(\norm{v(t_0)}^3_{H^2_2(\Omega)}+\norm{v(t_0)}_{H^2_2(\Omega)}+1\right).
\end{equation}
In addition, $t_*\ge c_{11}\left(\norm{v(t_0)}^{12}_{H^2_2(\Omega)}+\norm{v(t_0)}^4_{H^2_2(\Omega)}+1\right)^{-1}$. These latter two facts in conjunction with the a priori estimates \eqref{eq:estimate-nabla-partial-t-v} and the Faedo-Galerking method lead to the existence of a weak solution $(\hat v,\hat \omega,\hat \gamma)$ corresponding to the initial data $(v(t_0),\omega(t_0),\gamma(t_0))$, which enjoys the regularity properties \eqref{eq:regularity} and also 
\[
\partial_t \hat v\in L^\infty(t_0,t_0+\tau;L^2_\sigma(\Omega))\cap L^2(t_0,t_0+\tau;{_{0}H}^1_{2,\sigma}(\Omega))\quad \text{for all }\tau\in (0,t_*),
\]
we refer the interested reader to \cite{Ma,Ma2} for the technical details. By the {\em weak-strong uniqueness} proved in \cite[Proposition Proposition 5.4.]{GaldiMazzoneMohebbi18}, we further obtain that the weak solution $(\hat v,\hat \omega,\hat \gamma)$ is indeed strong, and 
$(\hat v=v,\hat \omega=\omega,\hat \gamma=\gamma)$ in $[t_0,t_0+t_*)$. We next claim that $t_*=\infty$. Assuming the contrary, that is $t_*<\infty$, necessarily
\[
\lim_{t\to t_*^-}\norm{v(t)}_{H^2_2(\Omega)}=\infty. 
\]
We show that the latter condition does not hold. To this aim, we go back to \eqref{eq:energy-partial-t-v} and estimate $\mathsf{F}$ in \eqref{eq:F-energy-partial-t-v} as follows:
\[\begin{split}
|\mathsf{F}|&\le \norm{w}_{L^4(\Omega)}\norm{v}_{L^4(\Omega)}\norm{\nabla w}_{L^2(\Omega)}
+k_1(\norm{v}+1)\norm{w}^2_{L^2(\Omega)}+k_2(\norm{v}+1)\norm{w}_{L^2(\Omega)}
\\
&\le \frac{\mu}{2}\norm{\nabla w}^2_{L^2(\Omega)}+k_3(\norm{\nabla v}^8_{L^2(\Omega)}+\norm{v}+1)\norm{w}^2_{L^2(\Omega)}+k_2(\norm{v}+1)\norm{w}_{L^2(\Omega)}.
\end{split}\]
Using the last estimate, \eqref{eq:strong-energy}, \eqref{eq:regularity}, \eqref{eq:uniform-H12-v}, Poincar\'e inequality and \eqref{eq:equivalent-norm} in \eqref{eq:energy-partial-t-v}, we find that 
\begin{equation}\label{eq:energy-partial-t-estimated}
\frac{\d}{\d t}\norm{w}^2+k_4\norm{w}^2_{H^1_2(\Omega)}\le k_5\norm{w}^2_{L^2(\Omega)}+k_6\norm{w}_{L^2(\Omega)}.
\end{equation}
We  integrate
the latter equation and use \eqref{eq:initial-regularity-partial-t-v} to infer that
\[
w=\partial_t v\in L^\infty(t_0,t_0+t_*;L^2_\sigma(\Omega))\cap L^2(t_0,t_0+t_*;{_{0}H}^1_{2,\sigma}(\Omega)).
\]
Furthermore, by dot-multiplying \eqref{eq:motion}$_2$ by $\P\Delta v$ and using \eqref{eq:motion}$_3$ in conjunction with \eqref{eq:strong-energy} (which still holds for $t\ge t_0$), we have that 
\[\begin{split}
\nu\norm{\P\Delta v}^2_{L^2(\Omega)}&\le\int_{\Omega}\left|\left(\partial_t v+\dot \a\times x+(\dot\omega-\dot \a)\times x+v_i\partial_{x_i}v+2\omega\times v\right)\cdot\P\Delta v\right|\; \d x
\\
&\le \frac{\nu}{2}\norm{\P\Delta v}_{L^2(\Omega)}^2+k_4\left(\norm{\partial_t v}^2_{L^2(\Omega)}+\norm{\nabla v}^6_{L^2(\Omega)}+1\right).
\end{split}\]
Next, we use the estimate $\norm{v}_{H^2_2(\Omega)}\le k\norm{\P\Delta v}_{L^2(\Omega)}$ (see
\cite[Theorem IV.6.1]{Galdi}) to conclude that 
\begin{equation}\label{eq:estimate-H22-v}
\norm{v}_{H^2_2(\Omega)}\le k_5\left(\norm{\partial_t v}_{L^2(\Omega)}+\norm{\nabla v}^3_{L^2(\Omega)}+1\right)\quad\text{for a.e. }t\in [t_0,t_0+t_*),
\end{equation}
and necessarily $t_*=\infty$. 

We go back to \eqref{eq:energy-partial-t-estimated} and use the Gr\"onwall-type lemma proved in \cite[Lemma 2.3.4.]{Ma2} to conclude that $\norm{w}\to 0$ as $t\to\infty$, and then \eqref{eq:vto0-partial-t} holds by \eqref{eq:equivalent-norm}. 
\end{proof}

As a corollary of the latter proposition, we have that trajectories corresponding to weak solutions to \eqref{eq:motion} become uniformly bounded after a sufficiently large time, and remain close to the set of steady states $\mathcal E$ (characterized in Theorem \ref{th:steady-states}) in an appropriate topology. To state this fact in mathematical terms, we introduce the space  
\begin{multline*}
    C_b([t_1,+\infty);X_1):=\{f\in C([t_1,+\infty);X_1):\; 
    \\
    \sup_{t\in [t_1,+\infty)}\norm{f(t)}_{X_1}\le M\text{ for some }M>0\},
\end{multline*}
where we recall the definition of $X_1$ given in \eqref{eq:def-X1}. The following result holds. 

\begin{corollary}\label{cor:uniform-bound-trajectories}
Let $p\in [1,\infty)$. Under the hypotheses of Proposition \ref{prop:weak-strong}, it follows that 
\begin{equation}\label{eq:uniform-bound-trajectories}
u=(v, \omega, \gamma)\in C_b([t_1,+\infty);X_1)
\end{equation}
for some $t_1\ge t_0$, where $t_0$ is the time found in Proposition \ref{prop:weak-strong}. 

In addition, 
\begin{equation}\label{eq:omega-limit}
\lim_{t\to+\infty}\dist_{X_\alpha}(u(t),\mathcal E)=0\qquad \text{for any fixed $\alpha\in [0,1)$.}
\end{equation}
Moreover, \eqref{eq:omega-limit} continues to hold with $p\leq2$ and $\alpha =1$.
\end{corollary}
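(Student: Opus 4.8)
The plan is to treat the three assertions in turn, extracting everything from the regularization and decay already established in Proposition \ref{prop:weak-strong}, namely $\|v(t)\|_{H^1_2(\Omega)}\to 0$ and $\|\partial_t v(t)\|_{L^2(\Omega)}\to 0$ (see \eqref{eq:vto0-nabla}--\eqref{eq:vto0-partial-t}), together with the strong energy inequality \eqref{eq:strong-energy}. For the uniform bound \eqref{eq:uniform-bound-trajectories} the $\gamma$-component is immediate since $|\gamma|=1$, and $\omega$ is bounded because $(\omega-\a)\cdot\I\cdot(\omega-\a)$ is controlled by the non-increasing energy $\mathsf{E}+\mathsf{U}$ while $\a$ is controlled by $\|v\|_{L^2(\Omega)}$; consequently $\dot\omega$ is bounded via \eqref{eq:motion}$_3$ and the bounds on $\omega,\a,\dot\a$. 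The core is the uniform $H^2_p$ bound on $v$. First I would obtain it for $p=2$ directly from \eqref{eq:estimate-H22-v}, whose right-hand side is bounded once $\|\partial_t v\|_{L^2(\Omega)}$ and $\|\nabla v\|_{L^2(\Omega)}$ are bounded (indeed both vanish). For general $p$ I would apply $\P$ to \eqref{eq:motion}$_2$; crucially both the pressure term and the gravity term drop out, the latter because $g\gamma=\nabla(g\gamma\cdot x)$ is a gradient, leaving the Stokes identity
\[
\nu\,\P\Delta v=\partial_t v+\P(\dot\omega\times x)+\P(v_i\partial_{x_i}v)+2\P(\omega\times v).
\]
Stokes $L^p$-regularity then bounds $\|v\|_{H^2_p(\Omega)}$ by the $L^p(\Omega)$-norm of the right-hand side. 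Using $H^2_2(\Omega)\hookrightarrow L^\infty(\Omega)\cap W^{1,6}(\Omega)$ in dimension three, the terms $\P(\dot\omega\times x)$, $\P(\omega\times v)$ and $\P(v_i\partial_{x_i}v)$ are bounded in $L^6(\Omega)$, so the only genuine difficulty is the integrability of $\partial_t v$, which a priori is only $L^2$ in space. I would upgrade it by a bootstrap based on the differentiated system \eqref{eq:motion-differentiated} and the regularity $\partial_t v\in L^\infty_t L^2_x\cap L^2_t H^1_x$ obtained inside the proof of Proposition \ref{prop:weak-strong}, improving the spatial integrability step by step; the case $p=1$ is then free from any $p>1$ because $\Omega$ is bounded.

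For the convergence \eqref{eq:omega-limit} with $\alpha\in[0,1)$ I would run a LaSalle-type invariance argument in $X_\alpha$, recalling that on $[t_1,\infty)$ the solution is strong and unique. The uniform $X_1$-bound just proved together with the compact embedding $X_1\hookrightarrow\hookrightarrow X_\alpha$ makes the trajectory $\{u(t):t\ge t_1\}$ relatively compact in $X_\alpha$, so its $\omega$-limit set $\Lambda$ is nonempty, compact and invariant, and $\dist_{X_\alpha}(u(t),\Lambda)\to 0$. The functional $\mathsf{E}+\mathsf{U}$ is a Lyapunov function: by \eqref{eq:strong-energy} it is non-increasing with dissipation $2\nu\|\nabla v\|^2_{L^2(\Omega)}$, and being bounded below it converges, forcing $\|\nabla v\|_{L^2(\Omega)}\equiv 0$ on $\Lambda$, i.e. $v\equiv 0$ there. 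On this invariant set, where $v\equiv 0$, I would reduce \eqref{eq:motion}: from \eqref{eq:motion}$_2$ the field $\dot\omega\times x$ is a gradient (as $g\gamma=\nabla(g\gamma\cdot x)$), and taking the curl gives $\dot\omega=0$; then \eqref{eq:motion}$_3$ with $\a\equiv 0$ forces $\e_3\times\gamma$ to be constant, whence $\dot\gamma=-\omega\times\gamma$ must vanish, since a precessing $\gamma$ is incompatible with $\e_3\times\gamma$ constant and $|\gamma|=1$. Thus $\Lambda$ consists of steady states, $\Lambda\subseteq\mathcal E$, and $\dist_{X_\alpha}(u(t),\mathcal E)\le\dist_{X_\alpha}(u(t),\Lambda)\to 0$.

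Finally, for \eqref{eq:omega-limit} with $\alpha=1$ and $p\le 2$ I would show $\|v(t)\|_{H^2_p(\Omega)}\to 0$, the $\omega,\gamma$-components being handled exactly as above since every element of $\mathcal E$ has $v=0$ (Theorem \ref{th:steady-states}) and the finite-dimensional norms are equivalent. Returning to the Stokes identity displayed above, each of $\partial_t v$, $\P(v_i\partial_{x_i}v)$ and $\P(\omega\times v)$ tends to $0$ in $L^2(\Omega)$ because $v$ and $\partial_t v$ vanish in the limit; for the remaining term I would use the second assertion: since $(\omega(t),\gamma(t))$ approaches the projection of $\mathcal E$, on which $\omega\times\I\cdot\omega-\beta^2\e_3\times\gamma=0$ (the steady-state form of \eqref{eq:motion}$_3$), continuity gives $\omega\times\I\cdot\omega-\beta^2\e_3\times\gamma\to 0$, and together with $\a,\dot\a\to 0$ this yields $\dot\omega\to 0$ from \eqref{eq:motion}$_3$, hence $\P(\dot\omega\times x)\to 0$. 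Therefore $\|\P\Delta v\|_{L^2(\Omega)}\to 0$, and Stokes regularity ($\|v\|_{H^2_2(\Omega)}\le c\|\P\Delta v\|_{L^2(\Omega)}$) gives $\|v\|_{H^2_2(\Omega)}\to 0$, which for $p\le 2$ implies $\|v\|_{H^2_p(\Omega)}\to 0$ on the bounded domain $\Omega$.

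I expect the main obstacle to be the uniform $H^2_p$ bound for large $p$ in the first assertion, specifically propagating enough integrability of $\partial_t v$ through the bootstrap based on \eqref{eq:motion-differentiated}; the LaSalle reduction on the invariant set, in particular ruling out precession, is the second delicate point, but it is finite-dimensional and self-contained.
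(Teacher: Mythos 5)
Your plan diverges from the paper's proof at every stage, and parts (2)--(3) of it are viable; the genuine gap is in part (1), exactly at the place you flag as ``the main obstacle.'' To run Stokes $L^p$-regularity on $\nu\P\Delta v=\partial_t v+\P(\dot\omega\times x)+\P(v_i\partial_{x_i}v)+2\P(\omega\times v)$ for large $p$ you must put $\partial_t v$ in $L^p(\Omega)$ \emph{uniformly in time}, and the proposed ``step by step'' bootstrap on \eqref{eq:motion-differentiated} does not deliver this. One rung is plausible: an $H^1$-energy estimate for $w=\partial_t v$ (plus a uniform-Gr\"onwall argument) could give $w\in L^\infty_t H^1_x\hookrightarrow L^\infty_t L^6_x$, hence $v$ bounded in $H^2_6(\Omega)$. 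But the next rung stalls: to push $w$ past $L^6_x$ you need a uniform bound on $w$ in, say, $W^{1,q}_x$ with $q>3$, and your elliptic scheme applied to the $w$-equation would require $\partial_t w=\partial_{tt}v\in L^2_x$ uniformly---i.e., differentiating the system in time a second time, with new compatibility and regularity issues---or else $L^q$ resolvent/semigroup theory for the Stokes operator applied to the $w$-system, which is precisely the machinery you set out to avoid. The paper bypasses $\partial_t v$ altogether for $p>2$: it feeds the $H^2_2$ bound into the nonlinearity to get $F(u)$ bounded in $L^6$, applies the variation-of-parameters formula for \eqref{eq:motion-abstract} and the smoothing of the analytic semigroup to bound $u$ in $X_\alpha=[L^6(\Omega),H^2_6(\Omega)]_\alpha\times\R^3\times\mathbb{S}^2$, uses $H^{2\tilde\alpha}_6(\Omega)\hookrightarrow H^1_\infty(\Omega)$ for $\tilde\alpha\in(3/4,1)$ to conclude that $F(u)$ is bounded in every $L^p$, and applies the formula once more. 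That semigroup smoothing step is the missing idea in your part (1); without it the first claim---and the compactness in $X_\alpha$ that your LaSalle argument needs---is not established beyond $p=6$.

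Parts (2) and (3), by contrast, are correct in substance but genuinely different from the paper. For \eqref{eq:omega-limit} with $\alpha<1$ the paper does not run an invariance principle: it interpolates, $\|v\|_{H^{2\alpha}_p(\Omega)}\le C\|v\|^{1-\theta}_{H^1_2(\Omega)}\to0$ (using part (1)), and for the finite-dimensional components it quotes the $\Omega$-limit set characterization of \cite[Proposition 6.3]{GaldiMazzoneMohebbi18}. Your LaSalle argument in effect re-proves that cited proposition (energy as Lyapunov functional, $v\equiv0$ on the limit set, the curl argument giving $\dot\omega=0$, and the rigidity of $|\gamma|=1$ with $\e_3\times\gamma$ constant giving $\dot\gamma=0$---all sound); note only that the semiflow on $X_\alpha$ is available for $\alpha$ in the range \eqref{eq:range-alpha}, so you should prove convergence there and deduce it for smaller $\alpha$ from the weaker norm, and that the whole argument rests on the not-yet-proven part (1). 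For $\alpha=1$, $p\le2$, your argument---each term in the projected momentum equation tends to $0$ in $L^2$, with $\dot\omega\to0$ extracted from part (2) and \eqref{eq:motion}$_3$, then $\|v\|_{H^2_2(\Omega)}\le c\|\P\Delta v\|_{L^2(\Omega)}$---is a more elementary alternative to the paper's route (a second semigroup bootstrap giving $\sup_t\|A^{\tilde\alpha}u(t)\|_X<\infty$ for some $\tilde\alpha>1$, followed by interpolation against $\|v\|_{H^1_2(\Omega)}\to0$), and it only uses the solid $p=2$ case of part (1); this piece of your proposal would survive intact if part (1) were repaired by the paper's method.
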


\begin{proof}
Under the hypotheses of Proposition \ref{prop:weak-strong} and following its proof, we have that $v$ enjoys the estimate \eqref{eq:estimate-H22-v} a.e. on $[t_0,\infty)$. Since \eqref{eq:strong-energy},\eqref{eq:vto0-nabla}, and \eqref{eq:vto0-partial-t} hold, we immediately find \eqref{eq:uniform-bound-trajectories} when $q=2$. 

Since $u=(v, \omega, \gamma)\in C_b([t_0,+\infty);H^2_2(\Omega)\times\R^3\times\mathbb{S}^2)$, the Sobolev embedding theorem implies that 
\[
v\in C_b([t_0,+\infty);H^1_6(\Omega))\cap C_b([t_0,+\infty);L^\infty(\Omega)),
\]
and the map $t\in [t_0,+\infty)\mapsto F(u(t))\in L^6(\Omega)\times\R^3\times\R^3$ is uniformly bounded. 

Let $p=6$. By the variation of parameter formula for \eqref{eq:motion-abstract} applied to $u$, there exists $\tau\ge t_0$ such that 
\[
\sup_{t\ge \tau}\norm{u(t)}_{X_\alpha}<+\infty\quad\text{for each }\alpha\in (0,1), 
\]\
where $X_\alpha=[L^6(\Omega),H^2_6(\Omega)]_\alpha\times\R^3\times\mathbb{S}^2$. Now choose $\tilde\alpha\in (3/4,1)$. By the Sobolev embedding theorem, $v\in H^{2\tilde\alpha}_6(\Omega)\hookrightarrow H^1_\infty(\Omega)$. This, in turn, implies that the map $t\in [\tau,+\infty)\mapsto F(u(t))\in L^p(\Omega)\times\R^3\times\R^3$ is uniformly bounded for each $p>6$. Using again the variation of parameter formula for \eqref{eq:motion-abstract} with the same $u=(v, \omega, \gamma)$ above, we find $\bar \tau\ge \tau\ge t_0$ such that  
\[
\sup_{t\ge \bar \tau}\norm{u(t)}_{X_\alpha}<+\infty\quad\text{for each }p>6\text{ and }\alpha\in (0,1). 
\]
The latter fact continues to hold for $p\in [1,6]$ since $\Omega$ is bounded. 

Fix $p\in [1,\infty)$ and $\alpha\in [0,1)$, and choose $\beta\in (0,1)$ and $r\in (1,\infty)$ such that 
\[
2\alpha=1+(2\beta-1)\theta,\qquad \frac{1}{p}=\frac{1-\theta}{2}+\frac{\theta}{r}\quad\text{for some }\theta\in [0,1).
\] 
Therefore, $H^{2\alpha}_p(\Omega)=[H^1_2(\Omega),H^{2\beta}_r(\Omega)]_\theta$ and 
\begin{equation}\label{eq:limit-H^2alpha_q}
\norm{v}_{H^{2\alpha}_p(\Omega)}\le c\norm{v}_{H^{1}_2(\Omega)}^{1-\theta}\norm{v}_{H^{2\beta}_r(\Omega)}^\theta
\le C\norm{v}_{H^{1}_2(\Omega)}^{1-\theta}\to 0\quad\text{as }t\to\infty. 
\end{equation}

Now consider $p=2$. Since the map  
\[
t \in [t_1, +\infty) \mapsto F(u(t)) \in H^1_2(\Omega) \times \mathbb{R}^3 \times \mathbb{R}^3
\]
is uniformly bounded, applying once more the variation of parameters formula to equation~\eqref{eq:motion-abstract} (with the same decomposition \( u = (v, \omega, \gamma) \) as above), there exists a time \(  t_1 \geq \bar \tau\ge \tau \geq t_0 \) such that  
\[
\sup_{t \geq t_1} \| A^{\tilde{\alpha}} u(t) \|_X < +\infty, \quad
\text{ for some } \tilde{\alpha} > 1.
\]
Fix \( \tilde{\theta} \in [0,1] \) such that  
\[
1 = \tilde{\theta} \tilde{\alpha} + (1 - \tilde{\theta}) \tfrac{1}{2}.
\]  
By interpolation\footnote{Since $A$ has bounded imaginary powers, then $[D(A^{\alpha_1}), D(A^{\alpha_2})]_{\tilde \theta}= D(A^{(1-\tilde \theta)\alpha_1+\tilde {\theta} \alpha_2})$, for $\alpha_1= \frac12$ and $\alpha_2=\tilde \alpha$.}, we then deduce
\begin{equation}\label{eq:limit-H^2}
\| v(t) \|_{H^2_2(\Omega)}\leq c_1 \| v(t) \|_{H^{2\tilde\alpha}_2(\Omega)}^{\tilde{\theta}} \| v (t) \|_{H^1_2(\Omega)}^{1 - \tilde{\theta}} 
\leq c_2 \| v (t) \|_{H^1_2(\Omega)}^{1 - \tilde{\theta}}  \to 0 \quad \text{as } t \to \infty.
\end{equation}

Let us now consider the $\Omega$-limit set corresponding to the weak solution $(v,\omega,\gamma)$. 

\cite[Proposition 6.3.]{GaldiMazzoneMohebbi18} provides a complete characterization of such $\Omega$-limit set in terms of steady-states solutions to \eqref{eq:motion}. From this characterization and the limits in \eqref{eq:limit-H^2alpha_q} and \eqref{eq:limit-H^2}\footnote{Recall that the remaining components of $u$ belong to a finite-dimensional space. }, \eqref{eq:omega-limit} immediately follows. 
\end{proof}

We are ready to state and prove our main result on the long-time behaviour of weak solutions.

\begin{theorem}\label{th:main-application}
Consider any initial data $(v_0,\omega_0,\gamma_0)\in L^2_\sigma(\Omega)\times\R^3\times\mathbb{S}^2$ satisfying the conditions
\begin{equation}\label{eq:condition-i.d.}\begin{split}
&\frac{\lambda_3-\lambda_i}{\lambda_3^2}|\gamma_0\cdot\I\cdot(\omega_0-\a_0)|^2\ne\beta^2\quad\text{ for }i=1,2,
\\
&(\lambda_3-\lambda_i)\gamma_0\cdot\I\cdot(\omega_0-\a_0)\ne 4\beta^4 \quad\text{ for }i=1,2.
\end{split}\end{equation}
Then, for each corresponding weak solution $(v, \omega, \gamma)$, there exist $t_\infty,c,k >0$ and $(0, \omega_\infty, \gamma_\infty) \in \mathcal{E}$ such 
   \begin{equation*}
        \begin{aligned}
        \norm{v(t)}_{H^{2\alpha}_p(\Omega)}+ |\omega(t) - \omega_\infty| + |\gamma(t)-\gamma_\infty| \leq c \exp^{-kt}
        \end{aligned}
    \end{equation*}
for all $t>t_\infty$, for any fixed $\alpha\in [0,1)$ and $p\in [1,\infty)$ as well as $\alpha=1$ when $p=2$.
\end{theorem}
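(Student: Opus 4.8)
The plan is to combine the ``regularization'' and asymptotic results of Proposition~\ref{prop:weak-strong} and Corollary~\ref{cor:uniform-bound-trajectories} with the abstract principles of linearized stability (Theorem~\ref{th:stable}, Theorem~\ref{th:unstable} and Corollary~\ref{long time behaviour}), once the steady state approached by the trajectory has been shown to be normally stable or normally hyperbolic. First, by Proposition~\ref{prop:weak-strong} there is $t_0>0$ such that the weak solution coincides on $[t_0,\infty)$ with a strong solution of \eqref{eq:motion}, and hence with a solution of the abstract equation \eqref{eq:motion-abstract} in the regularity class required by the theorems of Section~\ref{sec:main-thm} (recall that $(A,F)$ satisfies \ref{H1}--\ref{H2} by Proposition~\ref{prop:A-satisfies-H1} and the subsequent remark). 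By Corollary~\ref{cor:uniform-bound-trajectories}, this solution is uniformly bounded in $X_1$ on $[t_1,\infty)$ for some $t_1\ge t_0$ and satisfies $\dist_{X_\alpha}(u(t),\mathcal E)\to 0$ as $t\to\infty$, see \eqref{eq:omega-limit}.

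The second ingredient is the presence of two conserved quantities along \eqref{eq:motion}: the constraint $|\gamma(t)|^2=1$ (see \eqref{eq:gamma}) and the scalar $\mathcal I(t):=\gamma(t)\cdot\I\cdot(\omega(t)-\a(t))$. Indeed, a direct computation using \eqref{eq:motion}$_{3,4}$ together with $\gamma\cdot(\e_3\times\gamma)=0$ gives $\frac{\d}{\d t}\mathcal I=0$, so that $\mathcal I(t)\equiv \gamma_0\cdot\I\cdot(\omega_0-\a_0)$ for all $t$. Since the trajectory is bounded in $X_1$ and $X_1\hookrightarrow X$ compactly, its $\Omega$-limit set is compact, connected and, by \cite[Proposition~6.3]{GaldiMazzoneMohebbi18}, contained in $\mathcal E$. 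On a steady state $(0,\alpha q,q)$ with $q\in\mathbb{S}^2$ (for which $\a=0$) the two invariants read $|q|^2=1$ and $\mathcal I=\alpha\,q\cdot\I\cdot q$. Substituting the defining relations of $\mathsf{PR}$, $\mathsf{SP}_1$, $\mathsf{SP}_2$ (equations \eqref{eq:def-PR}, \eqref{eq:def-SP1}, \eqref{eq:def-SP2}) turns $\mathcal I=\gamma_0\cdot\I\cdot(\omega_0-\a_0)$ into a polynomial equation for $\alpha$ on each branch, hence the invariant constrains the $\Omega$-limit set to finitely many steady states on a single branch; a connected subset of a finite set being a singleton, the $\Omega$-limit set reduces to one steady state $\bar s=(0,\omega_\infty,\gamma_\infty)\in\mathcal E$, and $u(t)\to \bar s$ in $X_\alpha$.

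The main work is to show that the conditions \eqref{eq:condition-i.d.} on the initial data force $\bar s$ to satisfy the hypotheses of Theorem~\ref{th:steady-states-normally-stable-hyperbolic}, i.e. to be normally stable or normally hyperbolic. I would rewrite \eqref{eq:condition-beta-alpha-lambda-sp1}, \eqref{eq:condition-beta-alpha-lambda-sp2} and \eqref{eq:condition-beta-alpha-lambda-pr} at $\bar s$ in terms of the conserved value $\mathcal I=\alpha\,q\cdot\I\cdot q$ and of $q_3$, using $q_1^2+q_3^2=1$ and the branch relations for $q_3$, and then check that \eqref{eq:condition-i.d.} precisely excludes the exceptional parameter values at which these inequalities fail; assumption~\ref{hp:physics-lambda} rules out the degenerate inertial configuration $\lambda_1=\lambda_2\ne\lambda_3$ that would yield a genuine continuum of equilibria. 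This is the step I expect to be the main obstacle: it is a purely algebraic but delicate translation of the initial-data conditions into the stability conditions through the invariant $\mathcal I$, and it requires carefully tracking the finitely many admissible $\alpha$ and excluding the values for which neither normal stability nor normal hyperbolicity can be decided. Granting it, $\bar s$ is normally stable or normally hyperbolic.

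Finally I would restart the evolution at a large time. Since $\dist_{X_\alpha}(u(t),\mathcal E)\to 0$ and $u(t)\to\bar s$, for every small $\rho>0$ there is $t_\rho\ge t_1$ with $u(t_\rho)\in B_{X_\alpha}(\bar s,\delta)$ and $\dist_{X_\alpha}(u(t),\mathcal E)\le\rho$ for all $t\ge t_\rho$, so the hypotheses of Corollary~\ref{long time behaviour} are met by the tail of our solution (equivalently, one invokes Theorem~\ref{th:stable} if $\bar s$ is normally stable, and Theorem~\ref{th:unstable}\ref{ua2} if $\bar s$ is normally hyperbolic, since alternative \ref{ua1} is ruled out by \eqref{eq:omega-limit}). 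This produces $u_\infty\in\mathcal E$ and constants $c,k>0$ with $\norm{u(t)-u_\infty}_{X_\alpha}\le c\,\exp^{-kt}$, i.e. exponential convergence in the topology of $X_\alpha$. To reach all the norms $H^{2\alpha}_p(\Omega)$ with $p\in[1,\infty)$, $\alpha\in[0,1)$, and $\alpha=1$ for $p=2$, I would upgrade this single-norm decay by the uniform higher-order bounds of Corollary~\ref{cor:uniform-bound-trajectories} together with the interpolation inequalities \eqref{eq:limit-H^2alpha_q}--\eqref{eq:limit-H^2}: interpolating the exponentially decaying lower-order norm of $v$ against the uniformly bounded higher-order norm preserves an exponential rate (with a possibly smaller $k$), while $\omega$ and $\gamma$ converge exponentially because they live in a finite-dimensional space already controlled by the $X_\alpha$-decay. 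Recalling $v_\infty=0$, this gives the claimed estimate for $\norm{v(t)}_{H^{2\alpha}_p(\Omega)}+|\omega(t)-\omega_\infty|+|\gamma(t)-\gamma_\infty|$.
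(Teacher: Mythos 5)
Your proposal follows essentially the same route as the paper's proof: regularization via Proposition \ref{prop:weak-strong}, the uniform $X_1$-bound and limit-set statement of Corollary \ref{cor:uniform-bound-trajectories}, the conserved quantity $\mathcal I:=\gamma\cdot\I\cdot(\omega-\a)$ (which is exactly the constraint in \cite[Proposition 6.3]{GaldiMazzoneMohebbi18} that the paper invokes rather than re-derives), translation of \eqref{eq:condition-i.d.} into the hypotheses of Theorem \ref{th:steady-states-normally-stable-hyperbolic}, and finally Theorem \ref{th:stable} or Theorem \ref{th:unstable}, part \ref{ua2}, applied to the time-shifted solution, with the interpolation bounds \eqref{eq:limit-H^2alpha_q}--\eqref{eq:limit-H^2} covering all pairs $(p,\alpha)$ and the case $p=2$, $\alpha=1$. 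One minor difference: your connectedness argument making the $\Omega$-limit set a singleton is sound but not needed, since the abstract theorems already produce convergence to a single equilibrium once the tail of the trajectory enters a $\delta$-ball around a normally stable or normally hyperbolic point while staying $\rho$-close to $\mathcal E$; this is how the paper proceeds.

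The one substantive shortfall is that the step you defer with ``granting it'' is precisely the content of the paper's proof, so as written your argument is incomplete exactly where the hypotheses \eqref{eq:condition-i.d.} do their work. The missing computation is short and does close. Case by case in $\lambda_1,\lambda_2,\lambda_3$, the invariant imposes an algebraic equation on $\alpha$: for $\mathsf{PR}$ one gets $\alpha=\pm\mathcal I_0/\lambda_3$ with $\mathcal I_0:=\gamma_0\cdot\I\cdot(\omega_0-\a_0)$, and substituting $\alpha^2=\mathcal I_0^2/\lambda_3^2$ into \eqref{eq:condition-beta-alpha-lambda-pr} reproduces exactly the first line of \eqref{eq:condition-i.d.}. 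For $\mathsf{SP}_1$, using $q_2=0$, $q_1^2+q_3^2=1$ and the branch relation \eqref{eq:def-SP1} for $q_3$, the invariant becomes the quartic $(\lambda_3-\lambda_1)\lambda_1\alpha^4-(\lambda_3-\lambda_1)\mathcal I_0\alpha^3+\beta^4=0$; failure of \eqref{eq:condition-beta-alpha-lambda-sp1} means $\lambda_1(\lambda_3-\lambda_1)\alpha^4=3\beta^4$, which combined with the quartic forces $(\lambda_3-\lambda_1)\mathcal I_0\,\alpha^3=4\beta^4$ --- the quantity that the second line of \eqref{eq:condition-i.d.} is designed to exclude (note the factor $\alpha^3$ that appears when you actually carry out this substitution; you will need to track it, and the admissible $\alpha$ are the finitely many real roots of the quartic). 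The case $\mathsf{SP}_2$ is identical with $\lambda_1$ replaced by $\lambda_2$, and assumption \ref{hp:physics-lambda} removes the degenerate configuration $\lambda_1=\lambda_2\ne\lambda_3$, exactly as you anticipated. So your plan is viable and coincides with the paper's; it is simply not executed in your write-up.
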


\begin{proof}
By \cite[Proposition 6.3]{GaldiMazzoneMohebbi18}, the $\Omega$-limit set corresponding to a weak solution $u=(v,\omega,\gamma)$ to \eqref{eq:motion}, with initial data initial data $(v_0,\omega_0,\gamma_0)\in L^2_\sigma(\Omega)\times\R^3\times\mathbb{S}^2$, is characterized by 
\[
\Omega(u):=\{(0,\alpha q,q)\in \mathcal E:\; \alpha q\cdot\I\cdot q=\gamma_0\cdot\I\cdot(\omega_0-\a_0)\}
\]
which, in turn, implies that $\alpha$ satisfies an algebraic equation with finite degree. Specifically, we have the following cases:
\begin{enumerate}
    \item If $\lambda_1=\lambda_2=\lambda_3$, then
        \[
        \Omega(u)\subset\{(0,\alpha q,q)\in \mathsf{PR}:\; \alpha=\pm\gamma_0\cdot(\omega_0-\a_0)\}.
        \]
    \item If $\lambda_1\ne\lambda_2=\lambda_3$, then
        \begin{multline*}
        \Omega(u)\subset \left\{(0,\alpha q,q)\in \mathsf{PR}:\; \alpha=\pm\frac{\gamma_0\cdot\I\cdot(\omega_0-\a_0)}{\lambda_3}\right\}
        \\
        \cup\left\{(0,\alpha q,q)\in \mathsf{SP}_1:\; (\lambda_3-\lambda_1)\lambda_1\alpha^4-(\lambda_3-\lambda_1)\gamma_0\cdot\I\cdot(\omega_0-\a_0)\alpha^3+\beta^4=0\right\}.
        \end{multline*}
    \item If $\lambda_1=\lambda_3\neq\lambda_2$, then
        \begin{multline*}
        \Omega(u)\subset \left\{(0,\alpha q,q)\in \mathsf{PR}:\; \alpha=\pm\frac{\gamma_0\cdot\I\cdot(\omega_0-\a_0)}{\lambda_3}\right\}
        \\
        \cup\{(0,\alpha q,q)\in \mathsf{SP}_2:\; (\lambda_3-\lambda_2)\lambda_2\alpha^4-(\lambda_3-\lambda_2)\gamma_0\cdot\I\cdot(\omega_0-\a_0)\alpha^3+\beta^4=0\}.
        \end{multline*}
    \item If $\lambda_1\ne \lambda_2\ne \lambda_3$, then 
        \begin{equation*}
        \begin{aligned}
        &\Omega(u)\subset \left\{(0,\alpha q,q)\in \mathsf{PR}:\; \alpha=\pm\frac{\gamma_0\cdot\I\cdot(\omega_0-\a_0)}{\lambda_3}\right\}
        \\
        &\cup\left\{(0,\alpha q,q)\in \mathsf{SP}_1:\; (\lambda_3-\lambda_1)\lambda_1\alpha^4-(\lambda_3-\lambda_1)\gamma_0\cdot\I\cdot(\omega_0-\a_0)\alpha^3+\beta^4=0\right\}
        \\
        &\cup\{(0,\alpha q,q)\in \mathsf{SP}_2:\; (\lambda_3-\lambda_2)\lambda_2\alpha^4-(\lambda_3-\lambda_2)\gamma_0\cdot\I\cdot(\omega_0-\a_0)\alpha^3+\beta^4=0\}.
        \end{aligned}
        \end{equation*}
\end{enumerate}

Thanks to \eqref{eq:condition-i.d.}, Theorem \ref{th:steady-states-normally-stable-hyperbolic}, and Corollary \ref{cor:uniform-bound-trajectories}, we have that $\Omega(u)$ is characterized by either normally stable or normally hyperbolic equilibria. We then apply Theorems \ref{th:stable} and \ref{th:unstable}, part \ref{ua2},\footnote{We note that, in the case $p=2$ and $\alpha=1$, the exponential decay from \eqref{eq:limit-H^2}.} to the solution 
\[
u(t):=(v(t+t_1),\omega(t+t_1),\gamma(t+t_1))
\]
of \eqref{eq:motion-abstract}, where $t_1$ is the time found in Corollary \ref{cor:uniform-bound-trajectories}. Finally, we take $t_\infty\ge t_1$. 
\end{proof}

We conclude this section by remarking that this last result improves considerably the results in \cite[Theorem 6.4]{GaldiMazzoneMohebbi18} and \cite[Theorems 7.1 \& 7.3]{GaldiMazzone21}. Our theorem shows convergence of generic trajectories (associated to weak solutions of \eqref{eq:motion}) to a steady state in more general topologies and with an exponential decay rate. Conditions \eqref{eq:condition-i.d.} on the data are not needed in \cite[Theorem 6.4]{GaldiMazzoneMohebbi18}, but it is exactly what is used to further characterize the stability properties of permanent rotations in \cite[Theorems 7.1 \& 7.3]{GaldiMazzone21}. It is an open questions whether conditions \eqref{eq:condition-i.d.} are also necessary for the exponential decay rate. Finally, our theorem holds for any possible choice of $\lambda_1$, $\lambda_2$ and $\lambda_3$ other than $\lambda_1=\lambda_2\ne \lambda_3$. It remains an open question whether exponential convergence to an equilibrium could also be proved in the case $\lambda_1=\lambda_2\ne\lambda_3$; we leave this to a future work. 

\bmhead{Acknowledgements}
The first and third authors gratefully acknowledge the support of the Natural Sciences and Engineering Research Council of Canada (NSERC) through the NSERC Discovery Grants  RGPIN-2022-04330 and RGPIN-2021-03129.

Giusy Mazzone is a member of ``Gruppo Nazionale per l'Analisi Matematica, la Probabilit\`a e le loro Applicazioni'' (GNAMPA) of Istituto Nazionale di Alta Matematica Francesco Severi (INdAM).

\begin{appendices}
\section{Re-parametrization of the equilibria set}\label{sec:flattening-equilibria}
In this section we reproduce step $(b)$ of the proof of \cite[Theorem 6.1]{PSZ09}. To this aim, let us recall the definitions of the spectral projections $P^\ell$, for $\ell\in\{c,s,u\}$, and of the spaces $X^\ell=P^\ell X$, from Remarks \ref{rm:spectral-projections-n-stable} and \ref{rm:spectral-projections-n-hyperbolic}. 

\begin{lemma}\label{lem:re-parametrization}
    There exist $\rho_0 >0$ and \( \Phi \in C^{1}(B_{X^{c}}(0, \rho_{0}); X_{1}) \), \( \Phi(0) = 0 \) such that the mapping \[x\in B_{X^{c}}(0, \rho_{0}) \mapsto \bar{u} + \Phi(x) \in \mathcal E\cap W\] parametrizes the manifold \( \mathcal{E} \) of equilibria near \( \bar u \). 

    Furthermore, for all $\ell\in\{s,u\}$, we have \( \phi_{\ell}:= P^{\ell} \Phi \in C^{1}(B_{X^{c}}(0, \rho_{0});X_{1}^{\ell}) \), and $ \phi_{\ell}(0) = \phi'_{\ell}(0) =0$. 
\end{lemma}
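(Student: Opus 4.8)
The plan is to convert the abstract parametrization $\psi$ supplied by part \ref{equi1} of Definition \ref{def:normally-stable} into one whose parameter domain is the center subspace $X^{c}=N(L)$ itself and which is ``flat'', in the sense that its $X^{c}$-component is the identity; this is exactly step $(b)$ of \cite[Theorem 6.1]{PSZ09}. Recall from \ref{equi1} that there are an open set $U\subset\R^{m}$ with $0\in U$ and a $C^{1}$-map $\psi:U\to X_{1}$ with $\psi(0)=\bar u$, $\psi(U)\subset\mathcal{E}$, $\mathrm{rank}\,\psi'(0)=m$ (condition \ref{equi1b}), and $\mathcal{E}\cap B_{D(L)}(\bar u,r)=\psi(U)$ (condition \ref{equi1d}). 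Since $\psi$ is a local chart of $\mathcal{E}$ near $\bar u$, the image of $\psi'(0)$ is the tangent space $T_{\bar u}\mathcal{E}$, which by \ref{tangent-space} equals $N(L)=X^{c}$. Combining this with $\dim X^{c}=m$ and the rank hypothesis shows that $\psi'(0):\R^{m}\to X^{c}$ is a linear isomorphism.

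First I would introduce the finite-dimensional map $g:U\to X^{c}$, $g(\zeta):=P^{c}(\psi(\zeta)-\bar u)$, where $P^{c}$ is the spectral projection onto $X^{c}$ from Remark \ref{rm:spectral-projections-n-stable} (respectively Remark \ref{rm:spectral-projections-n-hyperbolic}). Then $g(0)=0$ and $g'(0)=P^{c}\psi'(0)$. Because the image of $\psi'(0)$ already lies in $X^{c}$ and $P^{c}$ restricts to the identity there, one has $g'(0)=\psi'(0)$ as a map into $X^{c}$, hence an isomorphism. As both $\R^{m}$ and $X^{c}$ are finite-dimensional, the classical inverse function theorem furnishes $\rho_{0}>0$ and a $C^{1}$-inverse $g^{-1}:B_{X^{c}}(0,\rho_{0})\to V\subset U$ with $g^{-1}(0)=0$ and $V$ an open neighborhood of $0$ in $U$.

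Next I would set $\Phi(x):=\psi(g^{-1}(x))-\bar u$ for $x\in B_{X^{c}}(0,\rho_{0})$. As a composition of $C^{1}$-maps, $\Phi\in C^{1}(B_{X^{c}}(0,\rho_{0});X_{1})$ with $\Phi(0)=\psi(0)-\bar u=0$. By construction $\bar u+\Phi(x)=\psi(g^{-1}(x))\in\psi(V)\subset\mathcal{E}$, and since $\psi$ parametrizes $\mathcal{E}$ near $\bar u$, the set $\psi(V)$ is relatively open in $\mathcal{E}$, so $\{\bar u+\Phi(x):x\in B_{X^{c}}(0,\rho_{0})\}=\mathcal{E}\cap W$ for a suitable neighborhood $W$ of $\bar u$ in $X_{1}$. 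The decisive flatness identity is $P^{c}\Phi(x)=P^{c}(\psi(g^{-1}(x))-\bar u)=g(g^{-1}(x))=x$, so the center component of $\Phi$ is exactly $x$; this is what the proofs of Theorems \ref{th:stable} and \ref{th:unstable} use when writing $\phi_{s}(x)=\Phi(x)-x$.

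Finally, putting $\phi_{\ell}:=P^{\ell}\Phi$ for $\ell\in\{s,u\}$ gives $\phi_{\ell}\in C^{1}(B_{X^{c}}(0,\rho_{0});X^{\ell}_{1})$ with $\phi_{\ell}(0)=P^{\ell}\Phi(0)=0$. To obtain $\phi_{\ell}'(0)=0$, I would compute $\Phi'(0)=\psi'(0)\circ(g^{-1})'(0)=\psi'(0)\circ(P^{c}\psi'(0))^{-1}$; since the image of $\psi'(0)$ lies in $X^{c}$, this composition is precisely the inclusion $X^{c}\hookrightarrow X_{1}$, whence $P^{\ell}\Phi'(0)=P^{\ell}P^{c}=0$ because spectral projections associated to disjoint spectral sets annihilate each other. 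I expect the only delicate point to be the verification that $\psi'(0):\R^{m}\to X^{c}$ is an isomorphism, which hinges on carefully coupling the rank hypothesis \ref{equi1b} with the tangent-space identification \ref{tangent-space}; once this is established, the remainder is a routine application of the inverse function theorem together with the mutual orthogonality of the spectral projections.
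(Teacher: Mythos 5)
Your proposal is correct and follows essentially the same route as the paper's own proof: the auxiliary map $g(\zeta)=P^{c}(\psi(\zeta)-\bar u)$, the inverse function theorem in finite dimensions, the definition $\Phi=\psi\circ g^{-1}-\bar u$, the flatness identity $P^{c}\Phi(x)=x$, and the observation that $R(\Phi'(0))\subset X^{c}$ forces $\phi_{\ell}'(0)=P^{\ell}\Phi'(0)=0$. Your treatment is in fact slightly more explicit than the paper's at the one delicate point, namely invoking condition \ref{tangent-space} to identify $R(\psi'(0))$ with $X^{c}$ before concluding that $g'(0)$ is an isomorphism, which the paper compresses into its opening sentence.
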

\begin{proof}
 From Definition \ref{def:normally-stable} (respectively, Definition \ref{def:normally-hyperbolic}), parts \ref{equi1}, \ref{tangent-space}, and \ref{semi-simple}, it follows that $X^c=N(L)= T_{\bar u}\mathcal E$.
 
Consider the mapping $g : U \subset \mathbb R^{m} \to X^{c}$ defined by
    \begin{align}\label{eq:def-g}
       g(\zeta) \:= P^{c}(\psi(\zeta)-\bar u).
    \end{align} 
We immediately note that $g(0)=0$. Since the rank of \( \psi'(0) \) is \( m \), it follows that \( g'(0) = P^{c} \psi'(0) \colon \mathbb{R}^{m} \to X^{c} \) is an isomorphism between the finite-dimensional spaces \( \mathbb{R}^{m} \) and \( X^{c} \). By the inverse function theorem, \( g \) is a \( C^{1} \)-diffeomorphism of a neighborhood of $0$ in $\R^m$, say \( \tilde U \subset U \), into a neighborhood of \( 0 \) in \( X^{c} \), say \( B_{X^{c}}(0, \rho_{0}) \). Let \( g^{-1} \colon B_{X^{c}}(0, \rho_{0}) \to \tilde U \) be the inverse mapping. Then \( g^{-1} \colon B_{X^{c}}(0, \rho_{0}) \to \tilde U \) is also \( C^{1} \)-mapping and \( g^{-1}(0) = 0 \). Next, we set 
\begin{equation}\label{eq:Phi}
\Phi(x) := \psi(g^{-1}(x)) -\bar u\qquad\text{for \( x \in B_{X^{c}}(0, \rho_{0}) \)}.
\end{equation}
Then, \( \Phi \in C^{1}(B_{X^{c}}(0, \rho_{0}); X_{1}) \), \( \Phi(0) = 0 \), $R(\Phi'(0)) \subset R(\psi'(0)) \subset X^c$, and
\begin{center}
    \( \{\Phi(x) + \bar u: x \in B_{X^{c}}(0, \rho_{0})\} = \mathcal{E} \cap W \),
\end{center}
 where \( W \) is an appropriate neighborhood of 0 in \( X_{1} \). 
 
By equations \eqref{eq:def-g} and \eqref{eq:Phi}, we note that, for all \( x \in B_{X^{c}}(0, \rho_{0}) \),
\[
P^{c} \Phi(x) = \left( (P^{c} \circ \psi) \circ g^{-1} \right)(x)-P^c\bar u = (g \circ g^{-1})(x) = x.
\]
Therefore, for all \( x \in B_{X^{c}}(0, \rho_{0}) \), we have
\begin{equation*}
    \begin{aligned}
        \Phi(x) &= P^{u} \Phi(x) + P^{c} \Phi(x) + P^{s} \Phi(x) \\
        &= P^{u} \Phi(x) + x + P^{s} \Phi(x).
    \end{aligned}
\end{equation*}
In addition, \( \phi_{\ell} = P^{\ell} \Phi \in C^{1}(B_{X^{c}}(0, \rho_{0}); X_{1}^{\ell}) \), \( \phi_{\ell}(0) = 0 \) and $\phi'_{\ell}(0) =P^\ell\Phi'(0)=0$, for all $\ell\in\{s,u\}$. 
\end{proof}

\section{Instability of normally hyperbolic equilibria}\label{app:unstable}
In this section, we will prove the first part of Theorem \ref{th:unstable} concerning the (nonlinear) instability of normally hyperbolic equilibria of \eqref{eq:evolution0}.
\begin{theorem}\label{th:unstable-correct-henry}
Let $(A,F)$ satisfy the hypotheses \ref{H1} and \ref{H2}. If $\bar u$ is a normally hyperbolic equilibrium of \eqref{eq:evolution}, then it is unstable.
\end{theorem}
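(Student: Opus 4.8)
The plan is to reduce the statement to the instability of the zero solution of \eqref{eq:evolution1}, exactly as in Remark \ref{rm:stability-0}: one writes $\dot u + Lu = G(u)$ with $G(0)=0$ and $G'(0)=0$, and since $G\in C^1(X_\alpha;X)$, for every $\eta>0$ there is a radius on which $G$ has Lipschitz constant at most $\eta$ from $X_\alpha$ into $X$. I would then invoke the spectral decomposition of Remark \ref{rm:spectral-projections-n-hyperbolic}, $X = X^c \oplus X^s_0 \oplus X^u$, where $X^u = P^uX$ carries the bounded invertible operator $L_u = LP^u$ with $\sigma(L_u)=\sigma_u(L)\subset\{\Re<0\}$; normal hyperbolicity guarantees $X^u\neq\{0\}$. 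Throughout I would work with the projections $P^u$ and $Q:=P^c+P^s=I-P^u$.

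The first technical ingredient I would record is the pair of linear estimates. Since $\Re\sigma(L_u)\le-\beta$ for some $\beta>0$ and $-L_u$ generates a (norm-continuous) group, the spectral mapping theorem gives $\norm{\exp^{tL_u}}_{\mathcal B(X^u)}\le M\exp^{-\beta t}$ for $t\ge0$, whence the lower bound $\norm{\exp^{-tL_u}w}_{X_\alpha}\ge M^{-1}\exp^{\beta t}\norm{w}_{X_\alpha}$ for all $t\ge0$ and $w\in X^u$. On the complementary space the semigroup $\exp^{-tL_{cs}}$, with $L_{cs}:=LQ$, exhibits no exponential growth (the centre part is bounded and the stable part decays), and obeys the smoothing bound $\norm{\exp^{-tL_{cs}}Qg}_{X_\alpha}\le c\,(1+t^{-\alpha})\norm{g}_X$ inherited from \eqref{eq:estimate-e-Ls-X}.

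I would then argue by contradiction. Suppose $\bar u$ were stable; then for a fixed small $\epsilon_0$, chosen so that the Lipschitz bound $\eta$ holds on $B_{X_\alpha}(0,\epsilon_0)$ with $\eta$ small relative to $\beta$, there is $\delta>0$ such that $\norm{u_0}_{X_\alpha}<\delta$ forces $\norm{u(t)}_{X_\alpha}<\epsilon_0$ for all $t>0$. Introduce the unstable cone $\mathcal K:=\{v:\norm{Qv}_{X_\alpha}\le\norm{P^uv}_{X_\alpha}\}$, and apply the variation-of-parameters formula separately to $z(t):=P^uu(t)$ and $y(t):=Qu(t)$. The goal is to show that (a) $\mathcal K$ is positively invariant for the flow inside $B_{X_\alpha}(0,\epsilon_0)$, and (b) inside $\mathcal K$ the unstable component satisfies $\norm{z(t)}_{X_\alpha}\ge\norm{z(0)}_{X_\alpha}\exp^{(\beta-c\eta)t}$. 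Choosing $u_0\in\mathcal K\cap X^u$ with $0<\norm{u_0}_{X_\alpha}<\delta$, its solution would leave $B_{X_\alpha}(0,\epsilon_0)$ in finite time, contradicting stability and hence proving instability.

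I expect the main obstacle to be step (b): extracting a genuine exponential lower bound for $\norm{z(t)}_{X_\alpha}$ from $z(t)=\exp^{-tL_u}z_0+\int_0^t\exp^{-(t-s)L_u}P^uG(u(s))\,\d s$. One must dominate the nonlinear remainder via $\norm{G(u(s))}_X\le\eta\norm{u(s)}_{X_\alpha}\le 2\eta\norm{z(s)}_{X_\alpha}$ (valid on $\mathcal K$) together with the backward contraction $\norm{\exp^{-(t-s)L_u}}$ for $s>t$, and then absorb it into the growth $\exp^{\beta t}$ by taking $\eta$ small; this is precisely where the fractional-power norm $X_\alpha$ makes the kernel estimates delicate, and where the reported typo in \cite{henry} is likely located. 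Closing the cone-invariance estimate (a) simultaneously — controlling $\norm{y(t)}_{X_\alpha}$ by $\norm{z(t)}_{X_\alpha}$ through the singular convolution \eqref{eq:estimate-e-Ls-X} — is the second technical point. Both are handled by a continuation argument on the maximal interval where the solution remains in $\mathcal K\cap B_{X_\alpha}(0,\epsilon_0)$, upgrading the estimates to strict inequalities so that the exit from the cone or from the ball cannot occur.
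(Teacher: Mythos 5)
Your plan takes a genuinely different route from the paper's proof: you argue \emph{forward} in time with an invariant-cone/bootstrap scheme, whereas the paper reproduces Henry's \emph{backward}-in-time construction (cf.\ \cite{henry}), building via the contraction \eqref{eq:fixed-point-T} a solution $y^*(\cdot;\tau,a)$ of \eqref{eq:evolution1} on $(-\infty,\tau]$ in a space weighted by $\exp^{2\beta(t-\tau)}$, so that $y^*(0;n,a)$ can be made arbitrarily small while $\norm{y^*(n;n,a)}_{X_\alpha}\ge\tfrac12\norm{a}_{X_\alpha}$ stays of fixed size; instability follows at once. A cone argument of your type can in principle be completed, but as written it has a genuine gap at its crux, and the tools you cite for your steps (a) and (b) would not close it.

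The gap is twofold. First, (a) and (b) are circular: the growth of $z=P^uu$ uses the cone condition (to write $\norm{G(u(s))}_X\le 2\eta\norm{z(s)}_{X_\alpha}$), while cone invariance needs the growth of $z$; and this circularity is not broken by the estimate you invoke, because of the \emph{centre} direction. Indeed $Q=P^c+P^s$, and \eqref{eq:estimate-e-Ls-X} controls only the $P^s$ component; on $X^c$ one has $L_c\equiv 0$, so the semigroup there is the identity and the kernel in your convolution is of the form $c\bigl(1+(t-s)^{-\alpha}\exp^{-\gamma(t-s)}\bigr)$ with a non-decaying constant part. Using only the ball bound $\norm{z(s)}_{X_\alpha}\le\epsilon_0$, the centre contribution to $\norm{y(t)}_{X_\alpha}$ is of order $\eta\, t\,\sup_{s\le t}\norm{z(s)}_{X_\alpha}$ --- secular in $t$ --- and cannot be dominated by $\norm{z(t)}_{X_\alpha}$ for any fixed $\eta$, no matter how small. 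Second, the bound claimed in (b), $\norm{z(t)}_{X_\alpha}\ge\norm{z(0)}_{X_\alpha}\exp^{(\beta-c\eta)t}$, is not obtainable by the absorption you describe: the group estimate only gives $\norm{\exp^{-tL_u}w}_{X_\alpha}\ge M^{-1}\exp^{\beta t}\norm{w}_{X_\alpha}$ with $M\ge 1$ (for non-normal $L_u$ the norm may initially \emph{decrease}), and a Gr\"onwall inequality cannot be run ``from below'' here, since the forward upper bound $\norm{\exp^{-tL_u}}\le M\exp^{Bt}$ carries the rate $B=\max\Re\,\sigma(-L_u)$, which may exceed $\beta$.

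Both defects are repaired by the same missing ingredient, which is the real content of this route: from the backward identity $z(s)=\exp^{(t-s)L_u}z(t)-\int_s^t\exp^{(\sigma-s)L_u}P^uG(u(\sigma))\,\d\sigma$ one gets, on any interval where the cone and ball conditions hold, $\norm{z(s)}_{X_\alpha}\le M\exp^{-\beta(t-s)}\norm{z(t)}_{X_\alpha}+\tfrac{2M\eta}{\beta}\sup_{[s,t]}\norm{z}_{X_\alpha}$, hence for $\eta$ small a doubling property $\norm{z(s)}_{X_\alpha}\le\tfrac12\norm{z(s+\tau_0)}_{X_\alpha}$ with fixed $\tau_0$, i.e.\ both exponential growth and the backward decay $\norm{z(s)}_{X_\alpha}\le C\exp^{-\nu(t-s)}\norm{z(t)}_{X_\alpha}$; only after inserting this backward decay into your convolution does the secular centre term disappear and the cone close with a strict inequality. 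By contrast, the paper's fixed-point argument never needs a lower bound or cone invariance: the weight $\exp^{2\beta(t-\tau)}$ makes the convolution $\int_{-\infty}^t\exp^{-L_2(t-s)}E_2G(y(s))\,\d s$ against the non-decaying centre-stable semigroup absolutely convergent, which is precisely why that route is shorter.
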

\begin{proof}
Consider equation \eqref{eq:evolution1}, with $(L,G)$ defined in Remark \ref{rm:stability-0}, we show that the trivial solution of \eqref{eq:evolution1} is unstable. Let \(\sigma_1 := \sigma_u(L)\) (see \eqref{eq:unstable-spectrum}) and \(\sigma_2 := \sigma(L) \setminus \sigma_1\). By the spectral decomposition theorem (see Remark \ref{rm:spectral-projections-n-hyperbolic}), the space \(X\) admits a decomposition \(X = Y_1 \oplus Y_2\), where each \(Y_j\) is invariant under \(L\), and the spectrum of the restriction \(L_j := L|_{Y_j}\) satisfies \(\sigma(L_j) = \sigma_j\), for \(j = 1, 2\). We denote by \(E_j\) the spectral projection associated with the spectral set \(\sigma_j\), for $j=1,2$.

By the semigroup properties \eqref{eq:estimate-e-Ls-X}, \eqref{eq:estimate-e-Ls-Xbeta}, and the analiticity of the semigroup $\{\exp^{tL_1}\}_{t\ge 0}$, there exist constants \(\beta > 0\) and \(M_\alpha \geq 1\) such that the following estimates hold:
\begin{equation}\label{expo_bound_for_L_2}
\begin{aligned}
\|\mathrm{e}^{-tL_2} x\|_{X_\alpha} &\leq M_\alpha \|x\|_X\, t^{-\alpha} \mathrm{e}^{\beta t} \qquad &&\text{for }t> 0, \\
\|\mathrm{e}^{-tL_2} x\|_{X_\alpha} &\leq M_\alpha \|x\|_{X_\alpha}\, \mathrm{e}^{\beta t}\qquad &&\text{for }t> 0,
\end{aligned}
\end{equation}
and 
\begin{equation}\label{expo_bound_for_L_1}
\begin{aligned}
\|\mathrm{e}^{-tL_1} x\|_{X_\alpha} &\leq M_\alpha \|x\|_{X_\alpha}\, \mathrm{e}^{3\beta t}\qquad &&\text{for }t\le 0, \\
\|\mathrm{e}^{-tL_1} x\|_{X_\alpha} &\leq M_\alpha \|x\|_X\, \mathrm{e}^{3\beta t}\qquad &&\text{for }t\le 0.
\end{aligned}
\end{equation}

Let $a \in X_{1} \cap X_{\alpha}$ and $\tau>0$. Consider the map 
     \begin{multline}\label{eq:fixed-point-T}
      \mathcal T:\;y\mapsto 
      \mathcal T(y)(t):=  \exp^{-L_{1}(t-\tau)} a + 
      \int_{\tau}^{t} \exp^{-L_{1}(t - s)}E_{1}G(y(s)) ds
      \\
      + \int_{-\infty}^{t} \exp^{-L_{2}(t - s)}E_{2}G(y(s)) ds
      \quad\text{for }t\le \tau. 
     \end{multline}
With a suitable choice of $a$ and $\rho>0$, we prove that $\mathcal T$ admits a fixed point in \[
\mathcal{A} := \left\{ f \in C((-\infty, \tau]; X_\alpha) \, : \, E_1 f(\tau) = a \text{ and } \|f(t)\|_{X_\alpha} \leq \rho \exp^{2\beta (t - \tau)} \text{ for all } t \leq \tau \right\},
\]
equipped with the norm
\[
\|f\|^S := \sup_{t \leq \tau} \left\{ \exp^{-2\beta (t - \tau)} \|f(t)\|_{X_\alpha} \right\}.
\]
It is straightforward to verify that \(\mathcal{A}\), with the norm \(\|\cdot\|^S\), is a complete metric space. In addition, we note that a fixed point of $\mathcal T$ is indeed a solution of the ODE \eqref{eq:evolution1}$_1$. Next, we show that \(\mathcal T\) maps \(\mathcal{A}\) into itself and is a strict contraction with respect to the norm \(\|\cdot\|^S\).

Let \( y \in \mathcal{A} \). We estimate the two integral terms in \eqref{eq:fixed-point-T} using \eqref{expo_bound_for_L_1}, \eqref{expo_bound_for_L_2} and standard inequalities. First, we find that 
\begin{align}
\left\| \int_\tau^t \exp^{-L_1(t - s)} E_1 G(y(s))\, ds \right\|_{X_\alpha}
&\leq M_\alpha \displaystyle\int_\tau^t \exp^{3\beta(t - s)} \|E_1\|_{\mathcal B(X;X)}\, k(\rho) \|y(s)\|_{X_\alpha} \, ds \nonumber\\
&\leq M_\alpha \|E_1\|_{\mathcal B(X;X)}\, k(\rho)\, \rho \displaystyle\int_\tau^t \exp^{3\beta(t - s)} \exp^{2\beta(s - \tau)}\, ds \nonumber \\
&\leq M_\alpha\, k(\rho)\,  \frac{\rho}{\beta}\|E_1\|_{\mathcal B(X;X)} \, \exp^{2\beta(t - \tau)},\label{2.29}
\end{align}
where 
\[
k(\rho):=\sup_{x\in B_{X_\alpha}\!(0,\rho)}\norm{G'(x)}_{\mathcal B(X_\alpha;X)}.
\]
In a similar fashion, we estimate the second integral in \eqref{eq:fixed-point-T}:
\begin{align} 
&\left\| \int_{-\infty}^t \exp^{-L_2(t - s)} E_2 G(y(s))\, ds \right\|_{X_\alpha}
\leq M_\alpha \displaystyle\int_{-\infty}^t \frac{\exp^{\beta(t - s)}}{(t - s)^\alpha} \|E_2\|_{\mathcal B(X;X)}\, k(\rho) \|y(s)\|_{X_\alpha}\, ds \nonumber\\
&\leq M_\alpha\, \|E_2\|_{{\mathcal B(X;X)}}\, k(\rho)\, \rho\, \exp^{2\beta(t - \tau)} \displaystyle\int_0^\infty \frac{\exp^{-\beta s}}{s^\alpha}\, ds.\label{2.30}
\end{align}
Combining estimates \eqref{2.29} and \eqref{2.30}, we obtain that
\begin{equation}
\begin{aligned}
\|\mathcal T(y)(t)\|_{X_\alpha}
&\leq M_\alpha \exp^{3\beta(t - \tau)} \|a\|_{X_\alpha} + M_\alpha\, k(\rho)\, \frac{\rho}{\beta}\, \|E_1\|_{{\mathcal B(X;X)}} \exp^{2\beta(t - \tau)} \\
&\quad + M_\alpha\, \|E_2\|_{\mathcal B(X;X)}\, k(\rho)\, \rho\, \exp^{2\beta(t - \tau)} \int_0^\infty \frac{\exp^{-\beta s}}{s^\alpha}\, ds \\
&\leq
\rho \exp^{2\beta(t - \tau)}
    \qquad\qquad\text{for all \( t \in (-\infty, \tau] \)},
\end{aligned}
\end{equation}
provided that \(\rho > 0\) is sufficiently small so that
\[
M_\alpha k(\rho) \left( \frac{1}{\beta}\norm{E_1}_{\mathcal B(X;X)} + \|E_2\|_{\mathcal B(X;X)} \int_0^\infty \frac{\exp^{-\beta s}}{s^\alpha} \, ds \right) \leq \frac{1}{4M_\alpha} < \frac{1}{2}
\]
and \(a \in X_1 \cap X_\alpha\) satisfies \(\|a\|_{X_\alpha} = \frac{\rho}{2M_\alpha}\). 
Thus, \(\mathcal T\) maps \(\mathcal{A}\) into itself, and it is actually a contraction on $\mathcal A$ if additionally $\rho<1$. 

By Banach fixed point theorem, the map $\mathcal T$ in \eqref{eq:fixed-point-T} has a unique fixed point, say  \(t\mapsto y^*(t)=y^*(t; \tau, a)\in \mathcal A \). From the definition of $\mathcal A$ and the choice of $a$, it follows that
\[
\|y^*(t)\|_{X_\alpha} \leq 2 M_\alpha \|a\|_{X_\alpha} \exp^{2\beta(t - \tau)}.
\]
Moreover, by \eqref{2.30},
\[
\left\| \int_{-\infty}^\tau \exp^{-L_2(\tau - s)} E_2 G(y^*(s))\, ds \right\|_{X_\alpha} \leq \frac{1}{2} \|a\|_{X_\alpha}.
\]
From the latter estimate, and \eqref{eq:fixed-point-T} at $t=\tau$ and $y=y^*$, we obtain
\[
\| y^*(\tau) \|_{X_\alpha} \geq \frac{1}{2} \| a \|_{X_\alpha}.
\]
Recall that \( y^*(\cdot; \tau, a) \) is the solution of the ODE \eqref{eq:evolution1}$_1$ for $t<\tau$. 
Now, for each \( n \in \mathbb{N} \), define \( z_n := y^*(0; n, a) \). Let \(t\in [0,t^*)\mapsto u(t):=z(t; 0, z_n) \) be the solution of \eqref{eq:evolution1} with initial condition \( z_n \), where $t^*$ is the maximal existence time. If $t^*<\infty$, then the trivial solution of \eqref{eq:evolution1} is unstable. Assume that $t^*=\infty$. Then $u$ satisfies
\[
z(t; 0, z_n) = y^*(t; n, a)
\qquad\text{ for \( t\in [0,n]\)},
\]
and the following estimates:
\[
\begin{aligned}
\sup_{t \geq 0} \| z(t; 0, z_n) \|_{X_\alpha} 
&\geq \| z(n; 0, z_n) \|_{X_\alpha} \\
&= \| y^*(n; n, a) \|_{X_\alpha} \\
&\geq \frac{1}{2} \|a\|_{X_\alpha}.
\end{aligned}
\]
On the other hand, for every $R\in \left(0,\displaystyle\frac{\norm{a}_{X_\alpha}}{4}\right)$, there exists $N\in \N$ sufficiently large so that 
\[
\| z_N \|_{X_\alpha} = \| y^*(0; N, a) \|_{X_\alpha} \leq \exp^{-2\beta N}< R.
\]
Therefore, the trivial solution of \eqref{eq:evolution1} is unstable.
\end{proof}
\end{appendices}

\bibliography{sn-bibliography}

\end{document}